\documentclass[12pt,a4paper]{amsart}
\usepackage{graphicx,latexsym,amsfonts,amsmath,amssymb,rotating,txfonts,mathrsfs,enumerate, mathtools}
\usepackage[utf8]{inputenc}
\usepackage{epic}
\usepackage{curves}
\usepackage{pdfsync}
\usepackage{ytableau}
\usepackage{tikz}
\usepackage{calrsfs}
\usepackage{enumitem,kantlipsum}
\usepackage[justification=centering]{caption}
\usepackage{subcaption}

\input xy
\xyoption{all}

\newcommand{\Spec}{{\rm Spec}}

\newcommand{\Hom}{ \,{\rm Hom} \,}

\newcommand{\im}{ \,{\rm Im} \,}

{\bf}{\rm}
\newtheorem{theorem}{Theorem}[section]
\newtheorem*{theorem*}{Theorem}
\newtheorem{proposition}[theorem]{Proposition}
\newtheorem{corollary}[theorem]{Corollary}
\newtheorem{lemma}[theorem]{Lemma}
\newtheorem{definition}[theorem]{Definition}
\newtheorem{remark}[theorem]{Remark}
\newtheorem{conjecture}[theorem]{Conjecture}
\newtheorem{example}[theorem]{Example}
{\bf}{\it}

% black board bold face
\newcommand{\RR}{{\mathbb R }}
\newcommand{\CC}{{\mathbb C }}

\newcommand{\ZZ}{{\mathbb Z }}
\newcommand{\PP}{ {\mathbb P }}
\newcommand{\QQ}{{\mathbb Q }}

\newcommand{\ff}{{\mathbf f }}

% calligraphic letters

\newcommand{\calc}{\mathcal{C}}

\newcommand{\calo}{\mathcal{O}}

\newcommand{\calp}{\mathcal{P}}

%mathfrak letters

\newcommand{\reg}{\mathrm{reg}}

\newcommand{\mon}{\mathrm{Mon}}

\newcommand{\Span}{\mathrm{Span}}

\newcommand{\boxx}{\mathrm{box}}

\newcommand{\symdot}{\mathrm{Sym}^{\le k}\CC^n}

\newcommand{\grass}{\mathrm{Grass}}

\newcommand{\flag}{\mathrm{Flag}}
\newcommand{\diff}{\mathrm{Diff}}

\newcommand{\bw}{\mathbf{w}}

\newcommand{\bff}{\mathbf{f}}
\newcommand{\bi}{\mathbf{i}}

\newcommand{\bv}{\mathbf{v}}

\newcommand{\bk}{\mathbf{k}}

\newcommand{\bV}{\mathbf{V}}
\newcommand{\jetk}[2]{J_{k}({#1},{#2})}

\newcommand{\jetreg}[2]{J_{k}^{\mathrm{reg}}({#1},{#2})}

\newcommand{\jetnondeg}[2]{J_{k}^{\mathrm{nondeg}}({#1},{#2})}

\newcommand{\GL}{\mathrm{GL}}

\newcommand{\sym}{\mathrm{Sym}}

\newcommand{\Hilb}{\mathrm{Hilb}}
\newcommand{\GHilb}{\mathrm{GHilb}}
\newcommand{\CHilb}{\mathrm{CHilb}}
\newcommand{\NHilb}{\mathrm{NHilb}}
\newcommand{\THilb}{\mathrm{THilb}}

\newcommand{\Quot}{\mathrm{Quot}}
\newcommand{\CQuot}{\mathrm{CQuot}}

 % Defect
 % Permutations

\newcommand\Curv{\text{Curv}}

\newcommand\Soc{\mathrm{soc}}
\newcommand{\Alg}{ \mathrm {Alg} }
 %Blow up
 
\newcommand\Dim{\mathrm{dim}}

\newcommand\RL{\mathrm {RL}}

%***************************

%Replace greek letters by their roman equivalents with \
%Slightly nonstandard:  theta is \t, tau is \ta, no omicron
\def\a{\alpha}
\def\b{\beta}
\def\g{\gamma}
\def\d{\delta}

\def\s{\sigma}

\def\vp{\varphi}

\setlength{\textwidth}{6.5in}
\setlength{\textheight}{8.3in}

\setlength{\evensidemargin}{0.1in} \setlength{\oddsidemargin}{0.2in}

%\newcommand{\renorm}{{ \setcounter{equation}{0} }}

% Jonas extension of preamble
\usepackage{todonotes, tabularray,tikz-cd}
\newcommand\AAA{\mathbb{A}} %-- \aa is of course å
 % Fibered Jet space over Flag

\newcommand{\calT}{\mathcal{T}}

\newcommand{\calP}{\mathcal{P}}
 % Permutations
\newcommand\Symm{\mathrm{Sym}}

 % underline Parts of partition; underlying set

\title{Fixed point distribution on Hilbert scheme of points} 

\author{Gergely B\'erczi}
\address{Department of Mathematics, Aarhus University}
\email{gergely.berczi@math.au.dk}
\author{Jonas M. Svendsen}
\address{Department of Mathematics, Aarhus University}
\email{svendsen@math.au.dk}
%\thanks{The support of }
%\date{}

\begin{document}

\begin{abstract}
Let $\mathbf{k}$ be a closed field of characteristic zero. We prove that all monomial ideals sit in the curvilinear component of the Hilbert scheme of points of the affine space $\mathbb{A}_{\mathbf{k}}^n$, answering a long-standing question about the distribution of torus-fixed points among punctual components. This result confirms that the punctual Hilbert scheme is connected, a property previously established only for the full Hilbert scheme in 1966 by Hartshorne.  
\end{abstract}

\maketitle

\section{Introduction}\label{sec:intro}

Let $\bk$ be an algebraically closed field of characteristic zero, and let $\Hilb^k(\AAA_{\bk}^n)$ denote the Hilbert scheme of $k$ points on the affine space $\AAA_{\bk}^n$ parametrising subschemes of length $k$ of $\AAA_{\bk}^n$, alternatively, ideals of colength $k$ in $\bk[x_1,\ldots, x_n]$. 
We will set $\bk=\CC$ for the rest of this paper, and refer to \S \ref{sec:finalremarks} for comments on the chosen field. The punctual Hilbert scheme $\Hilb^k_0(\CC^n)$ is formed by subschemes supported at the origin. This punctual part contains all monomial ideals, which are the torus fixed points with respect to the natural action of the maximal torus of $\GL(n)$ on $\Hilb^k(\CC^n)$. This paper addresses the following long-standing question.

\noindent \textbf{Problem \cite{AIM,joachimsurvey}} What is the distribution of torus fixed points among the components of $\Hilb^k_0(\CC^n)$? 

The Hilbert scheme of points on $\CC^2$ is a nonsingular variety of dimension $2n$ (Fogarty \cite{fogarty}), and its punctual part is irreducible due to a theorem of Briancon \cite{briancon}, hence all fixed points sit in this unique component. For $n>2$ however, $\Hilb^k(\CC^n)$ presents a mixture of pathological and unknown behaviour, which Vakil characterised as a geometric example of Murphy's laws \cite{vakil}. For $k$ sufficiently large, neither $\Hilb^k(\CC^n)$ nor its punctual part $\Hilb^k_0(\CC^n)$ is irreducible, and the description of their components, singularities and deformation theory is out of reach at the moment \cite{joachimsurvey}. The importance of this moduli space in geometry and physics is undisputed, and its intersection theory plays crucial role in several classical problems of enumerative geometry and mathematical physics \cite{nakajima}. Nevertheless, certain fundamental characteristics continue to elude our understanding, see \cite{joachimsurvey} for a recent survey of open problems. 

There is a distinguished component of $\Hilb^k_0(\CC^n)$, that seems to play a central role in the topology and geometry of the Hilbert scheme of points: the curvilinear locus is the set of those length $k$ subschemes on $\CC^n$ which arise when $k$ points come together at the origin along a smooth curve-jet, and the curvilinear component is the closure of this locus: 
\[\CHilb^k_0(\CC^n)=\overline{\{I \subset \mathfrak{m}:\mathfrak{m}/I \simeq t\CC[t]/t^{k}\}}\]
where $\mathfrak{m}=(x_1,\ldots, x_n)$ is the maximal ideal at the origin. This is a small component of dimension $(n-1)(k-1)$, and in fact, conjecturally, it is the lowest dimensional punctual component on threefolds \cite{joachimsurvey,satriano}. In contrast, we know that there exist big, high-dimensional Iarrobino-type punctual components, typically isomorphic to Grassmannians of subspaces of $\mathfrak{m}^s/\mathfrak{m}^{s+1}$ containing many torus fixed-points. Given this perspective, our main theorem is rather unexpected. 

\begin{theorem}\label{main1} Every monomial ideal in $\Hilb^k(\CC^n)$ is contained in the curvilinear component $\CHilb^k_0(\CC^n)$.
\end{theorem}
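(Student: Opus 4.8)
The plan is to show that an arbitrary monomial ideal $I\subset\CC[x_1,\ldots,x_n]$ of colength $k$ can be connected, inside $\Hilb^k(\CC^n)$, to a curvilinear scheme. The natural strategy is to produce an explicit flat family $\{I_s\}_{s\in\AAA^1}$ with $I_0=I$ and $I_s$ curvilinear for $s\neq 0$ — equivalently, a one-parameter degeneration of a fat point on a smooth curve-jet to the monomial scheme. I would look for this family inside a torus- or $\CC^*$-equivariant setting: the monomial ideal $I$ is the $\GL(n)$-fixed point that is the "initial ideal" of a generic curvilinear ideal under a suitable weight, so the idea is to run the Gröbner degeneration backwards. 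Concretely, given the staircase of $I$, one writes down a curve-jet $t\mapsto (\gamma_1(t),\ldots,\gamma_n(t))$ with enough free parameters (the coefficients of the $\gamma_i$), forms the colength-$k$ ideal $I_\gamma$ cut out by the first $k$ jets of this curve, and then chooses a one-parameter subgroup $\lambda(s)\subset (\CC^*)^n$ (or a more general grading) so that $\lim_{s\to 0}\lambda(s)\cdot I_\gamma = I$. The existence of such a limit in the Hilbert scheme is automatic by properness; the content is that the \emph{initial ideal} is exactly the prescribed monomial ideal.

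The key steps, in order, are: (i) encode the monomial ideal $I$ by its standard monomial basis $B=\{x^{\baa}: \baa\notin I\}$, $|B|=k$, and its minimal generators; (ii) construct a parametrised curvilinear subscheme whose defining ideal degenerates to $I$ — the most robust choice is to take the curve-jet through the origin whose $k$-jet hits, in successive orders, the monomials of $B$ in the order dictated by a term order refining the grading, so that the "leading terms" of the curvilinear relations are precisely the generators of $I$; (iii) verify flatness of the family, which by the standard colength/Hilbert-polynomial criterion reduces to checking that the special fibre and general fibre have the same colength $k$ — here one uses that a Gröbner/initial degeneration preserves colength; (iv) conclude that the general member is curvilinear by construction and the special member is $I$, so $I\in\overline{\{\text{curvilinear ideals}\}}=\CHilb^k_0(\CC^n)$. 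An induction on $n$ or on $k$ may be needed to set up the curve-jet: one can try to peel off one variable or one "box" of the staircase at a time, reducing to a monomial ideal with a simpler staircase whose curvilinearity is already known.

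The main obstacle I expect is step (ii): exhibiting a \emph{single} smooth curve-jet (with its finitely many jet coefficients as free parameters) whose associated colength-$k$ ideal actually degenerates to the given monomial ideal. Not every monomial ideal is visibly the initial ideal of a curvilinear scheme — the curvilinear locus is small (conjecturally the smallest punctual component on threefolds), so there is no a priori reason that a fat point on a curve can "see" an arbitrary staircase. The crux is therefore a combinatorial lemma matching staircases of monomial ideals with $k$-jets of curves: one must show that for any staircase one can choose the orders of vanishing of $\gamma_1(t),\ldots,\gamma_n(t)$ (and the higher coefficients) so that the truncation-at-order-$k$ ideal has exactly that staircase as its initial ideal. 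I would try to prove this by an explicit greedy/inductive construction along a linear extension of the poset of boxes of the staircase, assigning to the $j$-th box a monomial $t$-order and reading off the constraints on the jet coefficients; the remaining check is that these constraints are simultaneously satisfiable (a genericity/open-density statement) and that no unwanted relations of lower order appear, which is where a careful term-order bookkeeping argument will be required.
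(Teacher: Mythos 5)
There is a genuine gap, and it sits exactly where you predicted: step (ii). Your proposal reduces the theorem to the claim that \emph{every} monomial ideal $I$ of colength $k$ is the initial ideal, under some one-parameter subgroup of the torus, of the ideal of a single curvilinear subscheme of $\CC^n$. That claim is never proved in your sketch, and it is precisely the step the paper does \emph{not} know how to do inside the same ambient space; the authors stress that it is ``absolutely crucial to increase the dimension''. Concretely, in the test curve model a curvilinear point is $\phi(\gamma)\in\grass_k(\symdot)$, and any torus limit of $\phi(\gamma)$ is supported on the nonzero Pl\"ucker coordinates $e_{\pi_1}\wedge\cdots\wedge e_{\pi_k}$ of $\phi(\gamma)$; over the flag attached to $\gamma$ these coordinates are supported only on admissible sequences ($s(\pi_i)\le i$), and Lemma \ref{lemma:crucial} together with the remark following it shows that admissibility of the staircase is necessary but \emph{not} sufficient for membership in the fibre $\widehat{\CHilb}^{k+1}_\ff$. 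So your ``greedy assignment of vanishing orders plus term-order bookkeeping'' runs into a real obstruction: what is actually provable by a one-parameter-subgroup degeneration of a single jet is only Theorem \ref{main1toric}, namely that \emph{T-monomial} ideals (complete toric sequences, $s(\pi_i)=i$) arise as such limits, and even there the solvability of the system of weight inequalities hinges on completeness of the sequence, not on genericity of jet coefficients.

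The paper's route is structurally different from yours after that point: (i) it proves the degeneration statement only for T-monomial ideals, via an explicit rational-weight one-parameter subgroup and an inductive argument on systems of linear inequalities (Proposition \ref{toricInductionStep}); (ii) it associates to an arbitrary monomial ideal $\mon\subset\CC[x_1,\ldots,x_n]$ a socle (T-)extension $\mon^+$ living in $\Hilb^{K}(\CC^{N})$ with $K\ge k$, $N\ge n$, obtained combinatorially by completing the staircase to a cuboid and adjoining socle generators; and (iii) it proves a descent theorem (Theorem \ref{thm:descent}, via Proposition \ref{deformTrivialExtension}) saying that if the socle extension lies in $\CHilb^{K}(\CC^{N})$ then the original algebra lies in $\CHilb^{k}(\CC^{n})$. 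Your flatness and limit-existence remarks (steps (iii)--(iv)) are fine as far as they go, but without a proof of step (ii) — which would amount to a statement strictly stronger than anything established in the paper, namely that every monomial ideal is an initial degeneration of a curvilinear ideal in the \emph{same} $\CC^n$ — the argument does not constitute a proof. If you want to salvage the Gr\"obner-degeneration viewpoint, you would need either to prove that stronger realizability statement (no mechanism for which is offered), or to incorporate something playing the role of the paper's dimension increase and descent.
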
 

There are only very few general results on the topology of $\Hilb^k(\CC^n)$. Hartshorne \cite{hartshorne} proved that  $\Hilb^k(\CC^n)$ is connected and Horrocks \cite{horrocks} showed that $\Hilb^k(\PP^n)$ is simply connected for all $n,k$. Since all irreducible punctual components necessarily contain torus fixed points, Theorem \ref{main1} immediately implies the following fundamental result.

\begin{theorem}
 The punctual Hilbert scheme $\Hilb^k_0(\CC^n)$ is connected for all $k,n$.
\end{theorem}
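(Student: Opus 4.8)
The plan is to derive this statement formally from Theorem~\ref{main1}, using three standard inputs: $\Hilb^k_0(\CC^n)$ is proper, its torus fixed points are precisely the monomial ideals, and the curvilinear locus is irreducible.

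First I would establish that every irreducible component of $\Hilb^k_0(\CC^n)$ contains a monomial ideal. To this end, note that if $I\subset R=\CC[x_1,\dots,x_n]$ has colength $k$ and $\sqrt I=\frakm$, then $R/I$ is local Artinian of length $k$, so $\frakm^k\subseteq I$; hence $\Hilb^k_0(\CC^n)$ is a closed subscheme of the Grassmannian of codimension-$k$ subspaces of $R/\frakm^k$, in particular complete. The maximal torus $T=(\CC^*)^n\subseteq\GL(n)$ acts on it, and being connected it fixes each of the finitely many irreducible components setwise; so by Borel's fixed point theorem each component, being complete and $T$-stable, contains a $T$-fixed point. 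Since a $T$-stable ideal of $R$ with the standard diagonal action is spanned by its weight vectors, such a fixed point is a monomial ideal.

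Next I would record that $\CHilb^k_0(\CC^n)$ is irreducible. Indeed, any curvilinear subscheme of length $k$ can be brought, by a coordinate change of $(\CC^n,0)$, to the standard form $(x_2,\dots,x_n,x_1^k)$, so the curvilinear locus is a single orbit of the connected group of jets of coordinate changes; hence it is irreducible, and therefore so is its closure $\CHilb^k_0(\CC^n)$.

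Finally I would assemble the argument. Let $Z_1,\dots,Z_r$ be the irreducible components of $\Hilb^k_0(\CC^n)$. Each $Z_i$ contains a monomial ideal by the first step, and by Theorem~\ref{main1} that ideal lies in $\CHilb^k_0(\CC^n)$ as well, so $Z_i\cap\CHilb^k_0(\CC^n)\neq\emptyset$. Therefore each $Z_i\cup\CHilb^k_0(\CC^n)$ is connected, and $\Hilb^k_0(\CC^n)=\bigcup_{i=1}^r\big(Z_i\cup\CHilb^k_0(\CC^n)\big)$ is a union of connected sets all meeting the connected set $\CHilb^k_0(\CC^n)$, hence connected. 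The substantive content is entirely in Theorem~\ref{main1}; the only points requiring a little care here are the properness of $\Hilb^k_0(\CC^n)$ and the identification of its torus fixed points with monomial ideals, both of which are classical, so I do not expect a genuine obstacle in this deduction.
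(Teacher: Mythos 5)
Your proposal is correct and follows essentially the same route as the paper, which deduces connectedness in one line from the fact that every irreducible punctual component contains a torus fixed point (a monomial ideal) and hence, by Theorem~\ref{main1}, meets the irreducible curvilinear component. You have merely filled in the standard supporting facts (properness of $\Hilb^k_0(\CC^n)$, existence of torus fixed points in each component, irreducibility of $\CHilb^k_0(\CC^n)$), all of which are accurate.
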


Theorem \ref{main1} aligns with recent developments concerning the cohomology and intersection theory of the Hilbert scheme of points.

In \cite{totaro1} Hoyois, Jelisiejew, Nardin, Totaro and Yakerson study $\Hilb^k(\AAA^\infty)$ from a (stable) $\AAA^1$-homotopical viewpoint and show that $\Hilb^k(\AAA^\infty)$ is $\AAA^1$-equivalent to the Porteous locus 
\[\mathrm{Por}^k(\AAA^\infty)=\{I \subset \CC[x_1,\ldots, x_n]: \mathfrak{m}^2 \subset I \subset \mathfrak{m}\} \simeq \grass_k(\AAA^\infty)\]
in $\CHilb^k(\AAA^\infty)$ consisting of all ideals of colength $k$ sitting between the maximal ideal $\mathfrak m=(x_1,\ldots, x_n)$ and $\mathfrak{m}^2$, which is, in fact, the Grassmannian of $(k-1)$-planes in $\AAA^\infty$. 

In \cite{bercziG&T,berczitau2,berczitau3,berczitau4} the first author develops a new approach to tautological intersection theory of $\Hilb^k(\CC^n)$. The main insight is that tautological integrals on (the smoothable component of the) Hilbert scheme can be reduced to integrals over its punctual  curvilinear component $\CHilb^k_0(\CC^n)$. Then the results of \cite{bercziG&T} can be used where we develop an iterated residue formula for integrals over $\CHilb^k_0(\CC^n)$. We prove a residue vanishing theorem which shows that the fixed point contribution only comes from the Porteous locus defined above.   

Theorem \ref{main1} combined with the Atiyah-Bott localisation principle is yet another result that strongly supports our view, although it does not imply in any way, that the curvilinear component $\CHilb^k_0(\CC^n)$ must contain all topological information about the smoothable component of $\Hilb^k(\CC^n)$ in general. 

Our proof of Theorem \ref{main1} has grown out from \cite{bsz,bkcoh} and \cite{berczithom}, where a new non-reductive GIT model for the curvilinear Hilbert scheme was introduced, and toric resolutions of the Hilbert scheme were studied. In its final version, presented in this paper, the proof can be formulated without using the non-reductive GIT machinery, and use only the test curve model \cite{bsz,bercziG&T} of curvilinear Hilbert schemes. The crucial steps of the proof can be summarized as follows. (i) Using the model we prove that a special class of monomial ideals, which we call T-monomial ideals, sit in $\CHilb^k_0(\CC^n)$ (ii) We associate to any monomial ideal $I \in \Hilb^k_0(\CC^n)$ a T-monomial ideal $I^+ \in \CHilb^{K}_0(\CC^{N})$ for some $K\ge k$ and $N\ge n$ which we call the T-extension. (iii) Finally, we prove a descent property: if the $T$-extension $I^+ \in \CHilb^{K}_0(\CC^{N})$ sits in the curvilinear component, then $I \in \CHilb^{k}_0(\CC^n)$ also sits there. We emphasize that in this argument the pivotal steps involve both modifying the algebra using new coordinate directions (we call these socle extensions, which correspond to collide a point on $\CC^n$ from a new direction), and increasing the dimension of the ambient space. Our previous attempts and several years' work using the test curve model in a fixed dimension remained unsuccessful.

Finally, our technique is applicable to the study of fixed-point distribution on Quot schemes, which are higher rank analogs of Hilbert scheme of points \cite{joachimquot}. With the shorthand notation $S=\CC[x_1,\ldots, x_n]$, the Quot scheme parametrises $k$-dimensional quotients of the free $S$-module of rank $r$:  
\[\Quot_r^k(\CC^n)=\{J \subset S^{\oplus r}: \dim(S^{\oplus r}/J)=k\}.\]
This is naturally endowed with a $T^r \times \GL(n)$-action induced from the $\GL(n)$ action on $\CC^n$ and the rescaling action on the $r$ components. The fixed points for the action of the torus $T^r \times T^n$ correspond to monomial quotients of the form $S/\mon_1 \oplus \ldots \oplus S/\mon_r$ where $\mon_1,\ldots, \mon_r$ are monomial ideals of $S$. The curvilinear component is the closure of the curvilinear locus:
\[\CQuot^k_r(\CC^n)=\overline{\{J \subset S^{\oplus r}: S^{\oplus r}/J \simeq \CC[t]/t^k\}}.\]
In \S \ref{sec:finalremarks} we prove
\begin{theorem}
All torus fixed points of $\Quot_r^k(\CC^n)$ sit in the curvilinear component $\CQuot^k_r(\CC^n)$. Hence the punctual Quot scheme is connected. 
\end{theorem}

\textbf{Acknowledgments}. We are indebted to Joachim Jelisiejew and Damian Brotbek for useful discussions.

%In fact, we prove slightly stronger involving nested Hilbert schemes. To state this, let 
%\[\Hilb^\bk_0(\CC^n)=\overline{\{\mathfrak{m}=I_1 \supset I_2 \supset \ldots \supset I_{k+1}=0:\dim_\CC I_i/I_{i+1}=1 \}}\]
%be the punctual full nested Hilbert scheme supported at the origin. The curvilinear component of this is 
%\[\CHilb^\bk_0(\CC^n)=\overline{\{\mathfrak{m}=I_1 \supset I_2 \supset \ldots \supset I_{k+1}=0:\mathfrak{m}/I_{i} \simeq t\CC[t]/t^{i+1}\}}\]

%\begin{theorem}\label{main2} Every monomial ideal in $I_\l \in \Hilb^k(\CC^n)$ can be extended to a nested scheme
%\[(\mathfrak{m}=I_1 \supset I_2 \supset \ldots \supset I_{k+1}=0) \in \CHilb^\bk_0(\CC^n).\]
%\end{theorem} 

%The main tool to handle Hilbert schemes is a model for the curvilinear component which combines the model of Berczi-Szenes \cite{bsz} with a series of blow-ups ruled by non-reductive GIT \cite{bkcoh, bkprojcompl}....We call this the non-reductive GIT model, and this was used in \cite{berczithom} to develop new formulas for Thom polynomials of monosingularities, and in \cite{berczitau1, berczitau2, jonasPhD} to study tautological integrals over Hilbert scheme of points in higher dimensions...I need to finish this.

\section{Components of the Hilbert scheme of points}\label{sec:curvilinear}
Let $X$ be a nonsingular irreducible complex variety of dimension $n$ and let  
\[\Hilb^k(X)=\{\xi \subset X:\dim(\xi)=0,\mathrm{length}(\xi)=\dim H^0(\xi,\calo_\xi)=k\}\]
denote the Hilbert scheme of $k$ points on $X$ parametrizing all length-$k$ subschemes of $X$.
Despite intense study in the last 40 years, not much is known about the geometry and topology of $\Hilb^k(X)$ when $\dim(X)>2$. This section tries to hint the pathological behaviour and complexity this moduli space. We give a short overview of the structure of $\Hilb^k(X)$ referring to the Bellis Hilbertis in Figure \ref{figure:bellishilbertis}---a beautiful illustration introduced by Jelisiejew \cite{joachimthesis}---which summarizes its sophisticated structure as follows. For a comprehensive collection of open questions about the geometry of Hilbert scheme of points we refer to \cite{joachimsurvey}.

\begin{figure}[tb]
%\begin{figure}[ht!]
\centering
\includegraphics[width=80mm]{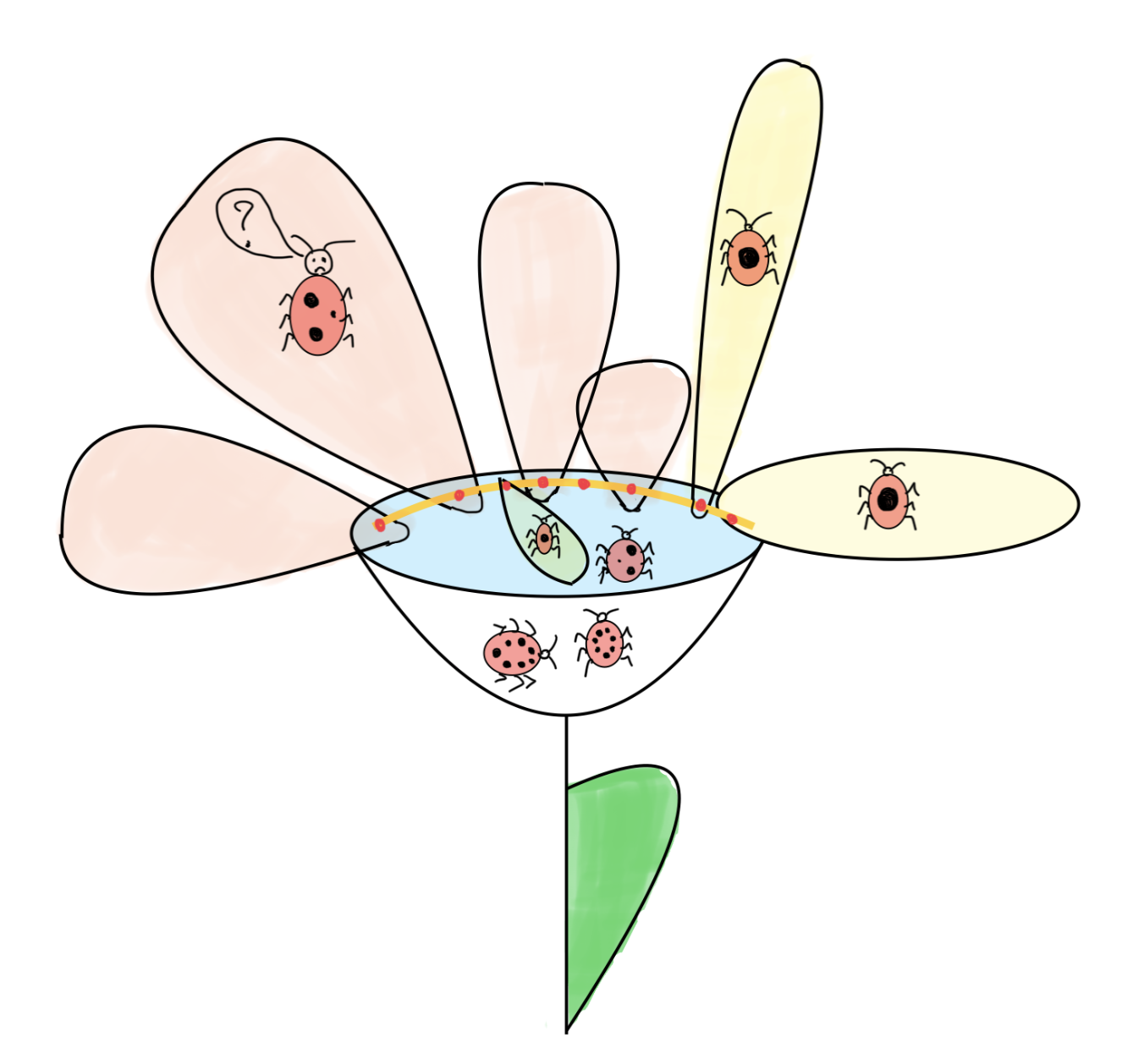}
\caption{The refined Bellis Hilbertis. \\ The stem is the smoothable component where a generic ladybird has $k$ dots. These dots collide on the blue boundary. The petals stand for the non-smoothable components of the Hilbert scheme. The yellow petals are the components of the punctual Hilbert scheme, where ladybirds have only one fat dot. Some of them are components of the Hilbert scheme too (like the Iarrobino-type components), others are smoothable (sitting in the blue stem boundary). The red petals are not punctual, and we don't know too much about them. The petals have different sizes: some of them are much bigger than the stem itself. The dark yellow line in the blue stem stands for the curvilinear component: it is a very small component intersecting all other punctual and non-punctual components, containing all torus fixed points indicated by red dots.} 
\label{figure:bellishilbertis}
\end{figure}

\begin{enumerate}[wide, labelwidth=!, labelindent=0pt]
    \item The main (also called smoothable or geometric) component of $\Hilb^k(X)$ 
    \[\GHilb^k(X)=\overline{\{p_1 \sqcup \ldots \sqcup p_i: p_i \neq p_j\}}\] 
    is the closure of the open set consisting of reduced subschemes, whose support consists of $k$ different points on $X$. Its dimension is $k\dim(X)$. In \cite{ekedahl} the good component is constructed as a certain blow-up along an ideal of $S^kX$, providing an extension of the classical result of Haiman \cite{haiman} on surfaces. 

    In Figure \ref{figure:bellishilbertis} the main component is the stem, and for small $k$ it is the only component, there are no petals. Generic points on the main component (ladybirds) correspond to $k$-tuples of points on $X$. All subschemes in $\GHilb^k(X)$ are limits of reduced subschemes, so they are smoothable.
    \item For $p\in X$ let 
\[\Hilb^k_p(X)=\{ \xi \in \Hilb^k(X): \mathrm{supp}(\xi)=p\}\]
denote the punctual Hilbert scheme consisting of subschemes supported at $p$. If $\rho: \Hilb^k(X) \to S^kX$, $\xi \mapsto \sum_{p\in X}\mathrm{length}(\calo_{\xi,p})p$ denotes the Hilbert-Chow morphism then $\Hilb^k_p(X)=\rho^{-1}(kp)$. The punctual Hilbert scheme is irreducible on surfaces (due to Briancon \cite{briancon}), but it is reducible when $\dim(X)>2$ and the number of points $k$ is sufficiently large, see \cite{iarrobino2,iarrobino}.
    \item Every other component (petal) intersects the smoothable one (see \cite{Ree95}), however, description of these intersections is currently out of reach.
    The first non-smoothable components were constructed by Iarrobino \cite{iarrobino2} for $n=3$ and $k\ge 8$. He called these \textit{very compressed algebras}: they are punctual (supported at one point) with dimension typically exceeding $k\dim(X)$. Very compressed algebras are typically constructed as  
\[\mathrm{VC}^{[k]}_p=\{I:\mathfrak{m}^s \subseteq I \subseteq \mathfrak{m}^{s+1},\dim_\CC \mathfrak{m}/I=k-1\}\]
where $\mathfrak{m} \subset \calo_p(X)$ is the maximal ideal and $s$ is the unique positive integer satisfying $\dim (\mathfrak{m}/\mathfrak{m}^s) <k <\dim (\mathfrak{m}/\mathfrak{m}^{s+1})$. In fact, a subspace of $V=\mathfrak{m}^s/\mathfrak{m}^{s+1}$ of dimension $k-\dim (\mathfrak{m}/\mathfrak{m}^s)-1$ determines the ideal uniquely and hence
\[\mathrm{VC}^{[k]}_p=\grass_{k-\dim (\mathfrak{m}/\mathfrak{m}^s)-1}(V).\]
In the other direction, Satriano and Staal \cite{satriano} recently found small punctual components, whose dimensions are smaller than the curvilinear component. These Satriano-Staal components are not smoothable. 
The existence of non-reduced components was recently proved by Jelisiejew \cite{joachiminventiones}.  
    \item For surfaces ($\Dim \,X=2$) the punctual geometric set $\GHilb^k_p(X)=\Hilb_p^{k}(X) \cap \GHilb^k(X)$
is irreducible and equal to the punctual Hilbert scheme $\Hilb_p^{k}(X)$ (see \cite{briancon}). However, for $n\ge 3$ $\GHilb^k_p(X)$ is typically not irreducible; its components are called smoothable components of $\Hilb_p^{k}(X)$. Again, description of the smoothable components is a hard problem and is unknown in general. The Iarrobino-type very compressed punctual components are clearly not smoothable: their dimension can be higher than $k\dim(X)$. Recently big, divisorial smoothable punctual components were found by Cartwright, Erman, Velasco, Viray \cite{Cartwright2009} (when $n\ge 4, k=8$) and Bertone, Cioffi, Roggero \cite{bertone} (when $n=7,k=16$).
Regarding small smoothable components, it is a conjecture that the curvilinear components (defined below) is the smallest punctual smoothable component. 
\item A subscheme $\xi \in \Hilb_p^k(X)$ is called curvilinear if $\xi$ is contained in some smooth curve $\calc \subset X$, or equivalently, $\calo_\xi \simeq \CC[z]/z^{k}$.
The curvilinear locus at $p\in X$ is the set of curvilinear subschemes supported at $p$: 
\begin{equation}\nonumber 
\mathrm{Curv}^k_p(X)=\{\xi \in \Hilb^k_p(X): \xi \subset \mathcal{C}_p \text{ for some smooth curve } \mathcal{C} \subset X\}
=\{\xi:\calo_\xi \simeq \CC[z]/z^{k}\}
\end{equation}
Note that any curvilinear subscheme contains only one subscheme for any given smaller length and any small deformation of a curvilinear subscheme is again locally curvilinear.

For surfaces ($\Dim \,X=2$) $\mathrm{Curv}^k_p(X)$ is an irreducible quasi-projective variety of dimension $k-1$ which is an open dense subset in $\Hilb^k_p(X)$ and therefore its closure is the full punctual Hilbert scheme at $p$. However, if $n\ge 3$, the punctual Hilbert scheme $\Hilb^k_p(X)$ is not necessarily irreducible or reduced, and it has smoothable and non-smoothable components. The closure of the curvilinear locus is one of its smoothable irreducible components as the following lemma shows. 
\begin{lemma} $\CHilb^k_p(X)=\overline{\mathrm{Curv}^k_p(X)}$ is an irreducible component of the punctual Hilbert scheme $\Hilb^k_p(X)$ of dimension $(n-1)(k-1)$; we call this the curvilinear component. 
\end{lemma}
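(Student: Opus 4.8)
The plan is to show that $\mathrm{Curv}^k_p(X)$ is a nonempty, irreducible, \emph{open} subset of $\Hilb^k_p(X)$ of dimension $(n-1)(k-1)$; the Lemma then drops out of the elementary topological fact that the closure of a nonempty irreducible open subset $U$ of a scheme $Y$ is an irreducible component. Indeed, if $Z\subseteq Y$ is irreducible and closed with $\overline U\subseteq Z$, then $U=U\cap Z$ is a nonempty open subset of the irreducible space $Z$, so $Z$ equals the closure of $U$ inside $Z$, namely $\overline U\cap Z=\overline U$; thus $\overline U$ is maximal among irreducible closed subsets of $Y$, and clearly $\dim\overline U=\dim U$.

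\emph{Irreducibility and dimension via the jet model.} For these two points I would parametrise the curvilinear locus by curve jets. Fix local coordinates identifying $(X,p)$ with $(\CC^n,0)$ and let $R_k$ be the variety of regular $k$-jets of curves at $p$, i.e. tuples $\gamma(z)=(\gamma_1(z),\dots,\gamma_n(z))$ with $\gamma_i\in z\,\CC[z]/(z^k)$ and $\gamma'(0)\neq 0$. Spelling out the coefficients exhibits $R_k$ as a Zariski-open subset of the affine space $(\CC^n)^{k-1}$, hence irreducible of dimension $n(k-1)$. Sending $\gamma$ to the scheme-theoretic image of $\Spec\CC[z]/(z^k)\xrightarrow{\ \gamma\ }X$, equivalently to the colength-$k$ ideal $\{f\in\calo_{X,p}:f\circ\gamma\equiv 0\bmod z^k\}$, defines a morphism $\pi\colon R_k\to\Hilb^k_p(X)$, via the universal property of $\Hilb^k$ applied to the tautological $R_k$-flat family of length-$k$ quotients of $\calo_{R_k\times X}$. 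Its image is exactly $\mathrm{Curv}^k_p(X)$: an isomorphism $\calo_\xi\simeq\CC[z]/(z^k)$ turns $\calo_{X,p}\to\calo_\xi$ into a surjection $x_i\mapsto\gamma_i(z)$, which forces some $\gamma_i$ to have nonzero linear term, i.e. $\gamma\in R_k$. Hence $\mathrm{Curv}^k_p(X)$, and therefore $\overline{\mathrm{Curv}^k_p(X)}=\CHilb^k_p(X)$, is irreducible. Two jets in $R_k$ have the same image subscheme exactly when they differ by a reparametrisation $z\mapsto c_1 z+\dots+c_{k-1}z^{k-1}$ with $c_1\neq 0$; these form the group $G_k=\mathrm{Aut}_\CC(\CC[z]/(z^k))$, of dimension $k-1$, acting on $R_k$ by $\gamma\mapsto\gamma\circ c$, and comparing coefficients shows this action is free on $R_k$ (from $\gamma\circ c=\gamma$ and $\gamma'(0)\neq 0$ one gets $c(z)=z$). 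So every fibre of $\pi$ is a single $G_k$-orbit, of pure dimension $k-1$, and $\dim\mathrm{Curv}^k_p(X)=n(k-1)-(k-1)=(n-1)(k-1)$.

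\emph{Openness.} It remains to check that $\mathrm{Curv}^k_p(X)$ is open in $\Hilb^k_p(X)$. For a length-$k$ scheme supported at the single point $p$, being curvilinear is equivalent to the embedding dimension $\dim_\CC\mathfrak m_\xi/\mathfrak m_\xi^2$ being at most $1$, and the locus in $\Hilb^k_p(X)$ on which the fibre $\calo_\xi$ of the universal family has embedding dimension $\le 1$ is open, by semicontinuity of the embedding dimension in a finite flat family. (Equivalently, this openness is precisely the statement recorded just before the Lemma that a small deformation of a curvilinear subscheme is again locally curvilinear — and being supported at one point, it is then curvilinear.) Finally $\mathrm{Curv}^k_p(X)$ is nonempty: in local coordinates, $(x_2,\dots,x_n,x_1^k)$ is a curvilinear ideal.

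The step that needs genuine care rather than bookkeeping is this last openness in the \emph{punctual} Hilbert scheme: one must exclude that an arbitrarily small deformation inside $\Hilb^k_p(X)$ raises the embedding dimension of $\calo_\xi$. The semicontinuity argument settles it, and it is exactly this point that promotes $\overline{\mathrm{Curv}^k_p(X)}$ from an irreducible closed subset to an honest \emph{component} of the generally reducible, badly singular scheme $\Hilb^k_p(X)$.
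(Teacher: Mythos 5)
Your proposal is correct and takes essentially the same route as the paper: openness of the curvilinear locus inside $\Hilb^k_p(X)$ via a closed/semicontinuous degeneracy condition (the paper phrases it as "$\xi$ non-curvilinear $\iff \mathfrak{m}^{k-1}\subseteq I_\xi$", you use upper semicontinuity of the embedding dimension, which for length-$k$ punctual schemes is an equivalent criterion), combined with irreducibility and the dimension count $(n-1)(k-1)$ coming from parametrising curvilinear subschemes by regular curve jets modulo reparametrisation, which is precisely the test-curve quotient $J^{\mathrm{reg}}_{k-1}(1,n)/\mathrm{Diff}_{k-1}(1)$ that the paper defers to the next subsection. You merely spell out the details the paper leaves implicit (flat family over the jet space, freeness of the reparametrisation action, fibres being single orbits, nonemptiness), so there is no substantive difference in approach.
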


\begin{proof}
To study the punctual Hilbert scheme we can take $X=\CC^n$ and $p=\{0\}$ the origin. Note that $\xi \in \Hilb^{k}_0(\CC^n)$ is not curvilinear if and only if $\calo_\xi$ does not contain elements of degree $k-1$, that is, after fixing some local coordinates $x_1,\ldots, x_n$ of $\CC^n$ at the origin we have
\[\calo_\xi \simeq \CC[x_1,\ldots, x_n]/I \text{ for some } I\supseteq (x_1,\ldots, x_n)^{k-1}.\]
This is a closed condition and therefore curvilinear subschemes can't be approximated by non-curvilinear subschemes in $\Hilb^{k}_0(\CC^n)$. The dimension will follow from the description of it as a non-reductive quotient in the next subsection.  
\end{proof}
\end{enumerate}

\begin{remark}\label{remark:embed}
Recall that the defining ideal $I_\xi$ of any subscheme $\xi \in \Hilb^{k+1}_0(\CC^n)$ is a codimension $k$ subspace in the maximal ideal $\mathfrak{m}=(x_1,\ldots, x_n)$. The dual of this is a $k$-dimensional subspace $S_\xi$ in $\mathfrak{m}^*\simeq \symdot$ giving us a natural embedding $\varphi: \Hilb^{k+1}_p(X)=\Hilb^{k+1}_0(\CC^n) \hookrightarrow \grass_k(\symdot)$. The test curve model, which we introduce in the next section, provides explicit  parametrisation of this embedding using an algebraic model coming from global singularity theory.  
\end{remark}

\section{The test curve model of curvilinear Hilbert schemes}

In this section we recall an algebraic model for the curvilinear Hilbert scheme. This model was originally developed in \cite{gaffney,bsz} as a compactification of contact singularity classes of Morin type, and localisation theory on this model resulted in new formulas for Thom polynomials in \cite{bsz}. The model was later succesfully applied in \cite{bkGGL,bercziG&T,berczitau2,berczitau3,berczithom}.

\subsection{Jets of holomorphic maps}\label{subsubsec:holmaps} For $u,v$ positive integers we introduce the notation  
\[J_k(u,v)=\{f: (\CC^u,0) \to (\CC^v,0) \text{ holomorphic germ}\}/\sim \]
where 
\[f\sim g \Leftrightarrow f^{(j)}(0)=g^{(j)}(0) \text{ for } j=1,\ldots ,k.\]
In short, $J_k(u,v)$ is the vector space of $k$-jets of germs $f:(\CC^u,0) \to (\CC^v,0)$, and we simply refer to elements of
$J_k(u,v)$ as map-jets. Choosing coordinates on $\CC^u$ and $\CC^v$ a $k$-jet $f \in J_k(u,v)$ can be identified with the set of derivatives at the
origin, that is the vector $(f'(0),f''(0),\ldots, f^{(k)}(0))$, where
$f^{(j)}(0)\in \mathrm{Hom}(\mathrm{Sym}^j\CC^u,\CC^v)$. This way we
get the equality
\begin{equation}\label{identification}
J_k(u,v) \simeq J_k(u,1) \otimes \CC^v \simeq \oplus_{j=1}^k\mathrm{Hom}(\mathrm{Sym}^j\CC^u,\CC^v).
\end{equation}
Hence
$\dim J_k(u,v) =v \binom{u+k}{k}-v$. 

The composition map 
\begin{equation}
  \label{comp}
\jetk uv \times \jetk vw \to \jetk uw,\;\;  (\Psi_1,\Psi_2)\mapsto
\Psi_2\circ\Psi_1 \mbox{ modulo terms of degree $>k$ }.
\end{equation}
is defined naturally by composing map-jets via substitution followed by elimination of terms
of degree at least $k+1$.
When $k=1$, $J_1(u,v)$ may be identified with $u$-by-$v$ matrices,
and \eqref{comp} reduces to multiplication of matrices.

Elements
of $J_k(1,n)$ are $k$-jet of curves $\gamma: (\CC,0) \to (\CC^n,0)$. We introduce the notation $\jetreg 1n$ for the set
of regular curves:
\[\jetreg 1n=\left\{\g \in \jetk 1n; \g'(0)\neq 0 \right\}\]
consisting of curves with non-vanishing first derivative.
Note that $\jetreg uu$ with the composition map \eqref{comp} has a natural group structure and we will use the notation
\[\mathrm{Diff}_k(u)=\jetreg uu\]
for this reparametrisation group. We note that $\mathrm{Diff}_k(u)$ is not reductive, and it played a central role in the development of non-reductive geometric invariant theory \cite{bkcoh}.

Eliminating the terms of degree $k$ defines the natural algebra homomorphism
$J_k(u,1) \twoheadrightarrow J_{k-1}(u,1)$, and hence the sequence of homomorphisms
$J_k(u,1) \twoheadrightarrow J_{k-1}(u,1)
\twoheadrightarrow \ldots \twoheadrightarrow J_1(u,1)$. This  induces
an increasing filtration on $J_k(u,1)^*$:
\begin{equation}\label{filtration} 
J_1(u,1)^* \subset J_2(u,1)^* \subset \ldots \subset J_k(u,1)^*
\end{equation}
Elements of $J_i(u,1)^*$ can be interpreted as differential operators on $\CC^u$ of degree at most
$i$, and hence 
\begin{equation}\label{eq:dual}
  J_k(u,1)^* \cong \sym^{\le k}\CC^u \overset{\mathrm{def}}=
\oplus_{l=1}^k \sym^l\CC^u,
\end{equation}
where $\sym^l$ stands for the symmetric tensor product and the
isomorphism is that of filtered $\GL(n)$-modules.

\subsection{The test curve model}
The idea of the model is simple: let $\xi \in \Curv^{k+1}_0(\CC^n)$ be a curvilinear subscheme supported at the origin on $\CC^n$. This means that $\calo_\xi \simeq \CC[t]/t^{k+1}$ and hence $\xi$ is (scheme theoretically) contained in a smooth (germ of a) curve  $\mathcal{C}_0$ in $\CC^n$:
\[\xi \subset \mathcal{C}_0 \subset \CC^n.\]
Let $f_{\xi}:(\CC,0)\to (\CC^n,0)$ be a $k$-jet which  parametrises $\mathcal{C}_0$. 
This $f_{\xi}\in J^\reg_k(1,n)$ is determined only up to polynomial reparametrisations of the source space $\phi: (\CC,0)\to (\CC,0)$ and therefore 
\begin{lemma}\label{lemma:quotient} The punctual curvilinear locus $\Curv^{[k+1]}_0(\CC^n)$ is equal (as a set) to the set of $k$-jet of regular germs at the origin, modulo polynomial reparametrisations: 
\begin{equation}\nonumber
\Curv^{k+1}_0(\CC^n)=\{\text{regular }k\text{-jets } (\CC,0)\to (\CC^n,0)\}/\{\text{regular }k\text{-jets } (\CC,0)\to (\CC,0)\}.
\end{equation}
\end{lemma}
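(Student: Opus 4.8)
The plan is to establish the set-theoretic bijection by exhibiting the map in both directions and checking that the fibres of the parametrisation are exactly the reparametrisation orbits. First I would make the map $J^\reg_k(1,n) \to \Curv^{k+1}_0(\CC^n)$ explicit: given a regular $k$-jet $f=(f'(0),\dots,f^{(k)}(0))$, the truncated Taylor expansion defines a ring homomorphism $\CC[x_1,\dots,x_n] \to \CC[t]/t^{k+1}$ sending $x_i \mapsto f_i(t) \bmod t^{k+1}$. Because $f$ is regular, $f'(0)\neq 0$, so after a linear change of coordinates on $\CC^n$ we may assume $f_1'(0)\neq 0$; then $f_1(t)$ is a unit times $t$ modulo $t^{k+1}$, the homomorphism is surjective, and its kernel $I_f$ is an ideal of colength $k+1$ whose quotient is $\CC[t]/t^{k+1}$. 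Hence $I_f$ defines a point of $\Curv^{k+1}_0(\CC^n)$, and by definition every curvilinear subscheme supported at the origin arises this way, since the containment $\xi\subset\mathcal C_0$ in a smooth curve-germ gives a regular parametrisation $f_\xi$ of $\mathcal C_0$ (a smooth germ has a regular parametrisation) with $\xi$ cut out by the corresponding truncation. This shows the map is well defined and surjective.

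Next I would analyse when two regular jets $f,g$ produce the same subscheme. If $g = f\circ\phi$ for some $\phi\in\diff_k(1)$ (i.e.\ $\phi(0)=0$, $\phi'(0)\neq 0$), then substituting $t\mapsto\phi(t)$ is an automorphism of $\CC[t]/t^{k+1}$, so $I_f = I_g$ and the two jets give the same point; thus the map factors through the quotient $J^\reg_k(1,n)/\diff_k(1)$. For the converse — the genuinely substantive direction — suppose $I_f = I_g$. Then the two surjections $\CC[x_1,\dots,x_n]\twoheadrightarrow\CC[t]/t^{k+1}$ have the same kernel, so there is a $\CC$-algebra automorphism $\psi$ of $\CC[t]/t^{k+1}$ intertwining them, i.e.\ $g_i \equiv \psi(f_i)\pmod{t^{k+1}}$ for all $i$. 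Every $\CC$-algebra automorphism of $\CC[t]/t^{k+1}$ is induced by $t\mapsto \phi(t)$ for a (truncated) power series $\phi$ with $\phi(0)=0$ and $\phi'(0)$ a unit — one sees this by writing $\psi(t) = \phi(t)$ and checking that invertibility of $\psi$ forces $\phi'(0)\neq 0$, so $\phi\in\diff_k(1)$. Then $g = f\circ\phi$ at the level of $k$-jets, which places $f$ and $g$ in the same $\diff_k(1)$-orbit. Combining the two directions gives the claimed equality of sets.

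The main obstacle is the converse direction, and within it the clean identification of $\mathrm{Aut}_{\CC\text{-alg}}(\CC[t]/t^{k+1})$ with $\diff_k(1)$: one must verify that a $\CC$-algebra endomorphism $t\mapsto\phi(t)$ of $\CC[t]/t^{k+1}$ is invertible precisely when $\phi$ has nonzero linear term, and that the group law on these truncated reparametrisations matches the composition \eqref{comp} defining $\diff_k(1)=J^\reg_k(1,1)$. This is elementary but is the place where the regularity hypothesis is really used, and where one must be careful that "composition modulo degree $>k$" on the curve side corresponds exactly to substitution in the quotient ring. The surjectivity step also deserves a brief remark that an abstract smooth curve-germ through the origin in $\CC^n$ admits a \emph{regular} analytic parametrisation $(\CC,0)\to(\CC^n,0)$, which is standard (implicit function theorem / the curve is a graph over one coordinate after a linear change). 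Once these points are in place the lemma follows formally.
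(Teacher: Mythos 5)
Your proof is correct and follows essentially the same route as the paper, which simply asserts that a curvilinear $\xi$ is cut out by a regular $k$-jet parametrising a smooth curve-germ through it, determined up to reparametrisation. Your identification of $\mathrm{Aut}_{\CC\text{-alg}}(\CC[t]/t^{k+1})$ with $\diff_k(1)$ just supplies the detail the paper leaves implicit in the phrase ``determined only up to polynomial reparametrisations,'' so there is nothing further to add.
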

We introduce the shorthand notation $\diff_k(1)=J_k^\reg(1,1)$ for the group of polynomial reparametrisations of the complex line at the origin. Lemma \ref{lemma:quotient} then can be rephrased as $\Curv^{k+1}_0(\CC^n)=J_k^\reg(1,n)/\diff_k(1)$.
The action of $\diff_k(1)$ on $\jetreg 1n$ is defined in \eqref{comp}, and can be written in coordinates as follows. Let 
\[f_{\xi}(z)=z f'(0)+\frac{z^2}{2!}f''(0)+\ldots +\frac{z^k}{k!}f^{(k)}(0) \in \jetreg 1n\]
be the $k$-jet of a germ at the origin (i.e no constant term) in $\CC^n$ with $f^{(i)}\in \CC^n$ such that $f' \neq 0$ and let  
\[\varphi(z)=\alpha_1z+\alpha_2z^2+\ldots +\alpha_k z^k \in \jetreg 11\]
with $\alpha_i\in \CC, \alpha_1\neq 0$. Then 
 \[f \circ\varphi(z)
=(f'(0)\alpha_1)z+(f'(0)\alpha_2+
\frac{f''(0)}{2!}\alpha_1^2)z^2+\ldots
+\left(\sum_{i_1+\ldots +i_l=k}
\frac{f^{(l)}(0)}{l!}\alpha_{i_1}\ldots \alpha_{i_l}\right)z^k=\]
\begin{equation}\label{jetdiffmatrix}
=(f'(0),\ldots, f^{(k)}(0)/k!)\cdot 
\left(
\begin{array}{ccccc}
\alpha_1 & \alpha_2   & \alpha_3          & \ldots & \alpha_k \\
0        & \alpha_1^2 & 2\alpha_1\alpha_2 & \ldots & 2\alpha_1\alpha_{k-1}+\ldots \\
0        & 0          & \alpha_1^3        & \ldots & 3\alpha_1^2\alpha_ {k-2}+ \ldots \\
0        & 0          & 0                 & \ldots & \cdot \\
\cdot    & \cdot   & \cdot    & \ldots & \alpha_1^k
\end{array}
 \right)
 \end{equation}
where the $(i,j)$ entry is $p_{i,j}(\bar{\alpha})=\sum_{a_1+a_2+\ldots +a_i=j}\alpha_{a_1}\alpha_{a_2} \ldots \alpha_{a_i}.$

\begin{remark}\label{naturalembedding}
The linearisation of the action of $\diff_k(1)$ on $\jetreg 1n$ can be represented as a matrix multiplication, as shown in equation \eqref{jetdiffmatrix}. Under this representation $\diff_k(1)$ is an upper triangular matrix group in $\GL(n)$, which is not reductive; hence Mumford's reductive GIT cannot be applied to study the geometry of the quotient $\jetreg 1n/\diff_k(1)$. 

Note that the matrix group is parameterized along its first row, with the free parameters $\alpha_1, \ldots, \alpha_k$. The remaining entries are determined by certain (weighted homogeneous) polynomials in these free parameters. The group is the $\CC^*$ extension of its maximal unipotent radical:
\[\diff_k(1) = U \rtimes \CC^*,
\]
where $U$ is the subgroup obtained by substituting $\alpha_1 = 1$, and the diagonal $\CC^*$ acts on the Lie algebra $\mathrm{Lie}(U)$ with weights $0, 1, \ldots, n-1$.

For a broader context and analysis of actions of groups with similar characteristics see B\'erczi and Kirwan \cite{bkcoh} and B\'erczi, Doran, Hawes, and Kirwan \cite{bdhk, bdhk2}. 
\end{remark}

Fix an integer $N\ge 1$ and define
\[\Theta_k=\left\{\Psi\in J_k(n,N):\exists \g \in \jetreg 1n: \Psi \circ \g=0
\right\}.\]
$\Theta_k$ is the set of those $k$-jets of germs on $\CC^n$ at the origin which vanish on some regular curve. We call $\g$ a test
curve of $\Theta$ if $\Psi \circ \gamma=0$ holds. By definition, $\Theta_k$ is the image
of the closed subvariety of $\jetk nN \times \jetreg 1n$ defined by
the algebraic equations $\Psi \circ \g=0$, under the projection to
the first factor.  

\begin{remark}
The subset $\Theta_k$ in the jet space $J_k(n,N)$ is the closure of an important class of singularities known as Morin singularities. The equivariant dual of $\Theta_k$ in $J_k(n,N)$ is the Thom polynomial of Morin singularities, see B\'erczi and Szenes \cite{bsz} and Feh\'er and Rim\'anyi \cite{rf} for details. 
\end{remark}

The key observation is the following: test curves of germs are not unique; if $\g$ is a test
curve of $\Psi \in \Theta_k$, and $\vp \in \diff_k(1)$ is a 
holomorphic reparametrisation of $\CC$, then $\g \circ \vp$ is,
again, a test curve of $\Psi$: 
\begin{displaymath}
\label{basicidea}
\xymatrix{
  \CC \ar[r]^\vp & \CC \ar[r]^\g & \CC^n \ar[r]^{\Psi} & \CC^N}
\end{displaymath}
\[\Psi \circ \g=0\ \ \Rightarrow \ \ \ \Psi \circ (\g \circ \vp)=0\]

In \cite{bsz} we prove that we get all test curves of $\Psi$ in this way if the linear part of $\Psi$ has
$1$-dimensional kernel, which is an open property in $\theta_k$.  Before stating this in Theorem 
\ref{embedgrass} below, we illustrate the equation $\Psi \circ
\g=0$ for small order.  Let
$\g=(\g',\g'',\ldots, \g^{(k)})\in \jetreg 1n$ and
$\Psi=(\Psi',\Psi'',\ldots, \Psi^{(k)})\in \jetk nN$ be the
$k$-jets of the test curve $\g$ and the map $\Psi$ respectively. Using the chain rule and the notation $v_i=\g^{(i)}/i!$, the equation $\Psi \circ \g=0$ reads
as follows for $k=4$:
\begin{eqnarray} \label{eqn4}
& \Psi'(v_1)=0 \\ \nonumber & \Psi'(v_2)+\Psi''(v_1,v_1)=0 \\
\nonumber
& \Psi'(v_3)+2\Psi''(v_1,v_2)+\Psi'''(v_1,v_1,v_1)=0 \\
&
\Psi'(v_4)+2\Psi''(v_1,v_3)+\Psi''(v_2,v_2)+
3\Psi'''(v_1,v_1,v_2)+\Psi''''(v_1,v_1,v_1,v_1)=0
\nonumber
\end{eqnarray}

%To simplify our formulas we introduce the following notations for a
%partition $\tau=[i_1\ldots i_l]$ of the integer $i_1+\ldots +i_l$:
%\begin{itemize}
%\item
% the
%\textsl{length}: $|\tau|=l$,
%\item  the \textsl{sum}: $\sum\tau=i_1+\ldots +i_l$,
%\item \textsl{number of permutations}: $\comb(\tau)$, which is
 % the number of different sequences consisting of the numbers
%  $i_1,\dots, i_l$; e.g. $\comb([1,1,1,3])=4$.
%\item $\bg_\tau = \prod_{j=1}^l \g^{(i_j)}\in\sym^l\CC^n\;\text{ and }\;
%\Psi(\bg_\tau)=\Psi^l(\g^{(i_1)},\dots,\g^{(i_l)})\in\CC^N$.
%\end{itemize}

\begin{lemma}[Gaffney \cite{gaffney}, B\'erczi-Szenes \cite{bsz}]\label{explgp} Let
$\g=(\g',\g'',\ldots, \g^{(k)})\in \jetreg 1n$ and
$\Psi=(\Psi',\Psi'',\ldots, \Psi^{(k)})\in \jetk nN$ be $k$-jets.
Then substituting $v_i=\g^{(i)}/i!$, the equation $\Psi\circ \g=0$ is equivalent to
  the following system of $k$ linear equations with values in
  $\CC^N$:
\begin{equation}
  \label{modeleq}
\sum_{i_1+\ldots +i_s=m} \Psi^{(s)}(v_{i_1},\ldots, v_{i_s})=0,\quad m=1,2,\dots, k
\end{equation}
\end{lemma}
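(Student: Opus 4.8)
The plan is to expand the composed $k$-jet $\Psi\circ\g$ as a truncated power series in the source coordinate and read off its coefficients; this is nothing more than multilinearity applied to the chain rule. Since $\g(0)=0$ we may write the $k$-jet of the test curve as $\g(z)=\sum_{i=1}^{k} v_i z^i$ with $v_i=\g^{(i)}(0)/i!\in\CC^n$, and since $\Psi(0)=0$ the identification \eqref{identification} lets us represent $\Psi$ by the polynomial map $w\mapsto\sum_{s=1}^{k}\Psi^{(s)}(w,\dots,w)$, $w\in\CC^n$, where $\Psi^{(s)}\in\Hom(\sym^s\CC^n,\CC^N)$ is read as a symmetric $s$-linear map with all $s$ slots carrying the same argument $w$. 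By the definition \eqref{comp} of composition of map-jets, the $k$-jet of $\Psi\circ\g$ is the class in $J_k(1,N)$ of the polynomial obtained by substituting $w=\g(z)$ and deleting all terms of degree $>k$ in $z$.

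First I would plug $w=\g(z)=\sum_{i\ge 1}v_i z^i$ into the degree-$s$ summand and use multilinearity of $\Psi^{(s)}$ to obtain
\[
\Psi^{(s)}\big(\g(z),\dots,\g(z)\big)=\sum_{i_1,\dots,i_s\ge 1}\Psi^{(s)}(v_{i_1},\dots,v_{i_s})\,z^{i_1+\dots+i_s}.
\]
Summing over $s$ and collecting the coefficient of $z^m$ produces precisely $\sum_{i_1+\dots+i_s=m}\Psi(v_{i_1},\dots,v_{i_s})$, the left-hand side of \eqref{modeleq}; here the implicit index $s$ runs over $1,\dots,m$, because a monomial $z^{i_1+\dots+i_s}$ with all $i_j\ge 1$ has degree at least $s$. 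Moreover the truncation of $\g$ at order $k$ costs nothing, since every $v_i$ appearing in the coefficient of $z^m$ has $i\le m\le k$. Hence the $k$-jet $\Psi\circ\g\in J_k(1,N)$ has its $z^m$-coefficient equal to the left side of \eqref{modeleq} for each $m=1,\dots,k$.

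To finish I would note that an element of $J_k(1,N)$ vanishes exactly when its coefficients of $z^1,\dots,z^k$ all vanish, so $\Psi\circ\g=0$ is equivalent to the simultaneous vanishing of \eqref{modeleq} for $m=1,\dots,k$; for a fixed test curve $\g$ each of these is a $\CC^N$-valued linear form in $\Psi=(\Psi^{(1)},\dots,\Psi^{(k)})$, which is the claim. As a sanity check I would specialise to $k=4$ and use the symmetry of each $\Psi^{(s)}$ to merge equal unordered tuples --- e.g.\ the two orderings of $(1,2)$ give $\Psi''(v_1,v_2)+\Psi''(v_2,v_1)=2\Psi''(v_1,v_2)$, the three orderings of $(1,1,2)$ give $3\Psi'''(v_1,v_1,v_2)$ --- which reproduces \eqref{eqn4} verbatim.

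I expect the main difficulty here to be purely a matter of bookkeeping rather than any real obstacle, since the identity is just the multilinear form of the Fa\`a di Bruno chain rule. The two points that need care are: normalising $\Psi^{(s)}$ so that $\Psi(w)=\sum_s\Psi^{(s)}(w,\dots,w)$ is compatible with the identification \eqref{identification} (equivalently, absorbing the factorials consistently between $\Psi$ and its derivatives), and verifying that the ``delete terms of degree $>k$'' clause in \eqref{comp} does not interfere --- which it does not, precisely because the $z^m$-coefficient for $m\le k$ only ever sees the jet data $v_1,\dots,v_m$ and $\Psi^{(1)},\dots,\Psi^{(m)}$.
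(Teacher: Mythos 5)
Your proposal is correct and matches the paper's (largely implicit) argument: the paper simply invokes the chain rule, displays the $k=4$ case \eqref{eqn4}, and cites Gaffney and B\'erczi--Szenes, which is exactly the substitute-and-collect-coefficients expansion you carry out. Your attention to the factorial normalisation of the $\Psi^{(s)}$ and to the harmlessness of the degree-$k$ truncation covers the only points of genuine bookkeeping.
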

The equations \eqref{modeleq} are linear in $\Psi$, hence for a given $\g \in \jetreg 1n$ and $1\le i \le k$ the solution set of the first $i$ equations in \eqref{modeleq}
\begin{equation}\label{solutionspace}
\mathcal{S}^{i,N}_\g=\left\{\Psi \in \jetk nN, \Psi \circ \g=0 \text{ up to order } i \right\}\subset \jetk nN
\end{equation}
is a linear subspace of codimension $iN$. Hence $\mathcal{S}^{i,N}_\g$ is a point of $\grass_{\mathrm{codim}=iN}(J_k(n,N))$, whose orthogonal, $(\mathcal{S}^{i,N}_{\g})^\perp$, is an $iN$-dimensional subspace of $J_k(n,N)^*$. The $N$ vanishing coordinates of $\Psi \circ \gamma$ represent this subspace as 
\[(\mathcal{S}^{i,N}_{\g})^\perp=(\mathcal{S}^{i,1}_{\g})^\perp \otimes \CC^N,\]
and this subspace is invariant under the reparametrisation of $\gamma$. 
When $N\ge n$ then
\[\tilde{\mathcal{S}}^{i,N}_{\g}=\{\Psi \in \mathcal{S}^{i,N}_{\g}: \dim \ker \Psi^1=1\}\]
is an open subset of the subspace $\mathcal{S}^{i,N}_{\g}$. In fact it is not hard to see that the complement $\tilde{\mathcal{S}}^{i,N}_{\g}\setminus \mathcal{S}^{i,N}_{\g}$where the kernel of $\Psi^1$ has dimension at least two is a closed subvariety of codimension $N-n+2$. The next theorem first appeared in \cite{bsz}, but to make the this section self-contained, we present the proof again.

%Furthermore, for $\g \in J_k(1,n)$ by putting $\g^{(i)}/i! \in \CC^n$ into the $i$th column of a matrix we can identify elements of $\jetreg 1n$ as $k$-by-$n$ matrices, that is, elements of $\Hom(\CC^k,\CC^n)$ with nonzero first column.   

\begin{theorem}[B\'erczi-Szenes \cite{bsz}]\label{embedgrass} The map
\[\phi: \jetreg 1n \rightarrow \grass_k(J_k(n,1)^*)\]
defined as  $\gamma  \mapsto (\mathcal{S}^{k,1}_\g)^\perp$
is $\diff_k(1)$-invariant and induces an injective map on the $\diff_k(1)$-orbits into the Grassmannian 
\[\phi^\grass: \jetreg 1n /\diff_k(1) \hookrightarrow \grass_k(J_k(n,1)^*).\]
Moreover, $\phi$ and $\phi^\grass$ are $\GL(n)$-equivariant with respect to the standard action of $\GL(n)$ on $\jetreg 1n \subset \Hom(\CC^k,\CC^n)$ and the induced action on $\grass_k(J_k(n,1)^*)$.
\end{theorem}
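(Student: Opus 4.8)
\textbf{Proof plan for Theorem \ref{embedgrass}.}

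The plan is to establish the three assertions — $\diff_k(1)$-invariance, injectivity on orbits, and $\GL(n)$-equivariance — in that order, since each builds on the geometric picture from Lemma \ref{explgp}. First I would verify invariance. Given $\gamma \in \jetreg 1n$ and $\varphi \in \diff_k(1)$, the reparametrised curve $\gamma \circ \varphi$ satisfies: $\Psi \circ \gamma = 0$ up to order $k$ if and only if $\Psi \circ (\gamma \circ \varphi) = 0$ up to order $k$. One direction is the displayed implication $\Psi \circ \gamma = 0 \Rightarrow \Psi \circ (\gamma \circ \varphi) = 0$; the converse follows because $\varphi$ is invertible in the group $\diff_k(1)$, so $\gamma = (\gamma \circ \varphi) \circ \varphi^{-1}$. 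Hence $\mathcal{S}^{k,1}_{\gamma} = \mathcal{S}^{k,1}_{\gamma \circ \varphi}$ as subspaces of $J_k(n,1)$, and taking orthogonals gives $\phi(\gamma) = \phi(\gamma \circ \varphi)$. This shows $\phi$ factors through the orbit space, defining $\phi^\grass$.

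Next, injectivity of $\phi^\grass$. Suppose $\phi(\gamma_1) = \phi(\gamma_2)$, i.e. $(\mathcal{S}^{k,1}_{\gamma_1})^\perp = (\mathcal{S}^{k,1}_{\gamma_2})^\perp$, equivalently $\mathcal{S}^{k,1}_{\gamma_1} = \mathcal{S}^{k,1}_{\gamma_2} =: \mathcal{S}$. I must produce $\varphi \in \diff_k(1)$ with $\gamma_2 = \gamma_1 \circ \varphi$. The idea is to reconstruct the reparametrisation from the solution space. Here I would use the test-curve interpretation: a codimension-$k$ subspace of $J_k(n,1)$ of the special form $\mathcal{S}^{k,1}_\gamma$ remembers $\gamma$ up to reparametrisation because the filtration \eqref{filtration} interacts with the equations \eqref{modeleq} in a controlled way. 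Concretely, restricting to order $i$ one recovers $\mathcal{S}^{i,1}_{\gamma_1} = \mathcal{S}^{i,1}_{\gamma_2}$ for all $i \le k$ (these are the intersections of $\mathcal{S}$ with the filtration steps $J_i(n,1)$, so they agree), and the order-$1$ equation $\Psi'(v_1) = 0$ forces $\gamma_1'(0)$ and $\gamma_2'(0)$ to span the same line in $\CC^n$, giving $\gamma_2'(0) = \alpha_1 \gamma_1'(0)$ for a nonzero scalar $\alpha_1$. Proceeding inductively on the order $i$, the $i$-th equation in \eqref{modeleq} — which is linear in the unknowns and whose coefficients are the $v_{i_1},\ldots$ built from derivatives of the curve — pins down $\gamma_2^{(i)}(0)$ in terms of $\gamma_1^{(1)}(0),\ldots,\gamma_1^{(i)}(0)$ and free scalars $\alpha_1,\ldots,\alpha_i$, and these scalars are exactly the coefficients of the sought-for $\varphi(z) = \alpha_1 z + \cdots + \alpha_k z^k$, as read off from \eqref{jetdiffmatrix}. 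The regularity $\gamma_1'(0) \ne 0$ is what makes this system solvable at each stage (the coefficient of the new unknown $\gamma_2^{(i)}$ is $\Psi'$, and since $\ker\Psi'$ is one-dimensional transverse to the relevant data, the equation determines the projection of $\gamma_2^{(i)}$ onto the complement, while its component along $\gamma_1'(0)$ is the free parameter $\alpha_i$). This is the step I expect to be the main obstacle: making precise why the subspace $\mathcal{S}^{k,1}_\gamma$ determines the $\diff_k(1)$-orbit of $\gamma$ and not more, i.e. the genuine content behind "we get all test curves in this way", which in \cite{bsz} relies on the $1$-dimensional kernel hypothesis being an open condition.

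Finally, $\GL(n)$-equivariance. For $g \in \GL(n)$ acting on $\jetreg 1n$ by post-composition $\gamma \mapsto g \circ \gamma$, one has $\Psi \circ (g\gamma) = (\Psi g) \circ \gamma$ where $\Psi g \in J_k(n,1)$ is the precomposition of the differential operator $\Psi$ with $g^{-1}$ on the source $\CC^n$ (using the identification \eqref{eq:dual}); thus $\Psi \circ (g\gamma) = 0$ up to order $k$ iff $(\Psi \circ g^{-1}) \circ \gamma = 0$ up to order $k$, so $\mathcal{S}^{k,1}_{g\gamma} = g \cdot \mathcal{S}^{k,1}_\gamma$ and on orthogonals $\phi(g\gamma) = g \cdot \phi(\gamma)$, where $\GL(n)$ acts on $\grass_k(J_k(n,1)^*)$ through its induced action on $J_k(n,1)^* \cong \symdot$. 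The same computation carried through the quotient gives equivariance of $\phi^\grass$. The two equivariance statements are bookkeeping once the identification of the $\GL(n)$-action on $J_k(n,1)^*$ with the filtered module structure of \eqref{eq:dual} is in hand, so no real difficulty arises here beyond keeping track of left versus right actions.
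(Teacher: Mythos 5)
Your treatment of $\diff_k(1)$-invariance and of $\GL(n)$-equivariance is fine and is essentially what the paper does (the paper in fact only writes out the injectivity step, the other two being the routine computations you describe). The gap is in the injectivity argument, precisely at the place you flag as the main obstacle. As written you work with a single scalar-valued jet $\Psi\in J_k(n,1)$; for $n\ge 2$ the kernel of its linear part $\Psi'$ is a hyperplane of dimension $n-1$, never one-dimensional, so the sentence ``since $\ker\Psi'$ is one-dimensional \dots the equation determines the projection of $\gamma_2^{(i)}$'' does not apply: one scalar equation at order $i$ constrains $\gamma_2^{(i)}$ only modulo an $(n-1)$-dimensional space, and you cannot conclude $w_i-v_i\in\CC\,v_1$ this way. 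The one-dimensional-kernel hypothesis lives in $J_k(n,N)$ with $N\ge n$, and your plan never builds the bridge from the hypothesis $(\mathcal{S}^{k,1}_{\gamma_1})^\perp=(\mathcal{S}^{k,1}_{\gamma_2})^\perp$ to the existence of such a vector-valued $\Psi$ vanishing on both curves. This bridge is exactly what the paper uses: since $(\mathcal{S}^{k,N}_{\gamma})^\perp=(\mathcal{S}^{k,1}_{\gamma})^\perp\otimes\CC^N$, equality of the $k$-dimensional orthogonals forces $\mathcal{S}^{k,N}_{\gamma_1}=\mathcal{S}^{k,N}_{\gamma_2}$ for any $N\ge n$, and inside this common solution space the locus where $\dim\ker\Psi^1\ge 2$ has codimension $N-n+2\ge 2$, so a $\Psi$ with $\dim\ker\Psi^1=1$ annihilating both $\gamma_1$ and $\gamma_2$ exists.

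Once such a $\Psi$ is in hand, the paper's induction is close to the one you sketch, but cleaner: assuming the claim for $k-1$, reparametrise so that $v_m=w_m$ for $m\le k-1$; the order-$k$ equation in \eqref{modeleq} then gives $\Psi^1(v_k)=\Psi^1(w_k)$, hence $w_k=v_k-\lambda v_1$ because $\ker\Psi^1=\CC\,v_1$, and $\Delta(t)=t+\lambda t^k$ finishes the step. If you insist on staying with $N=1$, you can instead argue directly with the explicit spanning vectors of \eqref{sgamma}, i.e.\ with equality of the spans $\mathrm{Span}(v_1,v_2+v_1^2,\dots)=\mathrm{Span}(w_1,w_2+w_1^2,\dots)$; but then the inductive step must cope with possible linear dependencies among $v_1,\dots,v_k$ (a regular jet need not be nondegenerate), which your sketch does not address. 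Your preliminary observation that the filtration recovers the lower-order data is correct and salvageable: indeed $(\mathcal{S}^{k,1}_\gamma)^\perp\cap\Sym^{\le i}\CC^n=(\mathcal{S}^{i,1}_\gamma)^\perp$, because the $j$-th spanning vector contains the term $v_1^j$, which is the unique contribution in top symmetric degree $j$ and is nonzero by regularity.
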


\begin{proof}
For the first part it is enough to prove that for $\Psi \in \Theta_k$ with $\dim \ker \Psi^1=1$ and $\gamma,\delta \in \jetreg 1n$
\[\Psi \circ \gamma=\Psi \circ \delta=0 \Leftrightarrow \exists
\Delta \in \jetreg 11 \text{ such that } \gamma=\delta
\circ \Delta.\]
We prove this statement by induction. Let $\gamma=v_1t+\dots +v_kt^k$ and
$\delta=w_1t+\dots+ w_kt^k$. Since $\dim \ker \Psi^1=1$, $v_1=\lambda w_1$, for some
$\lambda\neq0$. This proves the $k=1$ case. 

Suppose the statement is true for $k-1$. Then, using the appropriate
order-($k-1$) diffeomorphism, we can assume that $v_m=w_m$, $m=1\ldots
k-1.$ It is clear then from the explicit form \eqref{modeleq}
(cf. \eqref{eqn4}) of the equation
$\Psi\circ\gamma=0$, that  $\Psi^1(v_k)=\Psi^1(w_k)$, and hence
$w_k=v_k-\lambda v_1$ for some $\lambda\in\CC$. Then
$\gamma=\Delta\circ\delta$ for $\Delta=t+\lambda t^k$, and the proof
is complete.
\end{proof}

\begin{remark}\label{remark:orbit}
%\item In particular the second part of Theorem \ref{embedgrass} tells us that the curvilinear component $\overline{CX}^{[k+1]}_p=\overline{\mathrm{im}(\phi^\grass_p)}$ has a $\GL(n)$-equivariant embedding into the Grassmannian $\grass_k(\cald^k_{X,p})$ as the closure of the image of $\phi^\grass_p$. 
For a point $\gamma\in J_k^\reg(1,n)$ let $v_i=\frac{\g^{(i)}}{i!}\in \CC^n$ denote the normed $i$th derivative. Then from Lemma \ref{explgp} immediately follows that for $1\le i \le k$ (see \cite{bsz}):
\begin{equation}\label{sgamma}
(\mathcal{S}^{i,1}_\g)^\perp=\mathrm{Span}_\CC (v_1,v_2+v_1^2,\ldots, \sum_{i_1+\ldots +i_s=k}v_{i_1}\ldots v_{i_s})\subset \symdot
\end{equation} 
\end{remark}

Recall from \eqref{eq:dual} that $J_k(n,1)^*=\symdot$. Note that the image of $\phi$ and the image of $\varphi$ defined in Remark \ref{remark:embed} coincide in $\grass_k(\symdot)$:
\[\mathrm{im}(\phi)=\mathrm{im}(\varphi)\subset \grass_k(\symdot).\]
Hence their closure is the same, which is the first part of the next theorem. The second part follows from Remark \ref{remark:orbit} because $\phi$ is $\GL(n)$-equivariant.
\begin{theorem}[Test curve model for $\CHilb^{k+1}_0(\CC^n)$, \cite{bercziG&T}]\label{bszmodel}
\begin{enumerate}
\item For any $k,n$ we have 
\[\CHilb^{k+1}_0(\CC^n)=\overline{\mathrm{im}(\phi^\grass)} \subset \grass_k(\symdot).\]
\item Let $\{e_1,\ldots, e_n\}$ be a basis of $\CC^n$. For $k\le n$ the $\GL(n)$-orbit of 
\[p_{k,n}=\phi(e_1,\ldots, e_k)=\mathrm{Span}_\CC (e_1,e_2+e_1^2,\ldots, \sum_{i_1+\ldots +i_s=k}e_{i_1}\ldots e_{i_s})\] 
forms a dense subset of the image $\jetreg 1n$ and therefore 
\[\CHilb^{k+1}_0(\CC^n)=\overline{\mathrm{GL_n} \cdot p_{k,n}}.\] 
\end{enumerate}
\end{theorem}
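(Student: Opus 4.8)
The plan is to establish both parts by leveraging the identification $\mathrm{im}(\phi) = \mathrm{im}(\varphi)$ in $\grass_k(\symdot)$ already recorded above, together with the explicit orbit description from Remark~\ref{remark:orbit}. For part (1), I would argue as follows. By Lemma~\ref{lemma:quotient}, $\Curv^{k+1}_0(\CC^n) = J^\reg_k(1,n)/\diff_k(1)$ as sets, and Theorem~\ref{embedgrass} shows $\phi^\grass$ injects this quotient into $\grass_k(J_k(n,1)^*) = \grass_k(\symdot)$. On the other hand, Remark~\ref{remark:embed} gives the embedding $\varphi: \Hilb^{k+1}_0(X) \hookrightarrow \grass_k(\symdot)$ sending a subscheme $\xi$ to the dual $S_\xi$ of $I_\xi \subset \mathfrak{m}$. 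The key point is to check that these two maps have the same image: a curvilinear $\xi$ parametrised by $f_\xi \in J^\reg_k(1,n)$ has $I_\xi = \{P \in \CC[\mathbf{x}] : P \circ f_\xi = 0 \bmod t^{k+1}\}$, so the dual space $S_\xi \subset \symdot$ is exactly the span of the differential operators $v_1, v_2 + v_1^2, \ldots, \sum_{\tau \in \mathcal{P}(k)} \bv_\tau$ appearing in \eqref{sgamma} — i.e.\ $S_\xi = (\mathcal{S}^{k,1}_{f_\xi})^\perp = \phi(f_\xi)$. Hence $\varphi(\Curv^{k+1}_0) = \phi(J^\reg_k(1,n))$. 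Since $\varphi$ is a (locally closed) embedding and the curvilinear component is by definition the closure $\overline{\Curv^{k+1}_0(\CC^n)}$, taking closures gives $\CHilb^{k+1}_0(\CC^n) = \overline{\varphi(\Curv^{k+1}_0)} = \overline{\mathrm{im}(\phi^\grass)}$.

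For part (2), I would first verify that $p_{k,n} = \phi(e_1, \ldots, e_k)$ is well-defined, i.e.\ that the curve $\gamma_0 = e_1 t + e_2 t^2 + \cdots + e_k t^k$ lies in $J^\reg_k(1,n)$: this is immediate since its first derivative is $e_1 \neq 0$, using $k \le n$ so that $e_1, \ldots, e_k$ are genuinely linearly independent basis vectors. The formula for $p_{k,n}$ is then just \eqref{sgamma} with $v_i = e_i$. Next, I would show the $\GL(n)$-orbit of $p_{k,n}$ is dense in $\mathrm{im}(\phi^\grass)$. Since $\phi$ is $\GL(n)$-equivariant by Theorem~\ref{embedgrass}, it suffices to show $\GL(n) \cdot \gamma_0$ is dense in $J^\reg_k(1,n)$; but a generic $\gamma = v_1 t + \cdots + v_k t^k \in J^\reg_k(1,n)$ has $v_1, \ldots, v_k$ linearly independent (this is an open dense condition when $k \le n$), and any such tuple is carried to $(e_1, \ldots, e_k)$ by a suitable element of $\GL(n)$. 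Thus $\GL(n) \cdot \gamma_0 \supseteq J^{\reg,\circ}_k(1,n)$, an open dense subset, so $\phi(\GL(n)\cdot\gamma_0) = \GL(n)\cdot p_{k,n}$ is dense in $\mathrm{im}(\phi)$, and passing to $\diff_k(1)$-orbits (which does not affect the image in the Grassmannian) and taking closures gives $\CHilb^{k+1}_0(\CC^n) = \overline{\GL(n) \cdot p_{k,n}}$.

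The main obstacle I anticipate is the careful bookkeeping in the identification $S_\xi = (\mathcal{S}^{k,1}_{f_\xi})^\perp$ — one must be precise about the duality pairing between $\mathfrak{m}/\mathfrak{m}^{k+1}$-truncated polynomials and the space $\symdot$ of differential operators of degree $\le k$, and check that ``$P$ vanishes on the jet $f_\xi$ to order $k+1$'' translates under this pairing to the system \eqref{modeleq} that cuts out $\mathcal{S}^{k,1}_\g$. This is essentially the content of Lemma~\ref{explgp} combined with \eqref{eq:dual}, but assembling it into the statement $\mathrm{im}(\phi) = \mathrm{im}(\varphi)$ requires matching conventions on both sides. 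Everything else — regularity of $\gamma_0$, density of the generic stratum when $k \le n$, and the passage to closures — is routine once this identification is in place, and indeed the excerpt already asserts $\mathrm{im}(\phi) = \mathrm{im}(\varphi)$, so in the written proof this step can simply be cited.
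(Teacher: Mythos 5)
Your proposal is correct and follows essentially the same route as the paper: the paper's own proof consists precisely of the identification $\mathrm{im}(\phi)=\mathrm{im}(\varphi)$ (via the duality $J_k(n,1)^*\cong\symdot$ and Remark~\ref{remark:embed}) followed by taking closures for part (1), and of Remark~\ref{remark:orbit} together with $\GL(n)$-equivariance of $\phi$ for part (2). Your additional details --- the bookkeeping matching $S_\xi=(\mathcal{S}^{k,1}_{f_\xi})^\perp$ with \eqref{modeleq}, and the density of the nondegenerate jets $\GL(n)\cdot\gamma_0$ in $\jetreg 1n$ when $k\le n$ --- simply make explicit what the paper leaves implicit.
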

The nested Hilbert scheme is defined as the set of flags of subschemes 
\[\NHilb^{k}(\CC^n)=\{(\xi_1 \subset \xi_2 \subset \ldots, \subset \xi_k): \xi_i \in \Hilb^i(\CC^n)\}\]
The curvilinear nested Hilbert scheme supported at the origin is  
\[\mathrm{CNHilb}^{k}_0(\CC^n)=\{(\xi_1 \subset \xi_2 \subset \ldots, \subset \xi_k): \xi_i \in \CHilb^i_0(\CC^n)\}.\]
The test curve model can be adapted for nested Hilbert schemes. For a complex vector space $V$ let 
\begin{equation}\label{def:flag}
\flag_{1,\ldots, k}(V)=\{(V_1 \subset \ldots V_{k}\subset V| \dim(V_i)=i\}
\end{equation}
denote the partial flag manifold of dimension vector $(1,2,\ldots, k)$. Let  
\[\phi^{\flag}: \jetreg 1n /\diff_k(1) \hookrightarrow \flag_\bk(J_k(n,1)^*).\]
be the embedding defined as $\gamma  \mapsto (\mathcal{S}^{1,1}_\g)^\perp \subset \ldots \subset (\mathcal{S}^{k,1}_\g)^\perp$.

\begin{corollary}[Test curve model for $\mathrm{CNHilb}^k_0(\CC^n)$, \cite{bercziG&T}]\label{bszmodelfiltered}
\begin{enumerate}
\item For any $k,n$  
\[\mathrm{CNHilb}^{k+1}_0(\CC^n)=\overline{\mathrm{im}(\phi^\flag)} \subset \flag_\bk(\symdot).\]
\item For $k\le n$ the $\GL(n)$-orbit of the reference flag
\[f_{k,n}=(p_{1,n} \subset p_{2,n} \subset \ldots \subset p_{k,n})\] 
forms a dense subset of the image $\jetreg 1n$ and therefore 
\[\mathrm{CNHilb}^{k+1}_0(\CC^n)=\overline{\mathrm{GL_n} \cdot f_{k,n}}.\] 
\end{enumerate}
\end{corollary}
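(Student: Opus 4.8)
\textbf{Proof proposal for Corollary \ref{bszmodelfiltered}.}

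The plan is to deduce both parts directly from Theorem \ref{bszmodel} by working levelwise, exploiting the fact that the flag-valued map $\phi^\flag$ is built from the Grassmannian-valued maps $\phi^\grass$ at every intermediate colength. First I would set up the natural diagram: the projections $\flag_\bk(\symdot) \to \grass_i(\symdot)$ recording the $i$-th step of a flag restrict, on $\mathrm{im}(\phi^\flag)$, to the maps $\gamma \mapsto (\mathcal{S}^{i,1}_\g)^\perp$, whose closures are $\CHilb^{i+1}_0(\CC^n)$ by Theorem \ref{bszmodel}(1). Since by Remark \ref{remark:orbit} the subspaces $(\mathcal{S}^{1,1}_\g)^\perp \subset (\mathcal{S}^{2,1}_\g)^\perp \subset \ldots \subset (\mathcal{S}^{k,1}_\g)^\perp$ are genuinely nested (each is spanned by the first $i$ vectors of the list $v_1, v_2+v_1^2, \ldots$), the image of $\phi^\flag$ lands in the closed subvariety of $\flag_\bk(\symdot)$ cut out by the incidence conditions together with the constraints that the $i$-th piece lie in $\CHilb^{i+1}_0(\CC^n) \subset \grass_i(\symdot)$; taking closures, $\overline{\mathrm{im}(\phi^\flag)} \subseteq \mathrm{CNHilb}^{k+1}_0(\CC^n)$.

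For the reverse inclusion — which is the substantive direction — I would argue that a general point of $\mathrm{CNHilb}^{k+1}_0(\CC^n)$ is a limit of honest curvilinear flags. A flag $(\xi_1 \subset \ldots \subset \xi_k)$ with each $\xi_{i+1} \in \CHilb^{i+1}_0(\CC^n)$ that is generic can be taken with $\xi_{i+1}$ itself curvilinear, hence of the form $\calo_{\xi_{i+1}} \simeq \CC[z]/z^{i+1}$; the key point is that a curvilinear subscheme of length $i+1$ contains a \emph{unique} subscheme of each smaller length, and a length-$k$ curvilinear $\xi_k$ with its induced chain of truncations realizes the flag. Thus the generic fibre of the forgetful map $\mathrm{CNHilb}^{k+1}_0(\CC^n) \to \CHilb^{k+1}_0(\CC^n)$ over the curvilinear locus is a single point, the flag of truncations, which is exactly $\phi^\flag(\gamma)$ when $\xi_k$ is cut out by the test curve $\gamma$. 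Hence $\mathrm{im}(\phi^\flag)$ is dense in each irreducible component of $\mathrm{CNHilb}^{k+1}_0(\CC^n)$ that dominates $\CHilb^{k+1}_0(\CC^n)$; a dimension count (the source $\jetreg 1n/\diff_k(1)$ has dimension $(n-1)k$, matching $\dim \CHilb^{k+1}_0(\CC^n)$ by the Lemma in \S \ref{sec:curvilinear}, and the curvilinear truncation chain adds nothing) shows there are no other components, giving equality.

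Part (2) then follows by the same reduction as in Theorem \ref{bszmodel}(2): for $k \le n$, Theorem \ref{bszmodel}(2) says the $\GL(n)$-orbit of $p_{k,n}$ is dense in $\mathrm{im}(\phi)$, i.e. a generic regular $k$-jet is $\GL(n)$-equivalent to $(e_1,\ldots,e_k)$ modulo reparametrisation. Applying $\phi^\flag$, which is $\GL(n)$-equivariant (the equivariance of $\phi$ in Theorem \ref{embedgrass} is compatible with all the truncations simultaneously, since $\GL(n)$ acts on $\symdot$ preserving the degree filtration), the orbit $\GL(n) \cdot f_{k,n}$ is dense in $\mathrm{im}(\phi^\flag)$, and taking closures and invoking part (1) yields $\mathrm{CNHilb}^{k+1}_0(\CC^n) = \overline{\GL_n \cdot f_{k,n}}$.

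I expect the main obstacle to be the reverse inclusion in part (1): one must be careful that \emph{every} component of the curvilinear nested Hilbert scheme is dominated by actual curvilinear flags rather than by limiting flags whose successive quotients degenerate incompatibly. The cleanest way around this is to note that curvilinearity is an open condition at every level (as in the Lemma of \S \ref{sec:curvilinear}), so the locus of flags all of whose truncations are curvilinear is open and dense in $\mathrm{CNHilb}^{k+1}_0(\CC^n)$, and on that locus the flag is rigidly determined by its top term; this reduces everything to the already-established Theorem \ref{bszmodel}.
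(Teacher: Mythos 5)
Your overall route is the intended one (the paper itself gives no written proof of this corollary: it is stated as an immediate adaptation of Theorem \ref{bszmodel}, the point being that a curvilinear subscheme has a unique subscheme of each smaller length, so the truncation flag of a curvilinear $\xi$ cut out along a test curve $\gamma$ is exactly $\phi^\flag(\gamma)$, and part (2) follows from $\GL(n)$-equivariance and the dense orbit of nondegenerate jets). Your forward inclusion, your identification of the fibre of the forgetful map over the curvilinear locus with the single truncation flag, and your deduction of part (2) from equivariance are all fine and match that reading.

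The gap is in how you close the reverse inclusion in part (1). You acknowledge the real issue --- that $\mathrm{CNHilb}^{k+1}_0(\CC^n)$, taken literally as the set of flags with every member in the (closed) curvilinear component, could have irreducible components on which some member is forced into the boundary $\CHilb^{i}_0(\CC^n)\setminus \mathrm{Curv}^{i}_0(\CC^n)$ --- but your two proposed fixes do not work. First, ``curvilinearity is open at every level, so the locus of flags all of whose truncations are curvilinear is open \emph{and dense}'' is a non sequitur: an open subset of a reducible closed set is dense only if it meets every irreducible component, which is precisely what has to be shown; on a hypothetical component lying over the boundary this open locus is empty. Second, the dimension count does not exclude such components: over a non-curvilinear $\xi_k$ the fibre of the forgetful map $\mathrm{CNHilb}^{k+1}_0(\CC^n)\to \CHilb^{k+1}_0(\CC^n)$ (choices of nested chains inside $\xi_k$ with each member in $\CHilb^{i}_0(\CC^n)$) can be positive-dimensional, so a component supported over the boundary could have dimension $\ge (n-1)k$, and in any case an irreducible component need not have maximal dimension to be a component. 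As written, your argument only proves that $\overline{\mathrm{im}(\phi^\flag)}$ is the union of those components of $\mathrm{CNHilb}^{k+1}_0(\CC^n)$ dominating $\CHilb^{k+1}_0(\CC^n)$. The statement becomes immediate if one reads $\mathrm{CNHilb}^{k+1}_0(\CC^n)$ (as in \cite{bercziG&T}) as the \emph{closure} of the locus of flags of truncations of curvilinear subschemes --- i.e.\ the closure of the nested curvilinear locus --- in which case your first two paragraphs already suffice and the density patch is unnecessary; with the literal ``all members in $\CHilb^{i}$'' definition, the absence of extra components is an additional claim that your proposal does not establish.
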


\subsection{Fibration over the flag manifold}\label{subsubsec:blowingup} 
In this section we assume that $k\le n$ holds, and introduce a fibration which plays crucial role in the intersection theory of the Hilbert scheme of points. In \cite{bsz,bercziG&T,bkGGL,berczitau2} this fibration allows us rewriting equivariant localisation to iterated residue formulas, which show some unexpected residue vanishing properties in the referred works. In this paper we use this fibration to identify distinguished monomial ideals in each fiber, which we will later call $T$-monomial ideals. 

Let $\jetnondeg 1n \subset \jetreg 1n$ denote the Zariski open set of jets $(\g',\g'' \ldots, \g^{[k]})$ with $\g',\ldots, \g^{(k)}$ linearly independent. These correspond to regular $n \times k$ matrices in $\Hom(\CC^k,\CC^n)$, and they fibre over the partial flag manifold $\flag_{1,\ldots, k}(\CC^n)$ defined in \eqref{def:flag}:
\[\jetnondeg 1n=\Hom^\reg(\CC^k,\CC^n) \to \Hom^\reg(\CC^k,\CC^n)/B_k=\flag_{1,\ldots, k}(\CC^n)\]
where $B_k \subset \GL(k)$ is the upper Borel. Since $\jetreg 11 \subset B_k$ this induces a surjective fibration
\begin{equation}\label{proj}
\pi:\jetnondeg 1n/\diff_k(1) \to \flag_{1,\ldots, k}(\CC^n).
\end{equation}
which factors through $\phi^\grass$ and $\phi^\flag$
\begin{equation}\label{proj2}
\xymatrix{\jetnondeg 1n/\diff_k(1) \ar[r]^-{\phi^\flag} \ar@/^2pc/[rr]^-{\phi^\grass} \ar[rd]^\pi & \flag_{1,\ldots, k}(\symdot) \ar@{-->}[d] \ar[r] & \grass_k(\symdot) \ar@{-->}[ld]\\
& \flag_{1,\ldots, k}(\CC^n)& & } 
\end{equation}
Here the vertical rational map is induced by the projection $\symdot \to \CC^n$ and the image of $\phi^\flag$ sits in its domain. We will work over a fixed flag $\ff$, and introduce the following partial blow-ups.

\begin{definition}\label{def:widetilde}  Fix a basis $\{e_1,\ldots, e_n\}$ of $\CC^n$ and let $B_{k,n} \subset \GL(n)$ denote the parabolic subgroup which preserves the flag 
\[\mathbf{f}=(\mathrm{Span}(e_1)   \subset \mathrm{Span}(e_1,e_2) \subset \ldots \subset \mathrm{Span}(e_1,\ldots, e_k) \subset \CC^n).\] 
\begin{enumerate} 
\item We let  
\[\widehat{\CHilb}_0^{k+1}(\CC^n)=\GL(n) \times_{B_{k,n}} \overline{B_{k,n}\cdot p_{k,n}}.\] 
be the fiberwise compactification in $\grass_k(\symdot)$. It is a partial resolution of $\CHilb^{k+1}_0(\CC^n)$:
\[\rho: \widehat{\CHilb}^{k+1}_0(\CC^n)=\GL(n) \times_{B_{k,n}} \overline{B_{k,n}\cdot p_{k,n}} \to \overline{\GL(n)\cdot p_{k,n}}=\CHilb^{k+1}_0(\CC^n).\]
\item Similarly, let 
\[\widehat{\NHilb}^{k+1}_0(\CC^n)=\GL(n) \times_{B_{k,n}} \overline{B_{k,n}\cdot f_{k,n}}\] 
be the fiberwise compactification of the nested Hilbert scheme in $\flag_{1,\ldots, k}(\symdot)$, with the partial resolution map:
\[\rho: \widehat{\NHilb}^{k+1}_0(\CC^n)=\GL(n) \times_{B_{k,n}} \overline{B_{k,n}\cdot f_{k,n}} \to \overline{\GL(n)\cdot f_{k,n}}=\NHilb^{k+1}_0(\CC^n).\]
\item Finally, let $T_k \subset \GL(n)$ be the $k$-dimensional torus acting on $\CC_{[k]}=\Span(e_1,\ldots, e_k)$ diagonally. We define the toric Hilbert scheme as the fiberwise compactification in $\grass_k(\symdot)$ of the torus orbit:
\[\widehat{\THilb}^{k+1}_0(\CC^n)=\GL(n) \times_{B_{k,n}} \overline{T_k\cdot p_{k,n}}\] 
\end{enumerate}
\end{definition}

Note that these partial resolutions are fibrations over $\GL(n)/B_{k,n}=\flag_{1,\ldots, k}(\CC^n)$, establishing the $\GL(n)$-equivariant diagram 
\begin{equation}\label{diagram:res}
\xymatrix{\widehat{\CHilb}^{k+1}_0(\CC^n)=\GL(n) \times_{B_{k,n}} \overline{B_{k,n}\cdot p_{k,n}} \ar@{->>}[r]^-\rho \ar[d]^{\pi^\Hilb}  &  \CHilb^{k+1}_0(\CC^n)=\overline{\GL(n)\cdot p_{k,n}}\\
\flag_{1,\ldots, k}(\CC^n)}
\end{equation}

Using the test curve map 
\[\phi_k : J_k^\reg(1,n) \to \grass_k(\symdot) \subset \PP(\wedge^k (\oplus_{i=1}^k \Symm^i \CC^n))\]
\[(0\neq v_1,v_2, \ldots, v_k) \mapsto [v_1 \wedge (v_2+v_1^2) \wedge \cdots \sum_{i_1+\dots+i_s = k} v_{i_1}\cdots v_{i_s}]\]
the fibers over the base flag $\ff=(\mathrm{Span}(e_1)   \subset \mathrm{Span}(e_1,e_2) \subset \ldots \subset \mathrm{Span}(e_1,\ldots, e_k) \subset \CC^n)$
can be explicitely described as follows: 
\begin{align}
    \widehat{\CHilb}^{k+1}_\ff=& \overline{B_{k,n} p_{k,n}}=\overline{\left\{\phi_k(\bv): v_i=b_{ii}e_i+b_{i-1,i}e_{i-1}+\ldots +b_{1i}e_1 \right\}} \label{fiberchilb}\\
    \widehat{\THilb}^{k+1}_\ff=&\overline{T_k p_{k,n}}=\overline{\left\{\phi_k(\bv): v_i=b_{ii}e_i \right\}} \label{fiberthilb}\\
    \widehat{\NHilb}^{k+1}_\ff=&\overline{B_{k,n} f_{k,n}}=\overline{\left\{[\phi_1(\bv)\subset \phi_2(\bv) \subset \ldots \subset \phi_k(\bv)]: v_i=b_{ii}e_i+\ldots +b_{1i}e_1 \right\}}
\end{align}
with $b_{ii}\neq 0$ for $1\le i \le k$.

\subsection{Partitions and coordinates on the test curve model}\label{subsec:partitions} We call an integer vector 
\[\pi=(i_1,\ldots, i_r) \in \ZZ^r \text{ with } 1\le i_1 \le \ldots \le i_r \le n\]
a partition of the sum $s(\pi)=i_1+\ldots +i_r$ of length $l(\pi)=r$. 
After fixing a basis $\{e_1,\ldots, e_n\}$ of $\CC^n$, a basis of $\wedge^k (\oplus_{i=1}^k \Symm^i \CC^n)$
is given by vectors of the form
\[e_{\pi_1} \wedge \ldots \wedge e_{\pi_k}\]
encoded by sets of partitions $\{\pi_1,\ldots \pi_k\}$ with $\pi_i \neq \pi_j$ and $l(\pi_i) \le k$ for all $1\le i,j \le k$. Here for a partition $\tau=(i_1,\ldots, i_r)$ we use the notation $e_\tau=\prod_{t \in \tau}e_t \in \sym^r \CC^n$, and we associate a unit box $\boxx(\tau)$ centered at $e_{i_1}+\ldots +e_{i_r} \in \ZZ^n$. This way the $k$ boxes $\{\boxx(\pi_1),\ldots, \boxx(\pi_k)\}$ corresponding to the set $\{\pi_1,\ldots, \pi_k\}$ encode the basis vector $e_{\pi_1} \wedge \ldots \wedge e_{\pi_k}$, see Figure \ref{figure:young}. 

We call the set $\{\boxx(\pi_1),\ldots, \boxx(\pi_k)\}$ a Young diagram (or staircase) if 
\begin{equation}\label{young}
(i_1,\ldots, i_r)\in \pi,\ r>1 \Rightarrow (i_1,\ldots,\hat{i}_j, \ldots, i_r) \in \pi \text{ for all } 1\le j \le r.
\end{equation}
In \cite{bsz} we gave an alternative, combinatorial characterization of Young diagrams as follows. 
\begin{definition}
    We call the set of partitions $\pi=\{\pi_1,\ldots,\pi_k\}$ complete, if for any $l \in \{1,\ldots, k\}$ and any proper sub-partition $\tau \subsetneq \pi_l$ there is a $j \in \{1,\ldots, k\}$ such that $\pi_j=\tau$.
\end{definition}
\begin{lemma}[\cite{bsz}]
    The set of partitions $\pi=\{\pi_1,\ldots, \pi_k\}$ is complete if and only if the associated set of boxes $\boxx(\pi)=\{\boxx(\pi_1),\ldots, \boxx(\pi_k)\}$ is a Young diagram, that is, it satisfies \eqref{young}. 
\label{completeboxes}
\end{lemma}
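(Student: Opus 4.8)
My plan is to pass to multiplicity coordinates. For a partition $\tau=(i_1,\dots,i_r)$ write $m_j(\tau)$ for the number of parts of $\tau$ equal to $j$ and set $m(\tau)=\sum_j m_j(\tau)\,e_j\in\ZZ_{\ge 0}^k$; this is exactly the center of the box $\boxx(\tau)$, and since $\pi_i\neq\pi_j$ forces $m(\pi_i)\neq m(\pi_j)$, the configuration $\boxx(\pi)$ is faithfully recorded by the finite set $P=\{m(\pi_1),\dots,m(\pi_k)\}\subset\ZZ_{\ge 0}^k$. Under this dictionary, ``$\boxx(\pi)$ is a Young diagram'' says that $\{0\}\cup P$ is an order ideal for the coordinatewise partial order on $\ZZ_{\ge 0}^k$; and because decrementing the $j$-th coordinate of $m(\tau)$ is precisely deleting one part equal to $j$ from $\tau$, the downward-closure condition displayed just above the lemma (imposed only for partitions of length $r>1$) is nothing but: \emph{$\pi$ is closed under single-part deletion}. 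Thus, granting the standard fact that an order ideal in $\ZZ_{\ge 0}^k$ is detected by single unit steps, the lemma becomes the assertion that single-part-deletion closure of $\pi$ is equivalent to closure of $\pi$ under passage to arbitrary nonempty proper sub-partitions, i.e. to completeness.

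One implication is free: a complete set is in particular closed under single deletions, so $\boxx(\pi)$ is a Young diagram. For the converse I would induct. Assuming $\pi$ is closed under single deletions, I claim that for every $\pi_l\in\pi$ and every nonempty proper sub-partition $\tau\subsetneq\pi_l$ one has $\tau\in\pi$, by induction on $d=l(\pi_l)-l(\tau)\ge 1$. If $d=1$, then $\tau$ arises from $\pi_l$ by a single deletion and $l(\pi_l)\ge 2$, so $\tau\in\pi$ by hypothesis. If $d>1$, pick $j$ with $m_j(\tau)<m_j(\pi_l)$ (one exists since $\tau\subsetneq\pi_l$), delete one part $j$ from $\pi_l$ to get $\pi_l'$; then $\pi_l'\in\pi$ by hypothesis (again $l(\pi_l)\ge 2$), $\tau$ is still a nonempty proper sub-partition of $\pi_l'$, and $l(\pi_l')-l(\tau)=d-1$, so induction applied to $\pi_l'\in\pi$ gives $\tau\in\pi$. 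The point that makes this go through is that the single-deletion hypothesis is available at \emph{every} member of $\pi$ met along the chain $\pi_l\supsetneq\pi_l'\supsetneq\cdots\supsetneq\tau$, not merely at the original $\pi_l$.

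This argument uses neither $|\pi|=k$ nor $s(\pi_i)\le k$; those hypotheses only ensure that $\boxx(\pi)$ has the size and location to arise from the test curve model. The one subtlety to flag is the treatment of the empty partition: ``completeness'' must be read with $\tau$ nonempty (otherwise no set could ever be complete), which is consistent with the displayed Young-diagram condition only constraining partitions of length $r>1$ and never demanding that the origin correspond to some $\pi_j$ — the origin appears only in the auxiliary order ideal $\{0\}\cup P$. Beyond this bookkeeping I do not anticipate a real obstacle: the entire content is that single-step downward closure propagates to full downward closure, repackaged in the language of partitions and boxes.
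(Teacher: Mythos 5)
Your argument is correct. The paper itself gives no proof of this lemma (it is quoted from \cite{bsz}), so there is nothing to compare line by line; but your reduction to multiplicity vectors in $\ZZ_{\ge 0}^k$ and the induction on $l(\pi_l)-l(\tau)$ --- showing that closure under single-part deletions propagates to closure under arbitrary nonempty proper sub-partitions, using the deletion hypothesis at each intermediate member of $\pi$ --- is exactly the standard ``single-step downward closure implies order ideal'' argument the citation relies on, and your handling of the empty partition and of the origin box is consistent with the paper's conventions.
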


Assume that $\{e_1,\ldots, e_n\}$ is an eigenbasis of $\CC^n$ for the action of the torus $T^n \subset \GL(n)$.  Since $\phi^\grass$ is $\GL(n)$-equivariant, torus fixed points under the $T^n \subset \GL(n)$ action on $\Hilb^{k+1}(\CC^n)$ correspond to monomial ideals 
\[\mon \in \CHilb^{k+1}_0(\CC^n) \subset \PP(\wedge^k (\oplus_{i=1}^k \Symm^i \CC^n))\]
which correspond to Young diagrams of boxes, that is complete sets of partitions $\pi=\{\pi_1,\ldots, \pi_k\}$. 

\begin{definition} For a monomial ideal $\mon \subset \CC[x_1,\ldots, x_n]$ we will denote by $\pi^\mon=\{\pi^\mon_1,\ldots, \pi^\mon_k\}$ the corresponding complete set of boxes/partitions. Conversely, the complete set $\pi=\{\pi_1,\ldots, \pi_k\}$ corresponds to the torus fixed point 
\[[e_\pi]=[e_{\pi_1} \wedge \ldots \wedge e_{\pi_k}]\in \PP(\wedge^k (\oplus_{i=1}^k \Symm^i \CC^n)\]
and we denote the corresponding monomial ideal by $\mon^\pi$. 
\end{definition}

By fixing an order of the basis elements in the eigenbasis $\{e_1,\ldots, e_n\}$, we can restrict our attention to the corresponding fiber of $\pi^\Hilb$ in Diagram \eqref{diagram:res}. By \eqref{fiberchilb} and \eqref{fiberthilb}, the fibers over $\ff$ sit in smaller subspaces, namely
\[\widehat{\CHilb}^{k+1}_\ff \subset \PP_\ff=\Span(e_{\pi_1}\wedge \ldots \wedge e_{\pi_k}: s(\pi_i) \le i \text{ for }i=1,\ldots, k)\]
and 
\[\widehat{\THilb}^{k+1}_\ff \subset \PP^T_\ff=\Span(e_{\pi_1}\wedge \ldots \wedge e_{\pi_k}: s(\pi_i) = i \text{ for }i=1,\ldots, k).\]
Following \cite{bsz}, we introduce new terminology for the combinatorial conditions appearing in $\PP_\ff$ and $\PP^T_\ff$. 
\begin{definition}\label{def:admtoric}
\begin{enumerate}
    \item We call a sequence of partitions $\pi=(\pi_1,\ldots, \pi_k)$ admissible if $\pi_i \neq \pi_j$ and $s(\pi_i)\le i$ for all $1\le i<j \le k$. We call the corresponding torus fixed point $[e_\pi]$ admissible with respect to reference flag $\bff$ defined in Definition \ref{def:widetilde}. 
    \item We call $\pi$ toric if $s(\pi_i)= i$ for all $1\le i \le k$, and we call the corresponding torus fixed point $[e_\pi]$ is admissible with respect to $\ff$. Note that toric sequences are automatically admissible. 
\end{enumerate}    
We denote by $\calP^\ff_k$ resp. $\calP^T_k$ the set of length $k$ admissible resp. toric sequences. We denote by $\PP_\ff$ (resp. $\PP^T_\ff$) the subspaces of $\PP(\wedge^k (\oplus_{i=1}^k \Symm^i \CC^n)$ spanned by admissible (resp. toric) bases. Note, in particular, that $e_{k+1},\ldots, e_n$ do not appear in admissible and toric sequences due to the $s(\pi_i)\le i$ condition.  
\end{definition}
With this terminology \eqref{fiberchilb} and \eqref{fiberthilb} can rewritten as  
\[\widehat{\CHilb}^{k+1}_\ff=\overline{\{[\Sigma_{\pi \in \calP_k^\ff} b_\pi e_\pi] : b_{ii} \in \CC^*, b_{ij} \in \CC\}}\]
and 
\begin{equation}\label{thilb}
\widehat{\THilb}^{k}_\ff=\overline{\{[\Sigma_{\pi \in \calP^T_k} b_\pi e_\pi] : b_{ii} \in \CC^*\}}.
\end{equation}
From this description we deduce the following 
\begin{lemma}\label{lemma:crucial} 
\begin{enumerate}
    \item If $\mon \in \widehat{\CHilb}^{k+1}_\ff$ then the set $\pi^\mon$ has an ordering which is admissible. %with respect to $\ff$.
    \item If $\mon \in \widehat{\THilb}^{k+1}_\ff$ then $\pi^\mon$ has an ordering which is toric, and this ordering is in fact unique.
\end{enumerate}
\end{lemma}

\begin{figure*}[h]
    \centering
    \begin{subfigure}[b]{0.2\textwidth}
        \centering
        \begin{ytableau}
         2   \\
 	& 1& \none & 1^3 
	\end{ytableau}
        \caption{The box diagram of \textbf{non-complete} $e_1 \wedge e_2 \wedge e_1^3 \in \wedge^3(\sym^{\le 3}\CC^2)$.}
    \end{subfigure}
    \quad
    \begin{subfigure}[b]{0.2\textwidth}  
        \centering 
        \begin{ytableau}
        2^2    \\
	2   \\
 	& 1
	\end{ytableau}
        \caption{The Young diagram of \textbf{non-admissible} $e_1 \wedge e_2 \wedge e_2^2$ corresponding to monomial ideal $(x^2,xy,y^3)\in \Hilb^4_0(\CC^2)$}
    \end{subfigure}
    \quad
    \begin{subfigure}[b]{0.2\textwidth}  
        \centering 
        \begin{ytableau}
	2   \\
 	& 1& 1^2 
	\end{ytableau}
        \caption{The Young diagram of the \textbf{admissible} $e_1 \wedge e_1^2 \wedge e_2$ corresponding to monomial ideal $(x^3,xy,y^2)\in \widehat{\CHilb}^4_\ff(\CC^2)$}
    \end{subfigure}
    \quad
    \begin{subfigure}[b]{0.2\textwidth}  
        \centering 
        \begin{ytableau}
        2  & 12 \\
 	& 1
	\end{ytableau}
        \caption{The \textbf{toric} fixed point $e_1 \wedge e_2 \wedge e_1e_2$ corresponding to monomial ideal $(x^2,y^2)\in \widehat{\THilb}^4_\ff(\CC^2)$}
    \end{subfigure}
\caption{}
\label{figure:young}
\end{figure*}
\begin{example}
The affine coordinates over the flag $\ff=(e_1 \subset \langle e_1,e_2\rangle \subset \langle e_1,e_2,e_3\rangle \subset \CC^n) \in \flag_3(\CC^n)$ are
\begin{align*}
v_1=& b_{11}e_1 \\
v_2= &b_{22}e_2+b_{12}e_1\\
v_3= &b_{33}e_3+b_{23}e_2+b_{13}e_1
\end{align*}
where $\b_{11}\b_{22}\b_{33} \neq 0$. Hence $\widehat{\CHilb}^{4}_\ff(\CC^n)$ is the closure of the locus
\begin{align*}
    \{[v_1 \wedge (v_2+v_1^2) \wedge (v_3 + 2v_1v_2+ v_1^3)]\}=& \{[b_{11}b_{22}b_{33} e_1 \wedge e_2 \wedge e_3 + b_{11}^3b_{33}e_1 \wedge e_1^2 \wedge e_3 +\\ & (b_{11}^3b_{23}-b_{11}^2 b_{12} b_{22}) e_1 \wedge e_1^2 \wedge e_2+\\ 
 &  b_{11}^2\b_{22}^2 e_1 \wedge e_2 \wedge e_1e_2+ b_{11}^4 b_{22}e_1 \wedge e_2 \wedge e_1^3) + \\
 & +b_{11}^4b_{22}e_1 \wedge e_1^2 \wedge e_1e_2 +b_{11}^6 e_1 \wedge e_1^2 \wedge e_1^3]\} \nonumber
\end{align*}
in $\grass_3(\sym^{\le 3}\CC^n)$. The toric Hilbert scheme $\widehat{\THilb}^{4}_\ff(\CC^n)$ has a simpler form with $v_i=b_{ii}e_i$, it is the closure of the locus 
\begin{multline}
    \{[b_{11}b_{22}b_{33} e_1 \wedge e_2 \wedge e_3 + b_{11}^3 b_{33}e_1 \wedge e_1^2 \wedge e_3+ b_{11}^2b_{22}^2 e_1 \wedge e_2 \wedge e_1e_2+\\
    +b_{11}^4b_{22} (e_1 \wedge e_2 \wedge e_1^3 + e_1 \wedge e_1^2 \wedge e_1e_2) + b_{11}^6 e_1 \wedge e_1^2 \wedge e_1^3]\} \nonumber
\end{multline}
\end{example}

%\begin{remark} The converse of Lemma \ref{lemma:crucial} (2) is true, and we prove it in the next section. However, 
%The converse of (1) is not true: there are admissible sequences $\pi=(\pi_1,\ldots, \pi_k)$  w.r.t $\ff$, such that the set $\pi=\{\pi_1,\ldots,\pi_k\}$ represents the monomial ideal $\mon^\pi \notin \widehat{\CHilb}^{k+1}_\ff$, but $\mon^\pi \in \widehat{\CHilb}^{k+1}_{\ff'}$ for an other flag $\ff'$, corresponding to an other order of the basis of $\CC^n$.

A key geometric feature of the diagram \eqref{diagram:res} is that there are monomial ideals $\mon \in \CHilb^{k+1}_0(\CC^n)$ such that $\rho^{-1}(\mon)$ consists of various monomial ideals sitting over different flags. A simple example is $\mon=(x^2,y^2) \in \Hilb^4(\CC^2)$ and the flags 
\[\ff=(\langle e_1 \rangle \subset \langle e_1,e_2 \rangle=\CC^2) \text{ and } \ff'=(\langle e'_1 \rangle \subset \langle e'_1,e'_2 \rangle =\CC^2).\] 
where $e_1'=e_2$ and $e_2'=e_1$. Here $\rho^{-1}(\mon) \in \widehat{\CHilb}^{4}_\ff(\CC^2) \cap \widehat{\CHilb}^{4}_{\ff'}(\CC^2)$.
\begin{definition}\label{def:Tmonomial}
    We call a monomial ideal $\mon \in \Hilb^{k+1}_0(\CC^n)$ T-monomial with respect to the torus-fixed flag $\ff=(\langle e_1 \rangle \subset \ldots \subset \langle e_1,\ldots, e_k\rangle \subset \CC^n)$, if the set $\pi^\mon$ has an ordering which is a toric sequence.
\end{definition}
Figure \ref{figure:T-monomial} shows examples of $T$-monomial and not $T$-monomial ideals. 
\begin{figure*}[h]
    \centering
    \begin{subfigure}[b]{0.4\textwidth}
        \centering
        \begin{ytableau}
	2 & 12  \\
 	& 1
	\end{ytableau}
        \caption{T-monomial ideal $(x^2,y^2)$ corresponding to toric sequence ([1],[2],[12])}
    \end{subfigure}
    \quad
    \begin{subfigure}[b]{0.4\textwidth}  
        \centering 
        \begin{ytableau}
	2   \\
 	& 1& 1^2 
	\end{ytableau}
        \caption{$\mon=(x^3,xy,y^2)$ is not $T$-monomial: neither $([1],[1^2],[2])$
        nor $([1],[2],[1^2])$ is toric (although they are both admissible)}
    \end{subfigure}
\caption{}
\label{figure:T-monomial}
\end{figure*}
A crucial step in our proof of Theorem \ref{main1} is to show the converse of Lemma \ref{lemma:crucial} (2): all $T$-monomial ideals in $\Hilb^{k+1}(\CC^n)$ with respect to $\ff$ sit in $\widehat{\THilb}^{k+1}_\ff(\CC^n)$

%\begin{remark} Equivalently, let $\pi: J_k(n,1)^* \simeq \symdot=\oplus_{i=1}^k\mathrm{Sym}^i\CC^n \to \CC^n$ denote the projection to the first (linear) factor. Then 
%\begin{equation}\nonumber
%\tildeCFHilb^\bk_0(E)=\{((\mathcal{S}^1_{\gamma})^\perp \subset \ldots \subset (\mathcal{S}^k_{\gamma})^\perp)),(V_1\subset \ldots \subset V_k): \pi(\mathcal{S}^i_{\gamma})^\perp) \subset V_i\} \subset \CFHilb^\bk_0(E) \times \flag_k(\CC^n)
%\end{equation}
%with the resolution map $\rho: \widetilde{CX}^{[k+1]}_p \to \widehat{CX}^{[k+1]}_p$ given by $\rho(g,z)=g\cdot z$.
%\end{remark}

\section{Proof of the main theorem}

The proof of Theorem \ref{main1} has three main steps. 
First, in \S \ref{subsec:step1} we prove that all $T$-monomial ideals with respect to the flag $\ff$ sit in the $\ff$-toric Hilbert scheme $\widehat{\THilb}_\ff(\CC^n)$, which is the converse of Lemma \ref{lemma:crucial} (2). We use the test curve model introduced in the previous section to carefully analyse the boundary points of the image of $\phi^\grass$. We give an alternative formulation of the proof using toric resolutions of the test curve map.

Next, in \S \ref{subsec:trivialextension} we define the socle extension of a finite dimensional algebra, which roughly corresponds to adding a new nilpotent direction to the algebra, and results an extension map  
\[\epsilon: \Hilb^k_0(\CC^n) \to \Hilb^{k+1}_0(\CC^{n+1}),\ \ A \mapsto \epsilon(A)\]
with an inverse on the image $q: \im(\epsilon) \to \Hilb^k_0(\CC^n)$ such that $(q \circ \epsilon)(A) \simeq A$.
We prove the following crucial descent result (Theorem \ref{thm:descent}) for socle extensions:
\[\epsilon(A) \in \CHilb^{k+1}_0(\CC^{n+1}) \Rightarrow A \in \CHilb^{k}_0(\CC^n).\]
The final step in \S \ref{subsec:trivialextension} is purely combinatorial: we associate to any admissible complete set of partitions $\pi^\mon=\{\pi_1,\ldots, \pi_k\}$ corresponding to the monomial ideal $\mon \in \CHilb^{k+1}_0(\CC^n)$ a toric sequence  $\pi^+=\{\pi^+_1,\ldots, \pi^+_{K}\}$ over some extended flag $\ff^+\in \flag_{1,\ldots, K}(\CC^{K})$ for some $K\ge k$, such that the corresponding monomial ideal $\mon^+=\mon^{\pi^+}\in \Hilb^{K+1}_0(\CC^K)$ satisfies that $A^+=\CC[x_1,\ldots, x_K]/\mon^+$ is an iterated socle extension of an algebra $A$ which is isomorphic to $\CC[x_1,\ldots, x_n]/\mon$. %\[\mon^+=\epsilon^m(\underline{\mon}) \text{ for some } m\ge 0 \text{ and } \underline{\mon} \simeq \mon.\] 

Then the proof of Theorem \ref{main1} in \S \ref{subsec:proof} goes as follows: by the first step we conclude that $\mon^{+}\in \widehat{\THilb}^{K+1}_{\ff^+}(\CC^{K})$ sits in the toric Hilbert scheme, and in particular, in the curvilinear Hilbert scheme, and by the descent theorem $\mon \in \CHilb^{k+1}_0(\CC^n)$.

\subsection{T-monomial ideals}\label{subsec:step1} In this section we prove 
\begin{theorem}\label{main1toric} Let $\mon \in \Hilb^{k+1}_0(\CC^n)$ be a $T$-monomial ideal with respect to the flag $\ff$ as in Definition \ref{def:Tmonomial}. Then $\mon \in \widehat{\THilb}^{k+1}_\ff(\CC^n)$.
\end{theorem}
We present two proofs using slightly different languages. The second proof was the starting point of this work, whose idea was based on the non-reductive GIT construction of \cite{bkcoh, berczithom}, although we do not use this model here.  
\begin{proof}
Let $\pi=(\pi_1,\ldots \pi_k)$ be a toric sequence of partitions with respect to $\ff$, representing the T-monomial ideal $\mon^\pi=[e_\pi]=[e_{\pi_1}\wedge \ldots \wedge e_{\pi_k}] \in \PP_\ff^T$. Recall that a sequence of partitions $\pi=(\pi_1,\dots,\pi_k)$ is toric if $s(\pi_i)=i$ for $1\le i \le k$. We will construct a (rational) $1$-parameter subgroup 
\[\lambda(t)=\begin{pmatrix}
    z^{\a_1} & & \\ & \ddots & \\ && z^{\a_k} 
\end{pmatrix}\]
with $\a_1,\ldots, \a_k \in \QQ$ such that $\lim_{z \to \infty} \phi(z^{\a_1}e_1,\ldots, z^{\a_k}e_k)=[e_\pi].$
We start with the construction for $k=2$. There are only two complete toric sequences: 
\[\pi=([1],[2]) \text{ and } \pi'=([1],[11])\]
corresponding to the points 
\[e_{\pi}=e_1 \wedge e_2 \text{ and } e_{\pi'}=e_1 \wedge e_1^2\]
Here 
\[\phi(z^{\a_1}e_1,z^{\a_2}e_2)=[z^{\a_1+\a_2}e_1 \wedge e_2 +z^{3\a_1}e_1 \wedge e_1^2] \in \PP_\ff.\]
Hence the limit point is $e_{\pi}$ if $\a_2>2\a_1$, and it is $e_{\pi'}$ if $\a_2<2\a_1$. Assume now that 
\begin{equation}\label{k=2}
\a_2>2\a_1
\end{equation}
and let $\pi=([1],[2],[\pi_3])$ be a complete toric sequence. These are 
\[([1],[2],[12]) \text{ and } ([1],[2],[3]).\]
The coefficients of these in $\phi(z^{\a_1}e_1,z^{\a_2}e_2,z^{\a_3}e_3)$ are $z^{\a_1+\a_2}\cdot z^{\a_1+\a_2}$ and $z^{\a_1+\a_2}\cdot z^{\a_3}$ respectively. Hence for any solution of the system of linear inequalities 
\begin{eqnarray*}
\a_2>2\a_1 \\
\a_1+\a_2 > \a_3
\end{eqnarray*}
the limit is $([1],[2],[12])$, and for a solution of  
\begin{eqnarray*}
\a_2>2\a_1 \\
\a_1+\a_2 < \a_3
\end{eqnarray*}
the limit point is $([1],[2],[12])$. We see that due to the independent variable $\a_3$, both systems have solutions. 
We follow this argument in general, and prove that the completeness of the sequence $\pi$ guarantees that there is a solution for the corresponding system of linear inequalities which gives $e_\pi$ as limit point. Assume we proved this for all complete sequences of length at most $k-1$, and let $\pi=(\pi_1,\ldots, \pi_k)$ be a complete toric sequence. The coefficient of $e_{\pi}=e_{\pi_1} \wedge \ldots \wedge e_{\pi_k}$ in $\phi(z^{\a_1}e_1, \ldots ,z^{\a_k}e_k)$ is $z^{\sum_{l=1}^k \a_{\pi_l}}$ where $\a_{\pi_l}=\sum_{i\in \pi_l}\a_i$ is a linear form. Hence the system of linear inequalities whose solutions provide the limit $e_{\pi}$ is 
\[\a_{\pi_l} > \a_{\pi'_l} \text{ for all } 1\le l \le k \text{ and } \pi'_l \in \Pi_l \tag{(1)-(k)}\]
where 
\[\Pi_l=\{(i_1,\ldots, i_r)\in \ZZ^r:1\le i_1\le \ldots \le i_r \le k, i_1+\ldots+i_r=l\}\]
By assumption we know that the first $k-1$ inequalities have a nonempty solution set. Assume that (1)-(k) does not have solution. This means that there is a $\pi'_k \in \Pi_k$ such that 
\[\a_{\pi_l} > \a_{\pi'_l} \text{ for all } 1\le l \le k-1 \text{ and } \pi'_l \in \Pi_l \tag{(1)-(k-1)}\]
implies that 
\[\a_{\pi_k}=\sum_{j \in \pi_k}\a_j \le \sum_{j' \in \pi'_k}\a_{j'}=\a_{\pi'_k}. \tag{(k)}\]
This can happen in two ways. First, if $\pi_k=[k]$ then inequality (k) contains the new variable $\a_k$ on the left hand side but neither in $\pi_k'$, nor in (1)-(k-1), so with appropriate choice $\a_k$ we get a solution. 

Otherwise, there must be a decomposition 
\[\pi_k=\tau_1 \cup \tau_2 \text{ and } \pi'_k=\tau'_1 \cup \tau'_2\]
into nontrivial sub-partitions such that w.l.o.g. $\tau_1 \neq \tau'_1$ and  
\[1< s(\tau_1)=s(\tau'_1) <k \text{ and hence } 1<s(\tau_2)=s(\tau'_2) <k.\]
and
%^\todo{Can you actually make sure that there is strict inequality for both sums? Of course it is enough to have for the first one...} 
\begin{equation*}
\sum_{j \in \tau_1}\a_j \le \sum_{j' \in \tau'_1}\a_{j'} \text{ and }
    \sum_{j \in \tau_2}\a_j \le \sum_{j' \in \tau'_2}\a_{j'}
\end{equation*}  
%with at least one being strict inequality. 
However, the first inequality contradicts to inequality (1)-(k-1) with $l=s(\tau_1)$ and $\pi'=\tau_1'$: indeed, by completeness of $(\pi_1,\ldots \pi_{k-1})$ we have $\pi_l=\tau_1$ and (1)-(k-1) gives $\a_{\tau_1}>\a_{\tau_1'}$.
\end{proof}

This proof can be reformulated using resolutions of the test curve map $\phi_k$. We present this reformulation here, which was originally motivated by \cite{berczithom}. Recall the toric Hilbert scheme from \eqref{fiberthilb}
\[\widehat{\THilb}^{k+1}_\ff=\overline{T_k p_{n,k}}=\overline{\left\{\phi_k(\bv): v_i=b_{ii}e_i \right\}} \subset \PP^T_\ff=\Span(e_{\pi}: \pi \text{ is toric}).\]
Here $\phi_k$ is the test curve model map $(b_1,\dots,b_k) \xmapsto{\phi_{k}} [\Sigma_{\pi \text{ toric }} b_\pi e_\pi]$
where we write $b_i=b_{ii}$ for simplicity, and for $\pi=(\pi_1,\ldots, \pi_k)$ we set $b_\pi = b_{\pi_1} \cdots b_{\pi_k}$, which is a monomial in the $b_i's$, see \eqref{thilb}. 
%and we have defined $b_\rho = b_{\rho_1}^{r_1} \cdots b_{\rho_l}^{r_l}$ for a partition $\rho = \rho_1^{r_1} \cdots \rho_l^{r_l}$.
%\subsubsection{Divisibility relations of monomials and hyperplanes}
%We prove that there is a resolution of the map $\phi_k$ whose sheaf of (vanishing) ideal is principal on every chart of the source space generated by the pullback of a monomial $b_\pi$ with $\pi$ a toric sequence. More importantly, we argue that for every toric sequence $\pi$ there is a chart
%on which the pullback of $b_\pi$ along this resolution becomes the generator of the pullback of the monomial ideal generated by all $b_{\pi'}$, which proves Theorem \ref{main1toric}. 
We will follow terminology of toric blow-ups, see e.g \cite{ToroidalEmbeddings}. 
We write $X_k := \Spec(S_k)$ for the affine part of the source space of $\phi_k$, where $S_k = \CC[b_i : 1 \leq i \leq k]$, of dimension $\Dim \, X_k = k$. In the remaining part of this section every sequence of partitions $\pi = (\pi_1,\dots,\pi_k)$ will be assumed toric, and we denote the set of these by $\calp^T_{k}$, see Definition \ref{def:admtoric}.

For a partition $\tau=(i_1,\ldots, i_r)=(1^{j_1},\ldots, k^{j_k})$ we denote by $v_\tau=(j_1,\ldots j_k)\in \ZZ^k$ the vector of exponents in the integer lattice with dual space $(\ZZ^k)^* \subset (\RR^k)^*$. Via the canonical pairing
$$ \ZZ^k \times (\ZZ^k)^* \to \ZZ , \quad (v,\mu) \mapsto \langle v, \mu \rangle $$
we introduce hyperplanes 
$$ H_{\pi,\pi'} = \{\mu \in (\RR^k)^* : \langle v_{\pi'} - v_{\pi} , \mu \rangle = 0\} \subset (\RR^k)^*$$
and write $H_{\pi,\pi'}^-=\{\mu: \langle v_\pi - v_{\pi'} , \mu \rangle \geq 0\}$. The hyperplanes $H_{\pi,\pi'}$ go through the origin and yield a division of $(\RR^k_{\geq 0})^*$ into strictly convex rational polyhedral cones (meaning that each cone contains no positive dimensional linear subspace, and is bounded by \emph{finitely many} hyperplanes); that is, we obtain a fan $F$ with support $(\RR^k_{\geq 0})^*$ and associated toric variety $\calT(F)$. Every such fan $F$ has a regular (meaning that all rays are in the integral lattice $(\ZZ^k)^*$) refinement $\tilde F$,  see Chapter 1, Theorem 11 of \cite{ToroidalEmbeddings}. Equivalently the associated toric variety $\calT(\tilde F)$ is nonsingular. We obtain the following resolution picture

$$
\xymatrix{
\calT(\tilde F) \ar[d] \ar[rdd]^-{\tilde\phi_{k}}  & \\
\calT(F) \ar[d] \ar[rd]^-{\bar\phi_{k}}   & \\
X_k \ar@{-->}[r]^-{\phi_k} & \widehat{\THilb}^{k+1}_\ff}
%\PP(\bigwedge^k \bigoplus_{i=1}^k \Symm^i \CC^n)
$$ 
where all maps are torus equivariant with respect to the torus $T^k$ acting on $S_k$ diagonally. 
%Even though the map $\bar\phi_{k}$ is resolved and so everywhere defined, but the toric variety $\calT(F)$ is not smooth (this is also not neccessary), and we remark merely that there is a smooth toric resolution $\calT(\tilde F) \to \calT(F)$. 

\begin{proposition}\label{toricInductionStep} Let $\pi=(\pi_1,\dots,\pi_k)$ be a toric sequence of partitions, and suppose that 
\begin{enumerate}
    \item $\bigcap_{(\pi'_1,\dots,\pi'_{k-1})\in \calp^T_{k-1}} H^-_{(\pi_1,\dots,\pi_{k-1}),(\pi'_1,\dots,\pi'_{k-1})} \cap (\RR_{\geq  0}^k)^* \neq \emptyset$
    \item $\bigcap_{\pi' \in \calp^T_k} H^-_{\pi,\pi'} \cap (\RR_{\geq  0}^k)^* = \emptyset.$ 
\end{enumerate}
Then $\pi$ is not complete.
\end{proposition}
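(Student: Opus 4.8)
The plan is to prove the contrapositive and to recognise the statement as the inductive step in the first proof of Theorem~\ref{main1toric}, recast in the language of the defining fan. First I would unwind the polyhedral conditions: identify a covector $\mu\in(\RR^k)^*$ with a weight vector $(\alpha_1,\dots,\alpha_k)$ and, for a toric sequence $\sigma=(\sigma_1,\dots,\sigma_m)$, set $\langle v_\sigma,\mu\rangle=\sum_{l=1}^m\alpha_{\sigma_l}$ with $\alpha_{\sigma_l}=\sum_{i\in\sigma_l}\alpha_i$. The key reduction is that, for a fixed toric $\sigma$, the inequalities $\langle v_\sigma-v_{\sigma'},\mu\rangle\ge 0$ over all toric $\sigma'$ of the same length are equivalent to the \emph{componentwise} inequalities $\alpha_{\sigma_l}\ge\alpha_\tau$ for all $l$ and all $\tau\in\calp_l$: replacing a single component $\sigma_l$ by another partition of the same size $l$ again yields a toric sequence (the part-sums $1,\dots,m$ are pairwise distinct, so no repetition is created), and conversely every total inequality is a sum of componentwise ones. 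Thus hypothesis (1) says the componentwise system at levels $1,\dots,k-1$ attached to $\pi$ is solvable in the orthant, and hypothesis (2) says the same system extended to level $k$ is not --- i.e.\ the linear system labelled $(1)$--$(k)$ of the first proof has no solution.

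Next I would argue by contradiction, adding the hypothesis that $\pi$ is complete, and show that hypothesis (1) then forces a solution at level $k$, contradicting hypothesis (2). Fix a solution $\mu^0$ of the level-$(k-1)$ system; the new constraints are $\alpha_{\pi_k}\ge\alpha_{\pi'_k}$, $\pi'_k\in\calp_k$. If $\pi_k=[k]$, the coordinate $\alpha_k$ occurs only on the left of these (every competitor $\pi'_k\ne[k]$ has all parts $<k$), so enlarging the $k$-th coordinate of $\mu^0$ solves the level-$k$ system. If $\pi_k\ne[k]$, then $\pi_k$ has at least two parts; taking the $k$-th coordinate small handles the competitor $[k]$ (since $\alpha_{\pi_k}\ge 0$), and one must dominate the remaining $\pi'_k$. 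Here completeness is the engine: every proper sub-partition of $\pi_k$ appears among $\pi_1,\dots,\pi_{k-1}$, the one of size $a$ being $\pi_a$, so for any splitting of the parts of $\pi_k$ into blocks of sizes $a$ and $k-a$ one has $\alpha_{\pi_k}=\alpha_{\pi_a}+\alpha_{\pi_{k-a}}$; splitting the parts of $\pi'_k$ into blocks of the same two sizes gives $\alpha_{\pi'_k}=\alpha_\tau+\alpha_{\tau'}$ with $\tau\in\calp_a$, $\tau'\in\calp_{k-a}$, whence the level-$(k-1)$ inequalities already yield $\alpha_{\pi'_k}\le\alpha_{\pi_a}+\alpha_{\pi_{k-a}}=\alpha_{\pi_k}$. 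The competitors not killed by such a common block-matching are dealt with exactly as in the first proof: one isolates a decomposition $\pi_k=\tau_1\cup\tau_2$, $\pi'_k=\tau'_1\cup\tau'_2$ with $1<s(\tau_1)=s(\tau'_1)<k$ and $\tau_1\ne\tau'_1$, and since completeness forces $\tau_1=\pi_{s(\tau_1)}$, any inequality $\alpha_{\tau_1}\le\alpha_{\tau'_1}$ that would be imposed on the whole level-$(k-1)$ solution set contradicts the inequality $\alpha_{\pi_{s(\tau_1)}}\ge\alpha_{\tau'_1}$ already contained in that system. This contradiction shows $\pi$ is not complete.

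The main obstacle is the case $\pi_k\ne[k]$. Hypothesis (1) supplies only \emph{one} point of the level-$(k-1)$ cone, whereas one needs a point at which the ``high'' coordinates $\alpha_j$ (with $j$ close to $k$) sit near their lower bounds, so that the upper bounds those coordinates receive from the level-$k$ competitors are not violated; equivalently, one must pass from ``for every admissible weight some competitor obstructs'' to ``some competitor obstructs every admissible weight'', which is not automatic for finitely many half-spaces and needs linear-programming duality on the (nonempty) cone from (1). The cleanest way around this, which also matches how the proposition is used, is to carry out the whole argument as an induction on $k$ and to strengthen the assertion to: \emph{if $\pi$ is complete then its domination cone $\bigcap_{\pi'}H^-_{\pi,\pi'}\cap(\RR^k_{\geq 0})^*$ is full-dimensional}; this gives room to move $\mu^0$, and the compatibility of the new upper-bound constraints with the old lower-bound ones is exactly what completeness provides through the identification $\tau\subsetneq\pi_k\Rightarrow\tau=\pi_{s(\tau)}$. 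The remaining ingredients --- the componentwise reduction, the $\pi_k=[k]$ case, and the block-matching estimates --- are routine.
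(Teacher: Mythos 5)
You take essentially the same route as the paper: Proposition \ref{toricInductionStep} is the fan-language form of the inductive step in the first proof of Theorem \ref{main1toric}, and your translation back into the weight inequalities $\alpha_{\pi_l}\ge\alpha_\tau$ --- the componentwise reduction, the case $\pi_k=[k]$ handled by the fresh variable $\alpha_k$, and the use of completeness through $\tau\subsetneq\pi_k\Rightarrow\tau=\pi_{s(\tau)}$ --- is exactly the paper's argument. You also correctly locate the delicate point: hypothesis (2) only says that every point of the level-$(k-1)$ cone is obstructed by \emph{some} competitor, while the argument wants a \emph{single} competitor $\pi'_k$ obstructing the whole cone; the paper's proof asserts this quantifier exchange without justification, and so does its first proof of Theorem \ref{main1toric}.

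However, your proposed repair does not close that gap, and one of its ingredients fails as stated. The block-matching step presumes that a competitor $\pi'_k$ with at least two parts can be split into blocks whose sizes match a splitting of $\pi_k$; this is false in general: for the complete toric sequence $\pi=((1),(2),(3),(2,2))$ and the competitor $\pi'_4=(1,3)$, the partitions $(2,2)$ and $(1,3)$ have no proper sub-blocks of equal size, so neither your block-matching nor the fallback decomposition with $1<s(\tau_1)=s(\tau'_1)<k$ exists for this pair (the same difficulty afflicts the paper's asserted existence of equal-size subpartitions $\delta\subset\pi_k$, $\delta'\subset\pi'_k$). Moreover, for this pair the new level-$4$ inequality $2\alpha_2\ge\alpha_1+\alpha_3$ is \emph{not} implied by the level-$\le 3$ system (the point $\alpha=(0,0,1)$ satisfies $\alpha_2\ge 2\alpha_1$ and $\alpha_3\ge\alpha_1+\alpha_2$ but violates it), so full-dimensionality of the level-$(k-1)$ cone alone cannot give nonemptiness once the new cuts on the old variables are imposed: the cone genuinely shrinks, and one must prove the shrunken cone is still nonempty by choosing the weight appropriately, not by deriving the new inequality from the old ones. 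The linear-programming-duality argument you announce is precisely what would be needed here, but it is only gestured at, and declaring the remaining ingredients routine is not warranted. In fairness, this is the very step at which the paper's own proof is thinnest, so your proposal reproduces the paper's argument in substance but, like it, leaves the crucial quantifier-exchange/feasibility step unproved.
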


\begin{proof} Assume for contradiction that the two conditions of the proposition hold, and $\pi$   
is complete.
By (2) there must exist a toric sequence $\pi' = (\pi_1,\dots,\pi_{k-1},\pi'_k)$ with $\pi_k' \neq \pi_k$ such that
$$ \mu \in \bigcap_{(\pi'_1,\dots,\pi'_{k-1})} H^-_{(\pi_1,\dots,\pi_{k-1}),(\pi'_1,\dots,\pi'_{k-1})} \cap (\RR_{\geq  0}^k)^* \implies \langle \mu, v_{\pi_k}-v_{\pi'_k} \rangle > 0.$$
It follows that in $\pi_k$ and $\pi'_k$ we have subpartitions $\d$ in $\pi_k$ and $\d'$ in $\pi'_k$ with $s(\d) = s(\d')<k$ and satisfying
$$ \mu \in \bigcap_{(\pi'_1,\dots,\pi'_{k-1})} H^-_{(\pi_1,\dots,\pi_{k-1}),(\pi'_1,\dots,\pi'_{k-1})} \cap (\RR_{\geq  0}^k)^* \implies \langle \mu, v_{\d'}-v_{\d} \rangle < 0.$$
and we conclude that $\d \neq \pi_1,\dots,\pi_{k-1}$.

\end{proof}

% \begin{proposition}
% 	Let $\pi$ be a complete toric sequence of partitions. There exists $\mu \in N_k \cap (\RR_{\geq 0}^D)^*$ satisfying 
% $$\mu \in \bigcap_{\pi'} H^-_{\pi,\pi'}$$
% the intersection being taken over all toric sequences $\pi'$.
% \label{toricChamber}
% \end{proposition}

%In other words the proposition contains the result that after subdividing $(\RR_{\geq 0})^*$ by the hyperplanes $H^-_{\pi,\pi'}$, there is at least one chamber (that is, cone) in which a given complete toric sequence of partitions $\pi$ is dominant. Dominant means here exactly that the monomial $b_\pi$ pulls back along $\calT(F) \to X_k$ to become the generator of the pullback of the monomial ideal generated by all $b_{\pi'}$ on the chart corresponding to this cone. This description yields at once the next result.\todo{Remove?}

%Recall that $[e_\pi]$ is our notation for the projective point corresponding to the line spanned by $e_\pi$ in $\bigwedge^k \bigoplus_{i=1}^k \Symm^i \CC^n$.

%\begin{theorem}
%	For any complete toric sequence $\pi$ we have $[e_\pi] \in \overline{\text{Im } \phi_{n,k}^0} \subset \CHilb_0^{k+1}(\CC^n)$
%\label{toricAreCurvilinear}
%\end{theorem}

Theorem \ref{main1toric} follows now from Proposition \ref{toricInductionStep}. For a complete and toric sequence $\pi$, we must show that there is a nonempty cone $\s_\pi$ in the fan $F$ satisfying 
$$\s_\pi \subset \bigcap_{\pi' \in \calp^T_k}H_{\pi,\pi'}^-.$$
The argument goes by induction on $k$. The statement is trivial for $k=1$ since $\pi = (1)$ is the only toric sequence of one partition. The induction hypothesis is now that
$$\bigcap_{(\pi'_1,\dots,\pi'_{k-1})\in \calp^T_{k-1}} H^-_{(\pi_1,\dots,\pi_{k-1}),(\pi'_1,\dots,\pi'_{k-1})} \cap (\RR_{\geq 0}^k)^* \neq \emptyset.$$
Since $\pi$ is complete, it follows from Proposition \ref{toricInductionStep} that also 
$$\bigcap_{\pi' \in \calp^T_k}H_{\pi,\pi'}^- \cap (\RR_{\geq 0}^k)^* \neq \emptyset,$$
yielding in particular the existence of such cone $\s_\pi$ in $F$ as we wanted.

%There is at a \emph{nonempty} cone $\s_\pi$ in the fan $F$ corresponding to a chart $\calT(\s_\pi) \subset \calT(F)$ on which the pullback of $b_\pi$ along $\calT(F) \to X_k$ becomes the generator of the pullback of the monomial ideal generated by all $b_{\pi'}$. 

\subsection{Socle extensions and the descent theorem}\label{subsec:trivialextension}
%In this section we define a special type of extensions of an algebra structure $\xi$ with generators in $\CC^n$, resulting in an algebra with generators in $\CC^N$ for some $N>n$. We call these socle extensions, because we literally adjoin socle basis elements to the algebra $\xi$ in the new $N-n$ coordinate directions. 

%This is merely a matter adjoining basis elements to $\xi$ with trivial multiplicative structure, that is, every element multiplies to $0$ with such adjoined basis element.

%The ultimate goal of this section is to obtain the result 
%\begin{theorem} If $\xi' \in \CHilb_0^{k+m}(\CC^{n+m})$ is a trivial extension of $\xi \in \Hilb_0^k(\CC^n)$ for some $m> 0$ then $\xi \in \CHilb_0^k(\CC^n)$.
%\end{theorem}

%This will follow from the more general result Proposition $\ref{deformTrivialExtension}$.

%$k$-Alg be the category of associative, commutative $k$-algebras with unit. 
Let $\Alg_k$ denote the set of unital, commutative, associative algebras over $\CC$ of dimension $k$. Recall that the socle of $\xi \in \Alg_k$ is defined as
$$\Soc(\xi) = \{x \in \xi: a\cdot x = 0 \text{ for all } a \in \xi\}.$$ 
which is the same as the annihilator sub-algebra of $\xi$ (not the annihilator as a module).

%whose functor on the category $k$-Alg is 
%\begin{equation}
%	\Alg_n(A) = \Set{\xi \in \Hom_A(A^n \otimes A^n,A^n) \given \xi \text{ is a unital, commutative and associative $A$-algebra on } A^n}.
%\end{equation}
%So the $k$-rational points $\xi \in \Alg_n$ are unital, commutative and associative algebra structure on $k^n$. 
Choosing generators $E_1,\dots,E_k$ for $\xi \in \Alg_k$, the algebra structure $\xi$ is determined by its structure constants $c_{ij}^l$ in the equations
$$E_i \cdot_\xi E_j = \sum_l c_{ij}^k E_l.$$
%We will also refer to $\xi$ as
%to $k^n$ with the algebra structure of $\xi$, and in this context we may write 
Then $\xi = \CC[E_1,\dots,E_k]/I_\xi$ for the ideal of relations $I_\xi$. 
\begin{definition}\label{def:trivialextension}  We say that $\xi'\in \Alg_{k+m}$ is an ($m$-times) socle extension of $\xi \in \Alg_k$ if $\xi'=\langle E_1,\dots,E_k,F_1,\dots,F_m \rangle$ is generated by $k+m$ generators with relations  
\begin{equation}\label{socleext}
 E_i \cdot_{\xi'} E_j = E_i \cdot_{\xi} E_j \in \xi=\langle E_1,\dots,E_k \rangle, \quad E_i\cdot_{\xi'}F_j = 0 , \quad F_i\cdot_{\xi'}F_j = 0. 
 \end{equation}
\end{definition}
In other words \eqref{socleext} can be rewritten as
\begin{enumerate}
    \item The subalgebra generated by $E_1,\ldots ,E_k$ is isomorphic to $\xi$ and
    \item $F_1,\dots,F_m \in \Soc(\xi')$.
\end{enumerate}    
If $\xi \in \Alg_k(\CC^n)$ is presented as $\xi =\CC[x_1,\ldots, x_n]/I_\xi \in \Hilb^k(\CC^n)$, then there is a canonical presentation for its $m$-times socle extension $\xi' \in \Alg_{k+m}$ in $\Hilb^{k+m}(\CC^{n+m})$ by choosing the $m$ extension directions as new coordinate directions.
\begin{definition}\label{def:socle} Let $\CC^n=\Span_\CC(e_1,\ldots, e_n) \subset \CC^{n+m}=\Span_\CC(e_1,\ldots, e_n,f_1,\ldots, f_m)$. We say that $\xi'=\CC[E_1,\ldots, E_k,F_1,\ldots, F_m]/I_{\xi'} \in \Hilb^{k+m}(\CC^{k+m})$ is the socle extension of $\xi=\CC[E_1,\ldots, E_k]/I_\xi \in \Hilb^k(\CC^n)$ if
$E_1,\ldots, E_k \in \CC^n$ and $F_i=f_i$ for $1\le i \le m$.  
\end{definition}

%(when a trivial  extension, and in particular a positive integer $m$, is given, $k^n$ will always refer to this subspace in $k^{n+m}$) is $\xi$ together with $F_1,\dots,F_m \in \Soc(\xi')$. 
 
\begin{figure}[ht!]
\centering
\includegraphics[width=50mm]{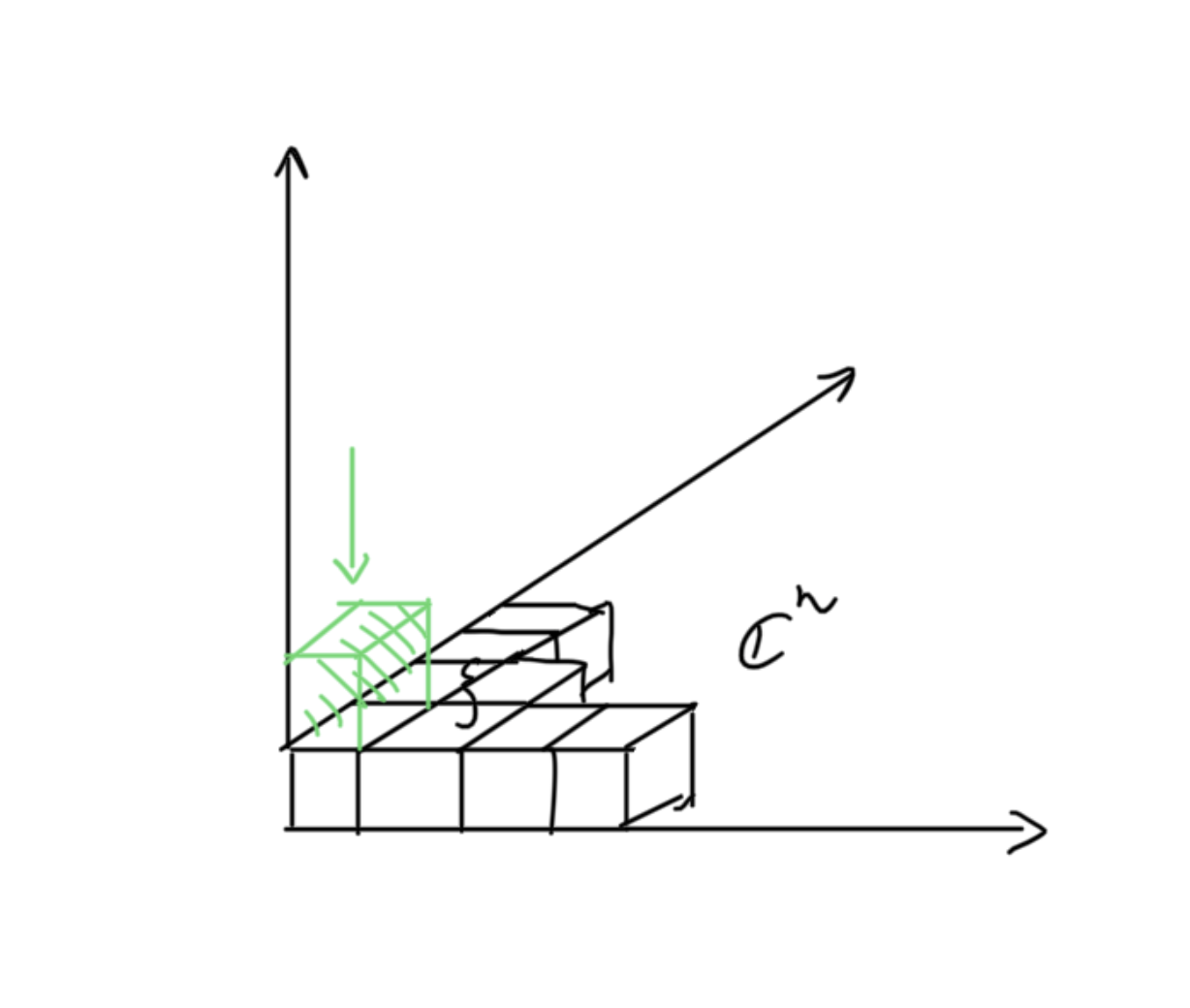}
\caption{The socle extension $\xi'=[\xi \wedge e_3]\in \Hilb^{9}_0(\CC^3)$ of $\xi \in \Hilb^{8}_0(\CC^2)$ in the new vertical $e_3$ coordinate direction} 
\label{figure:socle}
\end{figure}
According to Remark \ref{remark:embed}, $\Hilb^{k+1}_0(\CC^n)\subset \grass_{k}(\symdot)\subset \PP(\wedge^{k}\symdot)$, and the socle extension of $\xi \in \PP(\wedge^{k}\symdot)$ is $[\xi \wedge f_1 \wedge \ldots \wedge f_m] \in \PP(\wedge^{k+m}(\sym^{\le k+m}\CC^{n+m}))$.
Figure \ref{figure:socle} illustrates the $m=1$ situation: geometrically, $\xi'$ is the algebra arising when a $k+1$th point collides into $\xi$ from a new coordinate direction. It is clear that a socle extension of a curvilinear algebra is curvilinear. The following key theorem asserts that the converse is valid as well.  
\begin{theorem}[Descent theorem]\label{thm:descent}
	Let $k \le n$ and let $\xi' \in \Hilb^{k+m}_0(\CC^{n+m})$ be a socle extension of $\xi \in \Hilb^k_0(\CC^n)$. If $\xi' \in \CHilb^{k+m}_0(\CC^{n+m})$ then $\xi \in \CHilb^k_0(\CC^n)$.
\label{curvTrivialExtension}
\end{theorem}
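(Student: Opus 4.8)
The plan is to deduce the theorem from Proposition \ref{deformTrivialExtension} by taking the degenerating algebra $\eta$ there to be the curvilinear algebra itself. First I would translate the hypothesis $\xi' \in \CHilb^{k+m}_0(\CC^{n+m})$ into a statement about abstract algebras: the curvilinear component is, by definition, the closure of the curvilinear locus, every length-$(k+m)$ member of which has coordinate ring isomorphic to the single algebra $\eta := \CC[t]/t^{k+m} \in \Alg_{k+m}$; passing to underlying algebras therefore shows that $\xi'$ is a degeneration of $\eta$, i.e. $\eta > \xi'$.

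Next I would feed $\eta = \CC[t]/t^{k+m}$ into Proposition \ref{deformTrivialExtension}. Since $\xi'$ is a socle extension of $\xi$, the proposition produces a $k$-dimensional algebra $\z$ with $\z > \xi$, built from $\eta$ by the construction in its proof, which removes the $m$ adjoined socle directions one at a time. The decisive point is that for the \emph{curvilinear} $\eta$ this output is again curvilinear, i.e. $\z \cong \CC[t]/t^k$ (in particular $\z$ has embedding dimension $1$): the socle of $\CC[t]/t^j$ is the single line $\CC\,t^{j-1}$, and $\CC[t]/t^j$ modulo it is $\CC[t]/t^{j-1}$, so iterating $m$ times lands on $\CC[t]/t^k$. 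This is exactly where curvilinearity of $\eta$ is used and it is essential --- an arbitrary $k$-dimensional subalgebra of $\CC[t]/t^{k+m}$ can be far from curvilinear (e.g. the subalgebra $\CC\langle 1,t^2,t^3,t^4\rangle\subset\CC[t]/t^5$ has embedding dimension $2$), and nothing would then bound its embedding dimension by $n$.

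It remains to transport $\z > \xi$ into $\CC^n$. Unravelled, $\z \cong \CC[t]/t^k > \xi$ says $\xi$ is the special fibre of a flat family $\{A_t\}$ of $k$-dimensional local algebras with $A_t \cong \CC[t]/t^k$ for $t \ne 0$; the nonspecial fibres all have embedding dimension $1$, and the special fibre $\xi$ has embedding dimension $\le n$ because $\xi \in \Hilb^k(\CC^n)$. I would then take a minimal generating set of the maximal ideal of $\xi$ --- at most $n$ elements, padded to exactly $n$ --- lift it algebraically to $n$-tuples in the maximal ideals of the nearby $A_t$, and observe that the resulting homomorphisms $\CC[x_1,\dots,x_n] \to A_t$ are surjective for $t$ near $0$, surjectivity being an open condition. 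This exhibits an algebraic family of length-$k$ subschemes $\xi_t \subset \CC^n$ supported at the origin with $\xi_0 = \xi$ and $\xi_t$ curvilinear for $t \ne 0$, whence $\xi \in \overline{\mathrm{Curv}^k_0(\CC^n)} = \CHilb^k_0(\CC^n)$.

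The main obstacle I anticipate is the middle step (realizing $\z$ as the curvilinear algebra $\CC[t]/t^k$): one must inspect the proof of Proposition \ref{deformTrivialExtension} in the case $\eta = \CC[t]/t^{k+m}$ carefully enough to see that the $k$-dimensional algebra it returns is forced to be $\CC[t]/t^k$ --- equivalently, that "deleting a socle direction" from $\CC[t]/t^j$ can only yield $\CC[t]/t^{j-1}$ --- rather than some $k$-dimensional sub-object whose embedding dimension is not controlled by $n$. Granting $\z \cong \CC[t]/t^k$, Steps 1 and 3 are routine.
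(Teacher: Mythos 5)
Your proposal follows the paper's own route: both arguments reduce the theorem to Proposition \ref{deformTrivialExtension} applied with $\eta=\CC[t]/t^{k+m}$, and both must then identify the resulting $k$-dimensional algebra with $\CC[t]/t^k$. The difference is in how that identification is justified. The paper's proof simply asserts that $\CC[t]/t^{k+m}$ contains exactly one subalgebra of length $k$, namely $\CC[t]/t^k$; as your example $\CC\langle 1,t^2,t^3,t^4\rangle\subset\CC[t]/t^5$ shows, this is false if ``subalgebra'' is read literally (and for $k>m+1$ no subalgebra of $\CC[t]/t^{k+m}$ is isomorphic to $\CC[t]/t^k$ at all, since a generator would have to lie in $\mathfrak{m}^2$). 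The assertion is correct for quotient algebras: the only ideals of $\CC[t]/t^{k+m}$ are the powers $(t^j)$, so its only $k$-dimensional quotient is $\CC[t]/t^k$ --- and a quotient is exactly what the proof of Proposition \ref{deformTrivialExtension} produces, since it removes the adjoined socle directions by quotienting, one line at a time. Your quotient-by-socle-lines argument is therefore the correct reading (and repair of the wording) of the paper's one-line proof rather than a genuinely different proof; the price, which you flag yourself, is that you must invoke the construction inside the proof of Proposition \ref{deformTrivialExtension} (or restate that proposition so that $\z$ is the iterated socle-quotient of $\eta$) rather than its bare statement, exactly the same tension that is present in the paper. Your final step --- transporting the abstract degeneration $\CC[t]/t^k>\xi$ back into $\Hilb^k(\CC^n)$ using $\dim\mathfrak{m}_\xi/\mathfrak{m}_\xi^2\le n$, openness of surjectivity, and flatness of the constant-length family --- makes explicit the equivalence between $\xi\in\CHilb^k(\CC^n)$ and $\CC[t]/t^k>\xi$ that the paper uses implicitly in both directions; this is routine but worth recording.
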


\begin{proof}
We prove the theorem for $m=1$ and apply induction. Let $\xi' \in \Hilb^{k+1}_0(\CC^{n+1})$ be a socle extension of $\xi \in \Hilb^k_0(\CC^n)$. We pick basis elements $e_1,\ldots, e_{n},f$ on $\CC^{n+1}$ according to Definition \ref{def:socle}, that is 
\begin{equation}\label{eq:xi} 
	\xi \in \Span(e_1,\ldots, e_{n}),  \text{ and } \xi'/\xi= \Span(f)
\end{equation}
Since $\xi' \in \CHilb^{k+1}_0(\CC^{n+1})$, Theorem \ref{bszmodel} says that 
\[\xi' \in \overline{\mathrm{im}(\phi^\grass)}=\overline{\left\{[v_1 \wedge (v_2+v_1^2) \wedge \ldots \wedge (v_k+\Sigma_{\bi \in \Pi_k}v_\bi)]: v_i \in \CC^{n+1} \right\}} \subset \PP(\wedge^k \sym^{\le k}(\CC^{n+1}))\]
and \eqref{eq:xi} means that $\xi'$ has the form 
\begin{equation*} 
\xi'=[\xi \wedge f] \text{ where } \xi \in \wedge^{k-1}\sym^{\le k-1} \CC^n \text{ and } \CC^n=\mathrm{Span}(e_1,\ldots, e_n).
\end{equation*}
More precisely, there is a $k-1$-dimensional subspace $V_{k-1}\subset \CC^n$ such that 
\begin{equation}\label{xiwedgeek} 
\xi'=[\xi \wedge f] \text{ where } \xi \in \wedge^{k-1}\sym^{\le k-1} V_{k-1} 
\end{equation}
Hence there is a $2\le l \le k$ such that $\xi'$ sits in the boundary subset $\mathcal{B}_l$ defined as 
 \[\mathcal{B}_l=\overline{\left\{[v_1 \wedge (v_2+v_1^2) \wedge \ldots \wedge (v_{l-1}+\Sigma_{\bi \in \Pi_{l-1}}v_\bi) \wedge f \wedge (v_{l+1}+\Sigma_{\bi \in \Pi_{l+1}}v_\bi) \wedge \ldots \wedge (v_k+\Sigma_{\bi \in \Pi_k}v_\bi)]\right\}}\]
 where $v_i \in \CC^n=\langle e_1,\ldots, e_{n} \rangle$ for $i \neq l$. 
 
Next we show that $\mathcal{B}_l \subset \mathcal{B}_k$ for all $2\le l \le k$.  We recall from Definition \ref{def:widetilde} the partial blow-up of the curvilinear Hilbert scheme, which fibers over $\flag_{1,\ldots, k}(\CC^{n+1})$: we let $B_{k,n+1} \subset \GL(n+1)$ denote the parabolic subgroup which preserves the reference flag
\[\mathbf{f}=(\langle e_1 \rangle \subset \langle e_1,e_2 \rangle \subset  \ldots  \subset \langle e_1,\ldots e_{k-1} \rangle \subset \langle e_1,\ldots e_{k-1},f \rangle \subset  \CC^{n+1}) \in \flag_{1,\ldots, k}(\CC^{n+1})\] 
and  
\[\widehat{\CHilb}^{k+1}_0(\CC^{n+1})=\GL(n+1) \times_{B_{k,n+1}} \overline{B_{k,n+1}\cdot p_{k,n+1}}.\] 
is the fiberwise compactification in $\grass_k(\sym^{\le k}\CC^{n+1})$, which is a partial resolution of $\CHilb^{k+1}(\CC^{n+1})$. We study the following extension of Diagram \eqref{diagram:res}:
\[\xymatrix{\widehat{\CHilb}^{k+1}_0(\CC^{n+1}) \ar@{->>}[r]^-\rho \ar[d]^{\pi^\Hilb} \ar@/^3pc/[dd]^\pi   &  \CHilb^{k+1}_0(\CC^{n+1})=\overline{\GL(n+1)\cdot p_{k,n+1}}\\
\flag_{1,\ldots, k}(\CC^{n+1}) \ar[d]^{\pi^\flag} & \\ \flag_{k-1, k}(\CC^{n+1}) & }\]
Here the projection 
\[\pi^\flag: \flag_{1,\ldots, k}(\CC^{n+1}) \to \flag_{k-1, k}(\CC^{n+1}), \ \  (V_1 \subset \ldots \subset V_k \subset \CC^{n+1}) \mapsto (V_{k-1} \subset V_k \subset \CC^{n+1})\]
forgets the first $k-2$ subspaces in the flag and $\pi=\pi^\flag \circ \pi^\Hilb$ holds. 

Next, we assign to the flags $\bV=(V_1 \subset \ldots \subset V_k \subset \CC^{n+1}) \in \flag_{1,\ldots, k}(\CC^n)$ and $\bar{\bV}=(V_{k-1} \subset V_k \subset \CC^{n+1})\in \flag_{k-1,k}(\CC^{n+1})$ the subsets 
\begin{multline*}
\sym^\bV(\CC^{n+1})=V_1 \wedge (V_2 \oplus \sym^2(V_1)) \wedge \ldots \wedge (V_k \oplus \sym^2(V_{k-1}) \oplus \ldots \oplus \sym^k(V_1)))= \\
=\wedge_{i=1}^k \oplus_{j=1}^i \sym^j(V_{i+1-j}) \subset \wedge^k(\sym^{\le k}\CC^{n+1})
\end{multline*}
and 
\begin{multline*} 
\sym^{\bar{\bV}}(\CC^{n+1})=V_{k-1} \wedge (V_{k-1} \oplus \sym^2(V_{k-1})) \wedge \ldots \wedge (V_{k} \oplus \sym^2(V_{k-1}) \oplus \ldots \oplus \sym^k(V_{k-1})))= \\
\wedge_{i=1}^{k-1} \sym^{\le i}V_{k-1} \wedge (V_k+\oplus_{j=2}^k\sym^{\le j}V_{k-1}) \subset \wedge^k(\sym^{\le k}\CC^{n+1})
\end{multline*}
Note that $\sym^\bV(\CC^{n+1}) \subset \sym^{\bar{\bV}}(\CC^{n+1})$. For a point 
\[\zeta=[\zeta_1 \wedge \zeta_2 \wedge \ldots \wedge \zeta_k]\in \CHilb^{k+1}_0(\CC^{n+1}) \subset \PP(\wedge^k(\sym^{\le k}\CC^{n+1}))\]
its preimage $\rho^{-1}(\zeta) \subset \widehat{\CHilb}^{k+1}_0(\CC^{n+1})$ might intersect several fibers of $\pi^\Hilb$ and $\pi$. By Theorem \ref{bszmodel} these can be characterised as follows: 
\begin{equation}\label{characterisation1}
\rho^{-1}(\zeta)\cap (\pi^\Hilb)^{-1}(\bV) \neq \emptyset \text{ if and only if } \zeta \in \PP(\sym^\bV(\CC^{n+1}))
\end{equation}
and 
\begin{equation}\label{characterisation2}
\rho^{-1}(\zeta) \cap \pi^{-1}(\bar{\bV}) \neq \emptyset  \text{ if and only if } \zeta \in \PP(\sym^{\bar{\bV}}(\CC^{n+1}))
\end{equation}
%We define $\bar{\xi'} \in \widehat{\CHilb}^{k+1}_0(\CC^{n+1})$ as
Let $\mathbf{f}$ stand for the reference flag 
\[\bar{\mathbf{f}}:=\pi^\flag(\mathbf{f})=(\langle e_1,\ldots, e_{k-1} \rangle \subset   \langle e_1,\ldots e_{k-1},f \rangle \subset  \CC^n) \in \flag_{k-1, k}(\CC^{n+1}).\]
In \eqref{xiwedgeek} we can assume w.l.o.g that $V_{k-1}=\mathrm{Span}(e_1,\ldots, e_{k-1})$ and hence 
\[\xi' \in \PP(\sym^{\bar{\mathbf{f}}}(\CC^{n+1})).\]
Then by \eqref{characterisation2} $\rho^{-1}(\xi') \cap \pi^{-1}(\bar{\mathbf{f}})$ is nonempty. Let $\hat{\xi'}$ be a point in this intersection. This means that 
\[\hat{\xi'}=[v_1 \wedge (v_2+v_1^2) \wedge \ldots \wedge (v_k+\Sigma_{\bi \in \Pi_k}v_\bi)]\]
with some $v_1,\ldots, v_{k-1} \in \langle e_1,\ldots, e_{k-1} \rangle,  v_k \in \langle e_1,\ldots, e_{k-1},f \rangle$.
But then, since $\pi^\flag$ is surjective, there is a full flag   
\begin{multline}
\mathbf{f}_{\xi'}=(\langle w_1 \rangle  \subset \langle w_1,w_2 \rangle \subset  \ldots  \subset \langle w_1,\ldots w_{k-1} \rangle = 
\langle e_1,\ldots, e_{k-1} \rangle \subset \\ \subset \langle w_1,\ldots, w_{k-1},f \rangle \subset \CC^{n+1}) \in (\pi^\flag)^{-1}(\bar{\mathbf{f}})
\end{multline}
defined by $w_1,\ldots, w_{k-1} \in \mathrm{Span}(e_1,\ldots, e_{k-1})$. Let $B_{\bw,f} \subset \GL(n+1)$ denote the stabiliser of $\mathbf{f}_{\xi'}$. Then 
\begin{equation}\label{xiprime}
\hat{\xi'} \in (\pi^\Hilb)^{-1}(\mathbf{f}_{\xi'})=\overline{B_{\bw,f} \cdot [w_1 \wedge (w_2+w_1^2) \wedge \ldots \wedge (w_{k-1}+\Sigma_{\bi \in \Pi_{k-1}}w_\bi) \wedge (f+\Sigma_{\bi \in \Pi_k} w_{\bi}]}.
\end{equation}
%The condition that $\xi'$ is a trivial extension in direction $e_{k}$ simply means that if $\calo_{\xi}\simeq \CC[x_1,\ldots, x_{k-1}]/(I_\xi)$ then 
%\[\calo_{\xi'}\simeq \CC[x_1,\ldots, x_k]/(I_\xi,x_k^2,x_kx_{k-1},\ldots, x_kx_1).\]
%and hence 
%\[\Span(e_1,e_k)\subset \xi' \in \grass_k(\sym^{\le k}\CC^{n+1})\]
Hence 
\[\xi'=[\xi \wedge f]=\rho(\hat{\xi'})\in \overline{B_{\bw,f} \cdot [w_1 \wedge (w_2+w_1^2) \wedge \ldots \wedge (w_{k-1}+\Sigma_{\bi \in \Pi_{k-1}}w_\bi) \wedge (f+\Sigma_{\bi \in \Pi_k} w_{\bi}]},\]
and since $\Span(w_1,\ldots, w_{k-1})=\mathrm{Span}(e_1,\ldots, e_{k-1})$, this can only happen if 
\begin{equation}\label{eq:final}
\xi' \in \overline{B_{\bw,f} \cdot [w_1 \wedge (w_2+w_1^2) \wedge \ldots \wedge (w_{k-1}+\Sigma_{\bi \in \Pi_{k-1}}w_\bi) \wedge f]}.
\end{equation}
Let 
\[\bff_\xi=(\langle w_1 \rangle  \subset \langle w_1,w_2 \rangle \subset  \ldots  \subset \langle w_1,\ldots w_{k-1} \rangle \subset \CC^n)\] 
be the truncated flag after dropping $f$, and let $B_\bw \subset \GL(n)$ denote its stabiliser. By \eqref{xiwedgeek} and \eqref{eq:final}
\[\xi \in \overline{B_{\bw} \cdot [w_1 \wedge (w_2+w_1^2) \wedge \ldots \wedge (w_{k-1}+\Sigma_{\bi \in \Pi_{k-1}}w_\bi)]} \subset \CHilb^k_0(\CC^n).\]
\end{proof}

%\begin{proposition}
%	Let $\xi' \in \Alg_{k+m}$ be a socle extension of $\xi \in \Alg_k$. If $\eta > \xi'$ for some $\eta \in \Alg_{k+m}$, then there exists a subalgebra $\z \in \Alg_k$ of $\eta$ satisfying $\z > \xi$.
%\label{deformTrivialExtension}
%\end{proposition}
	
\begin{remark}
	In the above proof it is neccessary that $\xi$ is a subalgebra of $\xi'$. In particular, it is not enough that $F$ is a socle element of $\xi'$ as the following example shows. By \cite{Cartwright2009} there exists an algebra $A \notin \CHilb^8(\AAA^4)$ with Hilbert function $H_A = (1,4,3)$ (in fact, $A$ can be chosen not to be even smoothable).
Pick $d$ maximal so that there exists an algebra $A$ with Hilbert function $H_A = (1,4,d)$ such that $A \notin \CHilb^{d+5}(\AAA^4)$. If $d=10$ (which is the maximal possible choice) we have $A \simeq C[x_1,x_2,x_3,x_4]/m^3 \in \CHilb^{15}(\AAA^4)$ for $m=(x_1,x_2,x_3,x_4)$ the maximal ideal, hence it follows that $3 \leq d < 10$.

Let now $A \notin \CHilb^{d+5}(\AAA^4)$ be such algebra with $H_A = (1,4,d)$ for this maximal $d$. Then $A$ may be written as $A = B / (s)$ for some some $B$ with $H_B=(1,4,d+1)$ and some socle element $s \in \Soc(B)$ so that by the choice of $d$ we have $B \in \CHilb^{d+6}(\AAA^4)$.
\end{remark}

\subsection{T-extensions of monomial ideals}\label{toricExtensionSection}

Let $\mon \in \Hilb^{k+1}_0(\CC^n)$ be a monomial ideal. In this section we construct a monomial ideal $\mon^+ \in \Hilb^{K+1}_0(\CC^K)$ with some $K\ge k$, such that (i) $\mon^+$ is $T$-monomial and (ii) $A^+=\CC[x_1,\ldots, x_K]/\mon^+$ is a socle extension of an algebra $A$ which is isomorphic to $\CC[x_1,\ldots, x_n]/\mon$.

Let $\pi^{\mon}=\{\pi_1^\mon, \ldots, \pi_k^\mon\}$ be the complete set of partitions and $Y^\mon$ the Young diagram formed by the boxes $\{\boxx(\pi_1),\ldots, \boxx(\pi_k)\}$ as in \S \ref{subsec:partitions}. We define the reverse lexicographic order on the lattice $\ZZ^n$ as follows:
\[(i_1,\ldots, i_n) <_{\RL} (i_1',\ldots, i_n') \iff  i_d < i_d' \text{ for } d:=\text{max} \{j : i_j \neq i_j'\}.\]
\begin{definition} Let $\pi^\mon_\RL=(\pi^\mon_1 <_\RL \ldots <_\RL \pi^\mon_k)$ denote the sequence of partitions in $\pi^\mon$ in increasing order w.r.t. the reverse lexicographic order. We also call this the $\RL$-order of boxes of $Y^\mon$, or an $\RL$-sequence. 
\end{definition}

\begin{remark}
\begin{enumerate}
   \item   The $\RL$-sequence $\pi^\mon_\RL$ defined above is complete, but not necessarily admissible over $\ff$. Indeed in the example in Figure \ref{figure:notadm} the $\RL$-order of boxes is 
   \[\pi_{\RL}^\mon=([1],[1^2],[2],[2^2],[2^3])\]
   which is not admissible as $s(2^3)=6>5$.
\begin{figure}
\begin{center}
\begin{ytableau}
2^3    \\
2^2    \\
2   \\
 & 1& 1^2 
\end{ytableau}
\end{center}
\caption{Non-admissible $\RL$-order: $([1],[11],[2],[22],[222])$}
\label{figure:notadm}
\end{figure}

\item The RL-sequence might not be admissible, but there are several ways to construct an admissible order of the boxes in $Y^\mon$. Such admissible orders can be constructed from the nilpotent filtrations appearing in Kazarian's construction of the non-associative Hilbert scheme \cite{kazarian3}. 
\end{enumerate}
\end{remark}

\begin{figure}
\begin{center}
\begin{ytableau}
*(red) 2^4     \\
*(red) 2^3 & *(red) 12^3   \\
*(red) 2^2 & *(red) 12^2     \\
*(red) 2& *(red) 12& *(red) 1^22  \\
*(red)  & *(red) 1& *(red) 1^2& *(red) 1^3 

\end{ytableau} \hspace{3cm} 
\begin{ytableau}
*(red) 4^4 & *(blue) 17 & *(blue) 18 & *(blue) 19 \\
*(red) 4^3 & *(red) 14^3 & *(blue) 14 & *(blue) 15  \\
*(red) 4^2 & *(red) 14^2& *(blue) 10 & *(blue) 11  \\
*(red) 4 & *(red) 14& *(red) 1^24& *(blue) 7  \\
*(red)  & *(red) 1& *(red) 1^2& *(red) 1^3
\end{ytableau}
\end{center}
\caption{$\RL$ sequence and the corresponding toric extension}
\label{figure:example}
\end{figure}

Next, we assign to $\pi^\mon_\RL$ a toric sequence $\pi^\mon_+$ over an extended flag $\bff^+$ defined along the construction. 
%=\Span(e_1)\subset \ldots \subset \Span(e_1,\ldots, f) \subset \ldots \subset \Span(e_1, \ldots, e_K) \subset \Span(e_1,\ldots, e_N)=\CC^N\]
%for some $K \geq k,N \geq n$, containing $\pi^\mon$ as a subsequence in the sense that 
%the algebra $[e_{\pi^+}]$ contains a subalgebra isomorphic to $[e_{\pi^\mon}] \simeq \CC[\bx]/\mon$.
We explain this purely combinatorial construction through an example first. Let 
\[\mon=(x_1^4,x_1^3x_2,x_1^2x_2^2,x_1x_2^4,x_2^5) \in \Hilb^{12}_0(\CC^2)\]
be the red monomial ideal on the left hand side of Figure $\ref{figure:example}$, corresponding to the complete set 
\[\pi^\mon=\left\{[1],[2],[1^2],[12],[2^2],[1^3],[1^22],[12^2],[2^3],[12^3],[2^4]\right\}.\] 
The $\RL$-order of partitions in $\pi^\mon$ is the following: we list the bottom row from left to right, followed by the second row from left to right, etc. This gives
\[\pi^\mon_\RL=([1],[1^2,[1^3],[2],[12],[1^22],[2^2],[12^2],[2^3],[12^3],[2^4]).\]
We construct the associated toric sequence by inserting into $\pi^\mon_\RL$ the blue boxes, which complement the original red partition to the embedding $r_1 \times c_1$ rectangle where $r_1$ is the length of the bottom row and $c_1$ is the length of the first column in $Y^\mon$. The rule is the following. We index the boxes in the embedding rectangle from $0$ to $r_1c_1-1=24$ following the reverse lexicographic order (i.e starting from bottom to up and left to right). The first box in the second row (from below) now gets the partition equal to its index, i.e $\pi^+_{r_1+1}=[r_1+1]=[4]$ instead of $[2]$, and we use the substitution $2 \to r_1+1=4$ in the rest of the red $Y^\mon$. The partitions in the blue boxes are simply their indices. Hence the toric extension in this example is 
\begin{multline*}
   \pi^+=([1],[1^2],[1^3],[4],[14],[1^24],[7],[4^2],[14^2],[10],[11],\\
   [4^3],[14^3],[14],[15],[4^4],[17],[18],[19]) 
\end{multline*}
This is indeed a toric sequence over the flag $\bff^+=(e_1,e_2,\ldots, e_{19})$, because $s(\bi_j)=j$ for $1\le j \le 19$ and it corresponds to the  monomial ideal 
\begin{align*}
\mon^+ := \mon^{\pi^+}=\big(
&x_1^4,x_1^3x_4,x_1^2x_4^2,x_1x_4^4,x_4^5,x_2,x_3,x_5,x_6,x_8,x_9,x_{12},x_{13},x_{16}, \\
&(x_7,x_{10},x_{11},x_{14},x_{15},x_{17},x_{18},x_{19})(x_1,x_4,x_7,x_{10},x_{11},x_{14},x_{15},x_{17},x_{18},x_{19}) 
\big)
\end{align*}
sitting in $\Hilb^{20}_0(\CC^{19})$ (the last generator is the product of two ideals, i.e generated by pairwise product of generators of the factors). The red boxes give the complete set 
\[\pi^{\underline{\mon}}=\left\{[1],[1^2],[1^3],[14],[1^24],[4^2],[14^2],[4^3],[14^3],[4^4]\right\})\]
corresponding to the monomial ideal $\underline{\mon}\in \Hilb^{12}_0(\Span_\CC(e_1,e_4))=\Hilb^{12}_0(\CC^2)\subset \Hilb^{12}_0(\CC^{19})$. Then 
\begin{enumerate}
\item $\CC[x_1,\ldots x_{19}]/\underline{\mon}\simeq \CC[x_1,\ldots, x_{19}]/\mon$
\item $A^+=\CC[x_1,\ldots, x_{19}]/\mon^+$ is the iterated socle extension of $A=\CC[x_1,\ldots, x_{19}]/\underline{\mon}$ in the directions spanned by the blue boxes.
\end{enumerate}

The general construction follows the same lines. Let $\pi^\mon_\RL=(\pi^\mon_1,\ldots, \pi^\mon_k)$ be the RL-sequence corresponding to $\mon\in \Hilb^k(\CC^n)$. Let $\mathfrak{m}/(\mon+\mathfrak{m}^2)=\Span(e_1,\ldots, e_d)$, that is, $\mon$ corresponds to an $d$-dimensional Young diagram in the direction $e_1,\ldots, e_d$. Then the partitions $[1],\ldots, [d]$ are elements of the complete set $\pi^\mon$. Let $1\le \beta_i \le k$ be the position index of $[i]$ in the sequence $\pi^\mon_\RL$, that is
\[\pi_{\beta_i}=[i] \text{ for } 1\le i \le d.\] 
For $1\le j \le k$ let $\tau_j$ be the partition which we obtain by modifying the $j$th partition in the $\RL$ sequence $\pi^\mon_{\RL}$ as follows: we replace all occurrences of $[i]$ with $[\beta_i]$ for all $1\le i \le d$.

Next, as in the example above, we form the smallest $d$-dimensional cube
\[C^\mon=\left\{(a_1,\ldots, a_d) : 0\le a_i \le r_i \text{ for all } 1\le i \le d\right\}\]
which contains $Y^\mon$. Let $K=|C^\mon|=r_1\ldots r_d$ denote the number of boxes in $C^\mon$.% \subset C^\mon$ and (ii) $d+|C^\mon|-|Y^\mon|>n$ holds.
The reverse lexicographic order 
\[C^\mon_\RL=(\nu_1,\ldots, \nu_{|C^\mon|})\]
of the boxes of $C^\mon$ then defines the toric sequence $\pi^+$ associated to $\pi^\mon$ as follows.  
For $1\le t \le K-1$ we define
\[\pi_t^+ = \begin{cases} [t] & \text{ if } \nu_t \in C^\mon \setminus Y^\mon\\
\tau_t & \text{ if } \nu_t \in  Y^\mon. 
\end{cases}\]
Then by construction  
\begin{enumerate}
\item $\pi^+$ is a toric sequence with respect to the extended flag $\ff^+=(e_1,\ldots, e_{K-1})$, and $\mon^+=\mon^{\pi^+}\in \Hilb^{K}(\CC^{K-1})$. %Here $|C^\mon|-|Y^\mon|>n$, so the embedding dimension strictly increases. 
\item Let $\underline{\mon} \subset \CC[x_1,\ldots, x_{K-1}]$ be the monomial ideal corresponding to the complete set $\{\tau_t^+:\nu_t \in Y^\mon\}$. Then $\CC[x_1,\ldots, x_{K-1}]/\underline{\mon} \simeq \CC[x_1,\ldots, x_n]/\mon$. 
\item $A^+=\CC[x_1,\ldots, x_{K-1}]/\mon^+$ is the socle extension of $A=\CC[x_1,\ldots x_{K-1}]/\underline{\mon}$ in the coordinate directions $\{t: \nu_t \in C^\mon \setminus Y^\mon\}$.
\end{enumerate}

%At last we note that in the sense of definition \ref{def:trivialextension}, $[e_{\pi^+}] \simeq \CC[x_1,\dots,x_N]/\mon^+$ is a socle extension of the subalgebra $[e_\pi] \simeq \CC[x_1,\dots,x_n]/\mon$.

\subsection{Proof of Theorem \ref{main1}}\label{subsec:proof}
Let $\mon \subset \CC[x_1,\ldots, x_n]$ be a monomial ideal and \[A=\CC[x_1,\ldots, x_n]/\mon \in \Hilb^{k+1}_0(\CC^n)\]
the corresponding algebra. Following \S \ref{toricExtensionSection}, we construct the $T$-monomial ideal $\mon^+ \subset \CC[x_1,\ldots, x_{K-1}]$ with respect to some extended flag $\ff^+=(e_1,\ldots, e_{K-1})$, which by Theorem \ref{main1toric} sits in the toric Hilbert scheme:  
\[A^+=\CC[x_1,\ldots, x_{K-1}]/\mon^+ \in \THilb^{K}_{\ff^+}(\CC^{K-1}).\]
where $K=|C^\mon|$.
$A^+$ is the (iterated) socle extension of an algebra isomorphic by $A$:
\[A^+=\epsilon^m(\underline{A}) \text{ with } \underline{A}=\CC[x_1,\ldots, x_{K-1}]/\underline{\mon}\simeq \CC[x_1,\ldots, x_n]/\mon=A\]
where $m=|C^\mon|-|Y^\mon|$. 
If $k\le n$, then by Theorem \ref{thm:descent} $A\simeq \underline{A} \in \CHilb^{k+1}_0(\CC^n)$. The following lemma shows that we can assume that $k\le n$ holds.
\begin{lemma}\label{lemma:dim} Let $V \subset W$ complex vector spaces and $\Hilb^{k}(V) \subset \Hilb^k(W)$ the corresponding Hilbert scheme of points. If $\xi \in \Hilb^{k+1}_0(V) \cap \CHilb^{k+1}_0(W)$ then $\xi \in \CHilb^{k+1}_0(V)$.  
\end{lemma}
\begin{proof} By Theorem \ref{bszmodel} 
\[\xi = \lim_{m\to \infty} \phi^\grass(v_1^m,\ldots, v_k^m)\]
for a regular sequence $(v_1^m,\ldots, v_k^m) \in J_k^\reg(1,W)$, that is, $v_i^m \in W$ and $v_1^m\neq 0$ for $1\le i\le k$. Fix an inner product on $W$ and let $\pi:W \to V$ denote the orthogonal projection. Since $\xi \in \Hilb^{k+1}_0(V)$, we have 
\[\xi=\pi(\xi)=\lim_{m\to \infty} \phi^\grass(\pi(v_1^m),\ldots, \pi(v_k^m))=[\pi(v_1^m) \wedge \ldots \wedge \Sigma_{i_1+\ldots i_s=k}\pi(v_{i_1}^m)\ldots \pi(v_{i_k}^m)]\]
and hence $\pi(v_1^m)\neq 0$ for $m\gg 0$, and by Theorem \ref{bszmodel} $\xi \in \CHilb^{k+1}_0(V)$. 
\end{proof}
We apply Lemma \ref{lemma:dim} with $V=\CC^n \subset W=\CC^N$ for $N\ge k$, to conclude that we can assume that $k$ does not exceed the dimension $n$ in the argument above. This completes the proof of Theorem \ref{main1}.
\section{Final remarks}\label{sec:finalremarks}

\subsection{On the characteristic of $\bk$} Although we set our algebraically closed field to $\bk=\CC$ in this paper, all arguments go through to fields with characteristic zero. For positive characteristic the picture is less shiny: one of the main issues is weather/how the test curve model behaves in characteristic $p$. In Theorem \ref{bszmodel} we identified the curvilinear Hilbert scheme as the Zariski closure of the image of the map $\phi^\grass$
\[\CHilb^{k+1}_0(\CC^n)=\overline{\mathrm{im}(\phi^\grass)} \subset \grass_k(\symdot).\]
which turns out to be the closure of the $\GL(n,\bk)$ orbit of
\[p_{k,n}=\phi(e_1,\ldots, e_k)=\mathrm{Span}_\CC (e_1,e_2+e_1^2,\ldots, \Sigma_{i_1+\ldots i_s=k}e_{i_1}\ldots e_{i_s})\] 
The point $p_{k,n}$ is well-defined in any positive characteristics $p$ (due to the presence of the terms $e_1^j$ in the $j$th wedge factor), but its exact form depends of $p$ when $p$ is small compared to $k$. Indeed, for $p=2,k=3$
\[p_{3,n}=e_1 \wedge (e_2+e_1^2) \wedge (e_3+2e_1e_2+e_1^3)=e_1 \wedge (e_2+e_1^2) \wedge (e_3+e_1^3)\]
the $e_1e_2$ term is missing. As a result the monomial ideal $[e_1 \wedge e_2 \wedge e_1e_2]=(x^2,y^2)$ is not visible for our model, and it is not in the curvilinear component. But our argument tells us that the monomial ideals in $\CHilb^k_0(\AAA_{\bk}^n)$ are exactly those which one can see in the test curve model.  

\subsection{Quot schemes} With the shorthand notation $S=\CC[x_1,\ldots, x_n]$, the Hilbert scheme of points on $\CC^n$ parametrises $k$ dimensional quotients of $S$. The Quot scheme is the higher rank version of this: it parametrises $k$-dimensional quotients of the free $S$-module of rank $r$:
\[\Quot_r^k(\CC^n)=\{J \subset S^{\oplus r}: \dim(S^{\oplus r}/J)=k\}\]
Classifying these modules to any significant extent is highly challenging, and the study of components, singularities, and the deformation theory of quot schemes present equally formidable obstacles, see \cite{joachimquot, joachimsurvey}.
$\Quot_r^k(\CC^n)$ is naturally endowed with a $T^r \times \GL(n)$-action induced from the $\GL(n)$ action on $\CC^n$ and the rescaling action on the $r$ components. The fixed points for the action of the torus $T^r \times T^n$ correspond to monomial quotients of the form $S/\mon_1 \oplus \ldots \oplus S/\mon_r$ where $\mon_1,\ldots, \mon_r$ are monomial ideals of $S$. The curvilinear component is the closure of the curvilinear locus:
\[\CQuot^k_r(\CC^n)=\overline{\{J \subset S^{\oplus r}: S^{\oplus r}/J \simeq \CC[t]/t^k\}}.\]
\begin{remark} Let $\pi_i:\Quot_r^k(\CC^n) \to \cup_{m=1}^k\Hilb^m(\CC^n)$ denote the projection to the $i$th factor for $i=1,\ldots r$. Quotients of Morin algebras are Morin, hence if $J\subset S^{\oplus r}$ is curvilinear, that is, $S^{\oplus r}/J \simeq \CC[t]/t^k$ then the quotient ideal $\pi_i(J) \subset S$ is curvilinear, that is, $S/\pi_i(J)\simeq \CC[t]/t^{k_i(J)}$ for some $k_i(J)\le k$. The curvilinear Quot scheme has the following characterisation:
\[S^{\oplus r}/J \simeq \CC[t]/t^k \Leftrightarrow k_i(J)=k \text{ for at least one } 1\le i \le r.\]
\end{remark}
%This characterisation can be  Theorem \ref{embedgrass} extends naturally to Quot schemes as follows. 
%The map
%\[\phi: \jetreg {1}{rn} \rightarrow \grass_k(J_k(rn,1)^*)\]
%defined as  
%\[\gamma=(v_1  \mapsto (\mathcal{S}^{k,1}_\g)^\perp\]
%is $\diff_k(1)$-invariant and induces an injective map on the $\diff_k(1)$-orbits into the Grassmannian 
%\[\phi^\grass: \jetreg 1n /\diff_k(1) \hookrightarrow \grass_k(J_k(n,1)^*)\]
\begin{theorem}
All torus fixed points of $\Quot_r^k(\CC^n)$ sit on the curvilinear component $\CQuot^k_r(\CC^n)$.
\end{theorem}
\begin{proof}
Torus fixed modules $\xi \in \Quot_r^k(\CC^n)$ are the monomial modules, which are direct sums of the form $S/\mon_1 \oplus \ldots \oplus S/\mon_r$, where $\mon_1,\ldots, \mon_r \subset S$ are monomial ideals. Applying Theorem \ref{main1} to the direct summands it follows that if $\xi$ is monomial, then 
\[\xi \in \overline{\{\xi_1 \oplus \ldots \oplus \xi_r: \xi_i \simeq \CC[t]/t^{k_i}\}}\] 
is a limit of curvilinear modules. So it is enough to show that a module whose components are curvilinear is a limit of curvilinear modules. To see this, by reparametrisation of the factors it is enough to prove that $\CC[x]/(x^{k_1}) \oplus \ldots \oplus \CC[x]/(x^{k_r})$ is a limit of curvilinear modules, where $k_1\ge k_2 \ge. \ldots \ge k_r$. By Theorem \ref{main1} the algebra $\CC[x,y]/(x^{k_1}, x^{k_2}y, \ldots ,x^{k_r}y^{r-1},y^r)$ is a limit of curvilinear ones. 
But $\CC[x]/(x^{k_1}) \oplus \ldots \oplus \CC[x]/(x^{k_r})$ is a limit of algebras isomorphic to $\CC[x,y]/(x^{k_1}, x^{k_2}y, \ldots ,x^{k_r}y^{r-1},y^r)$, where we make multiplication by $y$ degenerate to the zero action. 
\end{proof}

\subsection{Geometric subsets of Hilbert scheme of points}
Let $A_1,\ldots, A_s$ be finite dimensional quotient algebras of $\CC[x_1, \ldots, x_n]$ of dimension $\dim_\CC(A_i)=k_i$ with $k=k_1+\ldots +k_s$. The geometric subset
\[\Hilb^{A_1,\ldots, A_s}(\CC^n)=\overline{\{\xi=\xi_1 \sqcup \ldots \sqcup \xi_s: \calo_{\xi_i} \simeq A_i\}}\subset \Hilb^k(\CC^n)\]
is the closure of the $(A_1,\ldots, A_s)$-locus formed by sub-schemes supported on $s$ points with prescribed local algebra at each point. These geometric subsets play central role in several enumerative geometry problems and mathematical physics, see \cite{berczitau3}. When $A_i=\CC[x_1,\ldots,x_n]/\mon_i$ is monomial algebra for $1\le i \le s$, \cite{berczitau3} defines the sum $A=A_1+\ldots +A_s$ which is also monomial, and \cite{berczitau3} defines the curvilinear component of the geometric subset as 
\[\CHilb^{A_1,\ldots, A_s}(\CC^n)=\Hilb^{A_1+\ldots +A_s}_0(\CC^n),\]
which corresponds to the punctual component where the $s$ points of support collide at the origin along a smooth curve on $\CC^n$. In a forthcoming paper, using the methodology of the present paper,  we will tackle the following
\begin{conjecture}
All monomial ideals which sit in the geometric subset $\Hilb^{A_1,\ldots, A_s}(\CC^n)$ also sit in its curvilinear component $\Hilb^{A_1+\ldots +A_s}_0(\CC^n)$, and it is possible to characterise these monomial ideals using a test map model.
\end{conjecture}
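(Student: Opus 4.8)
We sketch the strategy we intend to follow, which adapts the three-step proof of Theorem~\ref{main1} to geometric subsets, the one genuinely new ingredient being a \emph{test map model} for $\Hilb^{A_1+\dots+A_s}(\CC^n)$. The first task is to parametrise this scheme --- the closure of the locus of subschemes obtained by letting $s$ fat points with local algebras $A_1,\dots,A_s$ collide along a smooth curve-jet --- generalising the test curve model of Theorem~\ref{bszmodel}. The natural candidate takes a regular jet $\gamma\in J^\reg_k(1,n)$, with $k=\sum_i\dim A_i$, together with the combinatorial datum recording the order in which the $s$ fat points are stacked along $\gamma$, applies the test curve map $\phi_k$ to land in a large Grassmannian, and then imposes the local algebra structure $A_i$ on the $i$-th block; the curvilinear Hilbert scheme of Theorem~\ref{bszmodel} is the special case $A_1=\dots=A_s=\CC$. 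One then proves the analogue of Theorem~\ref{embedgrass}: on the locus where the linear part has one-dimensional kernel the fibres of this map are exactly the reparametrisation orbits, so that the Zariski closure of the image is $\Hilb^{A_1+\dots+A_s}(\CC^n)$ and its torus fixed points are indexed by ``complete'' box configurations adapted to the decomposition. That identification is precisely the descriptive half of the conjecture.

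Granting the model, the argument of \S\ref{subsec:step1}--\S\ref{toricExtensionSection} transfers. First, over a fixed base flag the fibre sits in the coordinate subspace spanned by the admissible box-sequences dictated by $A_1+\dots+A_s$, its torus fixed points are the complete ones (the analogues of Lemmas~\ref{completeboxes} and~\ref{lemma:crucial}), and the fan/toric-resolution argument of Proposition~\ref{toricInductionStep} shows that completeness makes the defining system of linear inequalities feasible, so every complete configuration is a limit of the toric part of the model. Second, given a torus fixed point $\mon$ of $\Hilb^{A_1,\dots,A_s}(\CC^n)$ one builds, branch by branch and exactly as in \S\ref{toricExtensionSection}, a socle extension $\mon^{+}$ whose box configuration is complete and toric over an enlarged flag with respect to a decomposition $A_1^{+}+\dots+A_s^{+}$, each $A_i^{+}$ a socle extension of $A_i$; by the first step $\mon^{+}\in\Hilb^{A_1^{+}+\dots+A_s^{+}}(\CC^{N})$. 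Third, one invokes a suitably adapted descent: Proposition~\ref{deformTrivialExtension} applied to the socle extension $\mon^{+}$ of $\mon$ produces a length-$k$ subalgebra of $A_1^{+}+\dots+A_s^{+}$ degenerating to $\mon$, and identifying it with $A_1+\dots+A_s$ --- using that forming the sum commutes with socle extension --- gives $\mon\in\Hilb^{A_1+\dots+A_s}(\CC^n)$, which is the first assertion of the conjecture.

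These last two steps are essentially formal once the model is available, so the hard part will be constructing the test map model and proving that it computes the right thing. Concretely, we expect the difficulties to be: choosing the combinatorial stacking data so that the resulting map is well defined and $\GL(n)$-equivariant; establishing the one-dimensional-kernel version of Theorem~\ref{embedgrass} when a configuration of fat points, rather than a single reduced point, travels along the curve; checking that forming the sum $A_1+\dots+A_s$ is compatible with socle extensions, which is what makes the descent step close up; and --- the crux --- proving that the closure of the image of the model is \emph{exactly} $\Hilb^{A_1+\dots+A_s}(\CC^n)$, neither larger nor smaller, that is, that the monomial ideals visible in the model are precisely the torus fixed points of the geometric subset. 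This last point is the descriptive half of the conjecture and is the part that does not reduce to bookkeeping.
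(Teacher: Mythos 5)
You are attempting to prove a statement that the paper itself does not prove: this is stated as a \emph{conjecture} in \S\ref{sec:finalremarks}, explicitly deferred to a forthcoming paper, so there is no proof in the text to compare your proposal against. Your roadmap does follow the methodology the authors announce (toric/complete combinatorics over a flag, socle extension, descent), but as it stands it is a research plan rather than a proof, and you say so yourself: the test map model for $\Hilb^{A_1+\ldots+A_s}(\CC^n)$, the analogue of Theorem~\ref{embedgrass}, and above all the identification of the closure of the image with the geometric subset component are exactly the missing content, not bookkeeping.

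Beyond the openly acknowledged gaps, one step you treat as formal would actually fail without a new idea: the descent. In the paper's Theorem~\ref{thm:descent} the argument closes because $\eta\simeq\CC[t]/t^{k+m}$ contains a \emph{unique} subalgebra of length $k$, so the subalgebra $\z$ produced by Proposition~\ref{deformTrivialExtension} is forced to be $\CC[t]/t^{k}$ and hence $\z>\xi$ lands you in $\CHilb^k(\CC^n)$. For the geometric subset the generic algebra $\eta$ in $\Hilb^{A_1^{+}+\ldots+A_s^{+}}$ is no longer $\CC[t]/t^{K}$; it has many length-$k$ subalgebras, and Proposition~\ref{deformTrivialExtension} only gives you \emph{some} subalgebra $\z$ with $\z>\mon$, with no control that $\z$ is isomorphic to (or degenerates within) $A_1+\ldots+A_s$, nor that the resulting family stays inside $\Hilb^{A_1+\ldots+A_s}(\CC^n)$ rather than a larger geometric subset with the same colength. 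Your parenthetical ``using that forming the sum commutes with socle extension'' is precisely the unproved compatibility needed here, and it is a substantive claim about the behaviour of the sum operation of \cite{berczitau3} under deformation, not a formal consequence of the rank-one case. Until the model is constructed, its image closure shown to be exactly $\Hilb^{A_1+\ldots+A_s}(\CC^n)$, and a uniqueness-type statement substituted for the unique-subalgebra property of $\CC[t]/t^{K}$, the conjecture remains open.
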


\bibliographystyle{abbrv}
\bibliography{BercziSvendsen.bib}

\begin{thebibliography}{10}

\bibitem{AIM}
American {I}nstitute of {M}athematics {P}roblem {L}ist, \textit{Components of
  {H}ilbert schemes}.
\newblock Available at http://aimpl.org/hilbertschemes, 2010.

\bibitem{berczithom}
G.~B\'erczi.
\newblock Non-reductive geometric invariant theory and {T}hom polynomials.
\newblock arXiv:2012.06425.

\bibitem{berczitau2}
G.~B\'erczi.
\newblock Tautological integrals on {H}ilbert scheme of points {I}.
\newblock arXiv:2303.14807.

\bibitem{berczitau3}
G.~B\'erczi.
\newblock Tautological integrals on {H}ilbert scheme of points {II}:
  {G}eometric subsets.
\newblock arXiv:2303.14812.

\bibitem{berczitau4}
G.~B\'erczi.
\newblock Tautological integrals on {H}ilbert scheme of points {III}:
  Applications.
\newblock In preparation.

\bibitem{bercziG&T}
G.~B\'erczi.
\newblock Tautological integrals on curvilinear {H}ilbert schemes.
\newblock {\em Geom. Topol.}, 21(5):2897--2944, 2017.

\bibitem{bdhk}
G.~B\'erczi, B.~Doran, T.~Hawes, and F.~Kirwan.
\newblock Constructing quotients of algebraic varieties by linear algebraic
  group actions.
\newblock In {\em Handbook of Group Actions}, Advanced Lectures in Mathematics.
  Higher Education Press and International Press, 2018.

\bibitem{bdhk2}
G.~B\'erczi, B.~Doran, T.~Hawes, and F.~Kirwan.
\newblock Geometric invariant theory for graded unipotent groups and
  applications.
\newblock {\em Journal of Topology}, 11(3):826--855, 2018.

\bibitem{bkGGL}
G.~B\'erczi and F.~Kirwan.
\newblock Non-reductive geometric invariant theory and hyperbolicity.
\newblock {\em Inventiones Mathematicae,
  https://doi.org/10.1007/s00222-023-01219-z}.

\bibitem{bkcoh}
G.~B{\'e}rczi and F.~Kirwan.
\newblock Moment maps and cohomology of non-reductive quotients.
\newblock {\em Inventiones Mathematicae,
  https://doi.org/10.1007/s00222-023-01218-0}, 2023.

\bibitem{bsz}
G.~B{\'e}rczi and A.~{Sz}enes.
\newblock Thom polynomials of {M}orin singularities.
\newblock {\em Ann. of Math. (2)}, 175(2):567--629, 2012.

\bibitem{briancon}
J.~Briancon.
\newblock Description de ${H}ilb^n {C}\{x,y\}$.
\newblock {\em Inventiones {M}athematicae}, 41:45--90, 1977.

\bibitem{Cartwright2009}
D.~Cartwright, D.~Erman, M.~Velasco, and B.~Viray.
\newblock {Hilbert schemes of 8 points}.
\newblock {\em Algebra \& Number Theory}, 3(7):763 -- 795, 2009.

\bibitem{bertone}
F.~C. Cristina~Bertone and M.~Roggero.
\newblock Smoothable gorenstein points via marked schemes and double-generic
  initial ideals.
\newblock {\em Experimental Mathematics}, 31(1):120--137, 2022.

\bibitem{ekedahl}
T.~Ekedahl and R.~Skjelnes.
\newblock Recovering the good component of the {H}ilbert scheme.
\newblock {\em Annals of Mathematics}, 179(3):805--841, 2014.

\bibitem{rf}
L.~M. Feh{\'e}r and R.~Rim{\'a}nyi.
\newblock Thom series of contact singularities.
\newblock {\em Ann. of Math. (2)}, 176(3):1381--1426, 2012.

\bibitem{fogarty}
J.~Fogarty.
\newblock Algebraic families on an algebraic surface.
\newblock {\em Amer. J. Math}, 90:511--521, 1968.

\bibitem{gaffney}
T.~Gaffney.
\newblock The {T}hom polynomial of {$\overline{\sum ^{1111}}$}.
\newblock In {\em Singularities, {P}art 1 ({A}rcata, {C}alif., 1981)},
  volume~40 of {\em Proc. Sympos. Pure Math.}, pages 399--408. Amer. Math.
  Soc., Providence, R.I., 1983.

\bibitem{haiman}
M.~Haiman.
\newblock {$t,q$}-{C}atalan numbers and the {H}ilbert scheme.
\newblock {\em Discrete Math.}, 193(1-3):201--224, 1998.
\newblock selected papers in honor of Adriano Garsia (Taormina, 1994).

\bibitem{hartshorne}
R.~Hartshorne.
\newblock Connectedness of the {H}ilbert scheme.
\newblock {\em Publications {M}athematiques de l'{I}{H}{E}{S}}, 29:5--48, 1966.

\bibitem{horrocks}
G.~Horrocks.
\newblock Sheaves on projective space invariant under the unitriangular group.
\newblock {\em Inventiones {M}athematicae}, 10:108--118, 1970.

\bibitem{totaro1}
M.~Hoyois, J.~Jelisiejew, D.~Nardin, B.~Totaro, and M.~Yakerson.
\newblock The {H}ilbert scheme of infinite affine space and algebraic
  {K}-theory.
\newblock to appear in JEMS.

\bibitem{iarrobino2}
A.~Iarrobino.
\newblock Reducibility of the families of 0-dimensional schemes on a variety.
\newblock {\em Inventiones Mathematicae}, 15:72--77, 1971-72.

\bibitem{iarrobino}
A.~Iarrobino.
\newblock Associated graded algebra of a {G}orenstein {A}rtin algebra.
\newblock {\em Mem. Amer. Math. Soc.}, 10(188):viii+112, 1977.

\bibitem{joachimthesis}
J.~Jelisiejew.
\newblock Hilbert schemes of points and their applications.
\newblock PhD Thesis, University of Warsaw, 2017.

\bibitem{joachimsurvey}
J.~Jelisiejew.
\newblock Open problems in deformations of {A}rtinian algebras, {H}ilbert
  schemes and around.
\newblock arXiv:2307.08777.

\bibitem{joachiminventiones}
J.~Jelisiejew.
\newblock Pathologies on the {H}ilbert scheme of points.
\newblock {\em Invent. {M}ath.}, 220:581--610, 2020.

\bibitem{joachimquot}
J.~Jelisiejew and K.~Sivic.
\newblock Components and singularities of {Q}uot schemes and varieties of
  commuting matrices.
\newblock {\em Journal f\"ur die {R}eine und {A}ngewandte {M}athematik},
  2022(788):129--187, 2022.

\bibitem{kazarian3}
M.~Kazarian.
\newblock Non-associative {H}ilbert scheme and {T}hom polynomials.
\newblock arXiv:1712.09270.

\bibitem{ToroidalEmbeddings}
G.~R. Kempf, F.~F. Knudsen, D.~Mumford, and B.~Saint-Donat.
\newblock {\em Toroidal Embeddings {I}}.
\newblock Springer, 1973.

\bibitem{nakajima}
H.~Nakajima.
\newblock {\em Lectures on {H}ilbert schemes of points on surfaces}, volume~18
  of {\em University Lecture Series}.
\newblock American Mathematical Society, Providence, RI, 1999.

\bibitem{Ree95}
A.~Reeves.
\newblock The radius of the {H}ilbert scheme.
\newblock {\em J. Algebraic Geom}, 4(4):639--657.

\bibitem{satriano}
M.~Satriano and A.~Staal.
\newblock Small elementary components of {H}ilbert schemes of points.
\newblock arXiv:2112.01481.

\bibitem{vakil}
R.~Vakil.
\newblock Murphy's law in algebraic geometry:badly-behaved deformation spaces.
\newblock {\em Invent. Math.}, 164(3):569--590, 2006.

\end{thebibliography}

\end{document}